\newtheorem{theorem}{Theorem}[section]
\newtheorem{proposition}[theorem]{Proposition}
\newtheorem{lemma}[theorem]{Lemma}
\newtheorem{cor}[theorem]{Corollary}
\theoremstyle{definition}
\newtheorem{definition}[theorem]{Definition}
\newtheorem{rem}[theorem]{Remark}
\newtheorem{exam}[theorem]{Example}
\newcommand{\Gin}{\ensuremath{\mathrm{Gin}}}
\newcommand{\init}{\ensuremath{\mathrm{in}}}
\newcommand{\GL}{\ensuremath{GL_n (K)}}
\newcommand{\mideal}{\ensuremath{\mathfrak{m}}}
\newcommand{\dele}[1]{\ensuremath{\Delta^e({#1})}}
\newcommand{\dels}[1]{\ensuremath{\Delta^s({#1})}}
\newcommand{\del}[1]{\ensuremath{\Delta({#1})}}
\newcommand{\Shift}{\ensuremath{\mathrm{Shift}}}
\def\cocoa{{\hbox{\rm C\kern-.13em o\kern-.07em C\kern-.13em o\kern-.15em A}}}
\newcommand{\cyclic}{{\ensuremath{C(n,d)}}}
\newcommand{\lk}[1]{\ensuremath{\mathrm{lk}_\Gamma({#1})}}
\newcommand{\lks}[1]{\ensuremath{\mathrm{lk}_{\Shift_{ij}(\Gamma)}({#1})}}
\newcommand{\contra}[1]{\ensuremath{\mathcal{C}_\Gamma({#1})}}
\newcommand{\M}{ \ensuremath{\mathcal{M}}}
\newcommand{\sset}{\Lambda}
\begin{document}

\title
{
Algebraic shifting of
strongly edge decomposable spheres}
\author{Satoshi Murai}
\address{
Department of Mathematics\\
Graduate School of Science\\
Kyoto University\\
Sakyo-ku, Kyoto, 606-8502, Japan.
}
%\email{murai@math.kyoto-u.ac.jp}
%\thanks{The author is supported by JSPS Research Fellowships for Young Scientists}

\maketitle

\begin{abstract}
Recently, Nevo introduced the notion of strongly edge decomposable spheres.
In this paper, we characterize algebraic shifted complexes of those spheres.
Algebraically, this result yields the characterization of the generic initial ideal
of the Stanley--Reisner ideal of Gorenstein* complexes having the strong Lefschetz property in characteristic 0.
\end{abstract}

\section{Introduction}
Algebraic shifting, which was introduced by Kalai \cite{K1,Kdisc},
is a map which associates with each simplicial complex $\Gamma$
another simplicial complex $\del \Gamma$ 
of a special type.
%Two types of algebraic shifting were introduced by Kalai.
There are two main variants of algebraic shifting,
called \textit{exterior algebraic shifting} $\dele {\mbox{-}}$
and \textit{symmetric algebraic shifting} $\dels {\mbox{-}}$
%The first one is called \textit{exterior algebraic shifting} $\dele {\mbox{-}}$,
%and the second one is called \textit{symmetric algebraic shifting} $\dels {\mbox{-}}$
(see \S 4 for details).
On the relation between algebraic shifting and simplicial spheres,
Kalai and Sarkaria suggested the following attractive conjecture.
Let $\del {\mbox{-}}$ be either $\dele {\mbox{-}}$ or $\dels {\mbox{-}}$
and let $\cyclic$ be the boundary complex of a cyclic $d$-polytope with $n$ vertices.
They conjectured that if $\Gamma$ is a simplicial $(d-1)$-sphere with $n$ vertices then
\begin{eqnarray} \label{0-c}
\del \Gamma \subset \Delta^s \big( \cyclic \big).
\end{eqnarray}
An important fact on this conjecture is that if a simplicial sphere $\Gamma$ satisfies
(\ref{0-c}) for either exterior algebraic shifting or symmetric algebraic shifting then the face vector of $\Gamma$ satisfies the McMullen's $g$-condition
(see \cite{Kdisc}).
Thus if the conjecture is true (for either $\dele {\mbox{-}}$ or $\dels {\mbox{-}}$) then it yields the characterization of face vectors of simplicial spheres,
which is one of the major open problems in the study of face vectors of simplicial complexes. 
However, this conjecture is widely open and only some special cases were shown (\cite{Kdisc,Mdisc,Nthesis}).
In the present paper, we will show that this conjecture is true	 for strongly edge decomposable spheres, which were introduced by Nevo \cite{N2}.

Let $\Gamma$ be a simplicial complex on $[n]=\{1,2,\dots,n\}$.
Thus $\Gamma$ is a collection of subsets of $[n]$
satisfying that (i) $\{i\} \in \Gamma$ for all $i \in [n]$
and (ii) if $F \in \Gamma$ and $G \subset F$ then $G \in \Gamma$.
An element $F$ of $\Gamma$ is called a \textit{face} of $\Gamma$
and maximal faces of $\Gamma$ under inclusion are called \textit{facets} of $\Gamma$.
A simplicial complex is said to be \textit{pure}
if all its facets have the same cardinality.
Let $f_k (\Gamma)$ be the number of faces $F \in \Gamma$ with $|F|=k+1$,
where $|F|$ is the cardinality of $F$.
The \textit{dimension} of $\Gamma$ is 
$\dim \Gamma = \max \{ k : f_k(\Gamma) \ne 0\}$.
%the maximal integer $k$ such that
%$f_k (\Gamma) \ne 0$.
The vector $f(\Gamma) = (f_0(\Gamma),f_1(\Gamma), \dots, f_{d-1}(\Gamma))$ is called the \textit{$f$-vector} of $\Gamma$,
where $d= \dim \Gamma +1$.
The {\em $h$-vector} $h(\Gamma)=(h_0(\Gamma),h_1(\Gamma),\dots,h_d(\Gamma))$ of
$\Gamma$ is  defined by the relations
$$h_i (\Gamma) =\sum_{j=0}^i (-1)^{i-j} { d-j \choose d-i} f_{j-1}(\Gamma)
\ \mbox{ and }\ f_{i-1}(\Gamma) = \sum_{j=0}^i { d-j \choose d-i} h_j (\Gamma),$$
where we set $f_{-1}(\Gamma) =1$.
A simplicial complex $\Gamma$ on $[n]$ is said to be \textit{shifted}
if $F \in \Gamma$ and $i \in F$ imply $(F \setminus \{i\}) \cup \{j\}
\in \Gamma$ for all $i<j \leq n $.
Note that $\dele \Gamma$ and $\dels \Gamma$ are shifted complexes
with the same $f$-vector as $\Gamma$.

First, we define strongly edge decomposable complexes.
Let $\Gamma$ be a simplicial complex on $[n]$.
The \textit{link of $\Gamma$ with respect to $F \subset [n]$}
is the simplicial complex
$$
\lk F = \{ G \subset [n] \setminus F : G \cup F \in \Gamma\}.
$$
To simplify, we write $\lk v = \lk {\{v\}}$ and $\lk {ij}=\lk {\{i,j\}}$.
Let $1 \leq i < j \leq n$ be integers.
The \textit{contraction $\contra {ij}$ of $\Gamma$ with respect to $\{i,j\}$}
is the simplicial complex on $[n] \setminus \{i\}$ which is obtained
from $\Gamma$ by identifying the vertices $i$ and $j$,
in other words,
$$\contra {ij} =
\{ F \in \Gamma: i \not \in F\} \cup \{ (F \setminus \{i\}) \cup \{j\}:
i \in F \in \Gamma\}.
$$
We say that $\Gamma$ satisfies the \textit{Link condition
with respect to $\{i,j\}$} if 
%$\{i,j\} \in \Gamma$ and 
\begin{eqnarray*}
\label{1-1}
\lk i \cap \lk j  = \lk {ij}.
\end{eqnarray*}
Since $\lk i \cap \lk j$ is not empty, one has $\{i,j\} \in \Gamma$ if
$\Gamma$ satisfies the Link condition with respect to $\{i,j\}$.

\begin{definition}
\label{1no1}
The boundary complex of simplexes and $\{ \emptyset\}$ are \textit{strongly edge decomposable}
and, recursively, a pure simplicial complex $\Gamma$ is said to be
strongly edge decomposable if there exists $\{i,j\} \in \Gamma$ such that
$\Gamma$ satisfies the Link condition with respect to $\{i,j\}$
and both $\contra {ij}$ and $\lk {ij}$ are strongly edge decomposable.
\end{definition}

Definition \ref{1no1} is a natural extension of the definition of the strongly edge decomposable property
introduced by Nevo \cite[Definition 4.2]{N2}.
He assumed in addition that $\Gamma$ is a triangulated PL-manifold 
(see \cite{Hu}).
However, in our definition, strongly edge decomposable complexes
are not always PL-manifolds.
Here we give a few simple examples.

\begin{exam}
\label{exam12}
Let $\Gamma$ be the simplicial complex
generated by $\{1,2\},\{2,3\},\{3,4\}$ and $\{1,4\}$
(that is, $\Gamma$ is a cycle of length $4$).
Then $\Gamma$ satisfies the Link condition with respect to $\{1,2\}$.
Also, $\contra {\{1,2\}}$ is the boundary of the simplex $\{2,3,4\}$
and $\lk {\{1,2\}} =\{\emptyset\}$.
Hence $\Gamma$ is strongly edge decomposable.

Similarly, consider the simplicial complex $\Gamma'$
generated by $\{1,2\},\{2,3\},\{3,4\}$ and $\{2,4\}$.
Then $\Gamma'$ also satisfies the Link condition with respect to $\{1,2\}$
and we have $\mathcal{C}_{\Gamma'}(\{1,2\})=\contra {\{1,2\}}$ and
$\mathrm{lk}_{\Gamma'}(\{1,2\})=\lk {\{1,2\}}$.
Thus $\Gamma'$ is also strongly edge decomposable.
However $\Gamma'$ is not a manifold since the face $\{2\} \in \Gamma'$ is contained in $3$ different
facets.

On the other hand,
the simplicial complex $\Sigma =\Gamma \cup \{ \{1,3\}\}$
is not strongly edge decomposable since
$\Sigma$ does not satisfy the Link condition with respect to any $\{i,j\} \in \Sigma$.
\end{exam}

We will show the following.
%The main result of this paper is the following.

\begin{theorem}
\label{main1}
Let $\del {\mbox{-}}$ be either $\dele {\mbox{-}}$ or $\dels {\mbox{-}}$
and let $\Gamma$ be a $(d-1)$-dimensional strongly edge decomposable complex on $[n]$.
Then $\del \Gamma$ is pure,
$h_i( \Gamma)=h_{d-i}(\Gamma)$ for $d=0,1,\dots,d$
and 
$\del \Gamma \subset \dels \cyclic$.
\end{theorem}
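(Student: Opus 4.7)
The natural strategy is induction on the recursive construction given in Definition \ref{1no1}. The base cases---boundaries of simplices and $\{\emptyset\}$---are immediate: they are already shifted with palindromic $h$-vectors, and the boundary of the $(d-1)$-simplex on $[d]$ is manifestly contained in $\dels\cyclic$ for any admissible $n$. For the inductive step, suppose $\Gamma$ is $(d-1)$-dimensional and satisfies the Link condition with respect to an edge $\{i,j\}\in\Gamma$, and assume the theorem holds for $\contra{ij}$ and for $\lk{ij}$. Partitioning the faces of $\Gamma$ according to their intersection with $\{i,j\}$ and using $\lk i\cap \lk j=\lk{ij}$ to account for double-counting yields the face-number identity $f_k(\Gamma)=f_k(\contra{ij})+f_{k-1}(\lk{ij})$, which converts to the $h$-vector identity $h_k(\Gamma)=h_k(\contra{ij})+h_{k-1}(\lk{ij})$. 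Palindromicity of $h(\Gamma)$ then follows from the palindromicity of $h(\contra{ij})$ (a $(d-1)$-dimensional complex) and $h(\lk{ij})$ (a $(d-3)$-dimensional complex) supplied by the induction hypothesis.

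The technical heart of the proof must be a containment lemma of the following form: \emph{if $\Gamma$ satisfies the Link condition with respect to $\{i,j\}$, then}
\[
\del\Gamma \ \subset\ \widetilde{\del(\contra{ij})} \ \cup \ \bigl\{\, F\cup\{n\} : F\in \widetilde{\del(\lk{ij})}\,\bigr\},
\]
where the tildes denote re-indexing of the ambient vertex set into $[n]$ and the second piece is a cone with apex $n$ over a shift of $\lk{ij}$. Granting such a lemma, purity of $\del\Gamma$ is immediate, since both pieces are pure of dimension $d-1$ by induction. The containment $\del\Gamma\subset\dels\cyclic$ then follows by invoking the inductive hypotheses $\del(\contra{ij})\subset\dels{C(n-1,d)}$ and $\del(\lk{ij})\subset\dels{C(n-2,d-2)}$, together with the explicit description of $\dels\cyclic$ showing that both $\dels{C(n-1,d)}$ and the cone $\dels{C(n-2,d-2)}\ast\{n\}$ embed as subcomplexes.

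The main obstacle is proving the containment lemma. Algebraic shifting is defined via a generic change of coordinates applied to the Stanley--Reisner ideal, so vertex identification---a non-generic specialization $x_i\mapsto x_j$---does not interact transparently with it. The approach I would take is to work at the ideal level: the Link condition should translate into the short exact sequence of graded modules
\[
0 \longrightarrow (S/I_{\lk{ij}})(-1) \stackrel{\cdot (x_i-x_j)}{\longrightarrow} S/I_\Gamma \longrightarrow S/I_{\contra{ij}} \longrightarrow 0,
\]
with the Link condition precisely ensuring that multiplication by $x_i-x_j$ is injective modulo $I_\Gamma$. Passing to generic initial ideals with respect to the reverse lexicographic order should then yield the desired containment of shifted complexes for $\dels$, and the parallel Aramova--Herzog--Hibi theory in the exterior algebra handles the case of $\dele$. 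Verifying that these initial-ideal manipulations actually produce the combinatorial containment above---in particular, controlling the new generators arising from the quotient---is where the bulk of the real work must lie.
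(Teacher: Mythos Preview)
Your inductive strategy is sound, and the $h$-vector identity $h_k(\Gamma)=h_k(\contra{ij})+h_{k-1}(\lk{ij})$ is correct (though your $f$-vector identity is off: the second term should be $f_{k-1}(\{j\}*\lk{ij})$, as in the display before~(\ref{eichi})). The technical core, however, has genuine gaps.

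The short exact sequence you propose does not exist. Multiplication by $x_i-x_j$ does not give a well-defined map $(S/I_{\lk{ij}})(-1)\to S/I_\Gamma$: if $F\in\lk i\setminus\lk j$ (the Link condition pins down $\lk i\cap\lk j$, not the union), then $x_F\in I_{\lk{ij}}$ but $(x_i-x_j)x_F\notin I_\Gamma$, since $x_ix_F\notin I_\Gamma$ and $I_\Gamma$ is a monomial ideal. Nor is there a natural surjection $S/I_\Gamma\to S/I_{\contra{ij}}$: in the $4$-cycle with edge $\{1,2\}$ one has $x_2x_4\in I_\Gamma$ while $\{2,4\}\in\contra{12}$. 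And $x_i-x_j$ is \emph{not} regular on $S/I_\Gamma$: in the same example $(x_1-x_2)x_3x_4\in I_\Gamma$ but $x_3x_4\notin I_\Gamma$. Your containment lemma is also off by one cone point: a single cone $\{n\}*\del{\lk{ij}}$ has dimension $d-2$, not $d-1$, so your purity deduction fails; the correct decomposition (Lemma~\ref{2no1}(ii)) uses the double cone $\{i\}*\{j\}*\lk{ij}$. Even granting a corrected sequence, generic initial ideals do not respect exactness, so ``passing to $\Gin$'' would not yield the structural containment you want.

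The paper's route is different and does not pass any sequence through $\Gin$. The Link condition is shown equivalent to $I_\Gamma$ having no generator divisible by $x_ix_j$ (Lemma~\ref{2no1}(iii)); this is exactly what makes the \emph{specific} substitution $\varphi_{ij}\colon x_j\mapsto x_i+x_j$ satisfy $\init(\varphi_{ij}(I_\Gamma))=I_{\Shift_{ij}(\Gamma)}$ (Lemma~\ref{4no3}), where $\Shift_{ij}(\Gamma)=\contra{ij}\cup(\{i\}*\{j\}*\lk{ij})$. From here the two shiftings are handled separately. For $\dels{\mbox{-}}$ the paper proves the strong Lefschetz property directly by an Artinian-reduction argument on this decomposition (Proposition~\ref{4no5}), and the containment follows from Kalai's criterion (Lemma~\ref{5no70}). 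For $\dele{\mbox{-}}$, where no Lefschetz analogue is available, the paper constructs a nongeneric $\tilde\varphi$ for which $\Delta_{\tilde\varphi}(\Gamma)\subset\Delta(n,d)$ is verifiable from the inductive hypotheses (Proposition~\ref{3no5}), and then invokes the monotonicity principle Lemma~\ref{3no2}: if \emph{some} $\Delta_\varphi(\Gamma)$ lands in $\Delta(n,d)$ after further shifting, so does $\dele\Gamma$. Purity is established separately via Cohen--Macaulayness (Corollary~\ref{4no7} and Lemma~\ref{CohenMacaulay}), not from any containment lemma.
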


The relation (\ref{0-c}) is closely related to
the strong Lefschetz property of simplicial complexes (see \S 3 for the definition).
Indeed, it is known that
a $(d-1)$-dimensional simplicial complex $\Gamma$ satisfies the conditions of Theorem \ref{main1}
for symmetric algebraic shifting if and only if
$\Gamma$ has the strong Lefschetz property
in characteristic $0$.
The fact that strongly edge decomposable complexes have the strong Lefschetz property in characteristic $0$
was proved by Babson and Nevo \cite{BN}.
Thus, for symmetric algebraic shifting,
Theorem \ref{main1} follows from their result.
In this paper we prove Theorem \ref{main1}
for exterior algebraic shifting.
Unfortunately,
the method used in \cite{BN} is not applicable for exterior algebraic shifting
since the strong Lefschetz property is the condition for a quotient of a polynomial ring, however, for exterior algebraic shifting, we need to
consider exterior algebras and we do not have an analogue of
the strong Lefschetz property in exterior algebras.
To prove the result,
we use nongeneric algebraic shifting methods
developed in \cite{Mdisc}.
In particular, by applying this method,
we prove that strongly edge decomposable complexes have
the strong Lefschetz property in arbitrary characteristic.

It is known that every $2$-sphere is strongly edge decomposable,
however,
simplicial spheres are not always strongly edge decomposable
(see \cite[\S 7]{DEGN}).
In this paper,
we show that Kalai's squeezed spheres \cite{Ksq} are strongly edge decomposable
(Proposition \ref{5no4}).
This fact says that the class of strongly edge decomposable spheres is not small
since the number of combinatorial types of squeezed $(d-1)$-spheres with $n$ vertices is
larger than the number of combinatorial types of boundary complexes of simplicial $d$-polytopes with $n$ vertices
if $d \geq 5$ and $n \gg 0$.
Moreover, by using squeezed spheres, we show

\begin{theorem}
\label{main2}
Let $\del {\mbox{-}}$ be either $\dele {\mbox{-}}$ or $\dels {\mbox{-}}$
and let $\Sigma$ be a $(d-1)$-dimensional pure shifted simplicial complex
on $[n]$
satisfying $h_{i}(\Sigma)=h_{d-i}(\Sigma)$ for $i=0,1,\dots,d$
and $\Sigma \subset \dels \cyclic$.
Then there exists a squeezed sphere $\Gamma$ such that
$\del \Gamma = \Sigma$.
\end{theorem}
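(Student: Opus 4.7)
The plan is to invoke Kalai's parameterization of squeezed spheres by shifted families and to invert it. By Proposition \ref{5no4} every squeezed sphere is strongly edge decomposable, so Theorem \ref{main1} already tells us that the exterior (or symmetric) shift of a squeezed $(d-1)$-sphere is pure, shifted, has symmetric $h$-vector, and is contained in $\dels \cyclic$. Theorem \ref{main2} asserts the converse: these necessary conditions are also sufficient, and the proof will proceed by producing the defining shifted family of the desired squeezed sphere directly from $\Sigma$.

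The first step is to compute $\del \Gamma$ explicitly for a squeezed sphere $\Gamma = S(\sset)$ in terms of the shifted family $\sset$ of $d$-subsets of $[n]$ that defines it. The expected formula is that the facets of $\del S(\sset)$ are $\sset$ together with the canonical ``reflection'' $\sset^\vee$ of $\sset$ (the family that makes the resulting $h$-vector palindromic). I would prove this by induction on $|\sset|$: select a pair $\{i,j\}$ realising the strongly edge decomposable structure of $\Gamma$, apply the link/contraction behaviour of $\del$ developed earlier in the paper to reduce to smaller squeezed spheres, and match facet sets.

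The second step is the inversion. Let $\Sigma$ satisfy the hypotheses of the theorem. Shiftedness together with the palindromic relation $h_i(\Sigma) = h_{d-i}(\Sigma)$ forces the facets of $\Sigma$ to split canonically into a ``lower'' family and an ``upper'' family that are reflections of one another; take $\sset$ to be the lower family. The condition $\Sigma \subset \dels \cyclic$ translates, via the known combinatorial description of $\dels \cyclic$, into the Macaulay growth bound for the $h$-vector associated with $\sset$, so $\sset$ produces an $M$-sequence and Kalai's construction yields a squeezed sphere $\Gamma := S(\sset)$. Invoking the formula from the first step then gives $\del \Gamma = \sset \cup \sset^\vee = \Sigma$.

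The main obstacle is the combinatorial identity $\del S(\sset) = \sset \cup \sset^\vee$ in the first step; once it is in hand, the inversion is essentially bookkeeping with shifted order ideals. Establishing this identity requires combining Kalai's explicit shelling-type description of squeezed balls with the nongeneric shifting techniques and the strongly edge decomposable inductive structure exploited earlier in the paper, and this is where the bulk of the technical work is concentrated.
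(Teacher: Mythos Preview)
Your high-level strategy --- invert Kalai's parameterization of squeezed spheres and show that the given $\Sigma$ lies in the image --- is the right one, and the paper follows the same outline. But the technical route you propose differs from the paper's in a significant way, and the paper's route is considerably shorter.

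The paper never computes the facets of $\del{S_d(U)}$ explicitly in the course of the proof. Instead it works with the monomial invariant $U(\Gamma)=\{u\in\M_{[n-d-1]}: u\notin\Gin(I_\Gamma)\}$ and uses two facts: Kalai's Lemma~\ref{5noA}, which says that for $(d-1)$-dimensional complexes with the strong Lefschetz property $U(\Gamma)$ determines $\dels\Gamma$; and Lemma~\ref{5no9}(ii) (from prior work), which says $U(S_d(U))=U$ when $\max\{\deg u:u\in U\}\le \frac d2$. Since $\Sigma$ is pure, shifted, and contained in $\Delta(n,d)$, Lemmas~\ref{CohenMacaulay} and~\ref{5no70} give that $\Sigma$ has the strong Lefschetz property, so $U(\Sigma)$ is a legitimate input and $U(S_d(U(\Sigma)))=U(\Sigma)$; applying Lemma~\ref{5noA}(ii) finishes the symmetric case in one line. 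The exterior case is then handled separately by first proving $\dele{S_d(U)}=\dels{S_d(U)}$ (Proposition~\ref{5noB}); your proposal does not address this $\dele$ versus $\dels$ issue at all.

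Your plan to establish the facet identity $\del{S(\Lambda)}=\Lambda\cup\Lambda^\vee$ by induction along the strongly edge decomposable structure is plausible but has a real difficulty you have not resolved: the nongeneric shifting machinery of \S4 (Lemmas~\ref{3no2} and~\ref{3no3} and Proposition~\ref{3no5}) yields only \emph{containment} $\dele\Gamma\subset\Delta(n,d)$, because passing from $\Delta_{\tilde\varphi}(\Gamma)$ back to $\dele\Gamma$ via Lemma~\ref{3no2} loses the exact facet set. Turning this into an equality would require an additional counting or matching argument at each inductive step, which you have not supplied. The paper does confirm the explicit facet description you are aiming for, but only as a remark \emph{after} the proof, derived from the formulas (\ref{7-A}) and (\ref{7-B}) via $U(\Gamma)$ and $L(\Gamma)$ --- not via the SED induction. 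So your ``main obstacle'' is genuine, and the paper simply sidesteps it.
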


Theorems \ref{main1} and \ref{main2}
characterize algebraic shifted complexes of
strongly edge decomposable spheres.
Also, if $\Gamma$ is a simplicial $(d-1)$-sphere then
it is known that $\del \Gamma$ is pure and
$h_i(\Gamma)=h_{d-i}(\Gamma)$ for $i=0,1,\dots,d$.
Thus, in view of conjecture (\ref{0-c}), it is expected that the conditions in Theorem \ref{main1}
characterize algebraic shifted complexes of simplicial spheres.

To prove Theorems \ref{main1} and \ref{main2},
we first study the Link condition from an algebraic viewpoint.
It will be shown in \S 2 that the Link condition
has a nice relation to initial ideals as well as shift operators.
In particular, shift operators,
which were considered in extremal set theory,
play an important role in the study of the Link condition.
By using the above relation, we show that if a simplicial complex
$\Gamma$ satisfies the Link condition
and if its contraction and its link satisfy a nice algebraic property,
such as the Cohen--Macaulay property and the strong Lefschetz property,
then $\Gamma$ also satisfies the same property
(Propositions \ref{4no5} and \ref{3no5}).
These results and the proof of Theorems \ref{main1}
are given in \S 3 and \S 4.
Finally, in \S 5, we show that squeezed spheres are strongly edge decomposable and prove Theorem \ref{main2}.

\section{The Link condition and shift operators}

Let $\Gamma$ be a simplicial complex on $[n]$.
For given integers $1 \leq i <j \leq n$ and
for all $F \in \Gamma$,
%be integers.
%Write $\Shift_{ij}(\Gamma)=\{C_{ij}(F) : F \in \Gamma\}$
%for the collection of subsets of $[n]$ defined by
one defines
\begin{eqnarray*}
C_{ij}(F)=
\left \{
\begin{array}{ll}
(F \setminus \{i\}) \cup \{j\}, & \mbox{ if $i \in F$, $j \not \in F$
and  $(F \setminus \{i\}) \cup \{j\} \not \in \Gamma$,}\\
F, & \mbox{ otherwise.}\end{array}
\right.
\end{eqnarray*}
Let $\Shift_{ij}(\Gamma)=\{C_{ij}(F) : F \in \Gamma\}$.
It is easy to see that $\Shift_{ij}(\Gamma)$ is a simplicial complex
and $f(\Gamma)= f(\Shift_{ij}(\Gamma))$ (see e.g., \cite[\S 8]{H}).
The operation $\Gamma \to \Shift_{ij}(\Gamma)$ was introduced by
Er\"ods--Ko--Rado \cite{EKR},
and played an important role in the classical extremal combinatorics
of finite sets (see \cite{A}).
In this section,
we study the relation between the above shift operators and the Link condition.

Let $S=K[x_1,\dots,x_n]$ be a polynomial ring over a field $K$
with each $\deg (x_i)=1$.
The \textit{Stanley--Reisner ideal $I_\Gamma$} of a simplicial complex $\Gamma$ on $[n]$
is the ideal of $S$ generated by all squarefree monomials
$x_F = \prod_{i \in F} x_i \in S$ with $F \not \in \Gamma$.
For a monomial ideal $I$ of $S$,
we write $G(I)$ for the unique minimal set of monomial generators of $I$.
For a simplicial complex $\Gamma$ on the vertex set $V$
and for a simplicial complex $\Sigma$ on the vertex set $W$
with $V \cap W = \emptyset$,
we write
$$\Gamma*\Sigma = \{ F \cup G : F \in \Gamma \mbox{ and } G \in \Sigma \}$$
and
$$ V * \Sigma = \{ F \cup G : F \subset V \mbox{ and } G \in \Sigma\}.$$
The following characterization of the Link condition is crucial for the whole paper.

\begin{lemma}
\label{2no1}
Let $\Gamma$ be a simplicial complex on $[n]$
and let $1 \leq i<j \leq n$ be integers.
The following conditions are equivalent.
\begin{itemize}
\item[(i)] $\Gamma$ satisfies the Link condition with respect to $\{i,j\}$;
\item[(ii)] $\Shift_{ij}(\Gamma)= \contra {ij} \cup \{ \{i\} \cup F : F \in \{j\} * \lk {ij} \}$;
\item[(iii)] $I_\Gamma$ has no generators which are divisible by $x_ix_j$.
%one has $\{i,j\} \not \subset F$ for any $x_F \in G(I_\Gamma)$.
\end{itemize}
In particular, if $\Gamma$ satisfies the Link condition with respect to $\{i,j\}$ then $\Shift_{ij}(\Gamma)$ also
satisfies the Link condition with respect to $\{i,j\}$.
\end{lemma}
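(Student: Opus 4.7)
My plan is to establish (i) $\Leftrightarrow$ (iii) by direct inspection of minimal non-faces, derive (i) $\Rightarrow$ (ii) by a case analysis on containment of $i$ and $j$, obtain (ii) $\Rightarrow$ (i) by testing a single candidate witness, and finally read off the assertion about $\Shift_{ij}(\Gamma)$ from the explicit formula in (ii).

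For (i) $\Leftrightarrow$ (iii), I would use that the generators of $I_\Gamma$ correspond to the minimal non-faces of $\Gamma$: a generator divisible by $x_ix_j$ is $x_ix_jx_H$ for some $H \subset [n]\setminus\{i,j\}$ with $\{i,j\}\cup H$ a minimal non-face, and minimality forces $\{i\}\cup H, \{j\}\cup H \in \Gamma$, so $H \in (\lk i \cap \lk j)\setminus \lk{ij}$. Conversely, any $G$ witnessing a failure of the Link condition contains some $H \subset G$ making $\{i,j\}\cup H$ a minimal non-face, which yields a generator divisible by $x_ix_j$.

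For (i) $\Rightarrow$ (ii), I would partition $\Gamma$ into (a) faces with $i \notin F$, (b) faces with $\{i,j\} \subset F$, and (c) faces of the form $\{i\}\cup G$ with $j \notin G$. Classes (a) and (b) are $C_{ij}$-fixed by definition. On class (c), the Link condition implies that $C_{ij}$ fixes $\{i\}\cup G$ exactly when $G \in \lk{ij}$, and otherwise returns $\{j\}\cup G$, which already appears in $\contra{ij}$ and hence contributes nothing new. Collecting the contributions gives the claimed decomposition. The converse is essentially one line: if $G \in \lk i \cap \lk j$, then $\{i\}\cup G \in \Gamma$ is $C_{ij}$-fixed (because $\{j\}\cup G \in \Gamma$), so $\{i\}\cup G \in \Shift_{ij}(\Gamma)$; since it contains $i$, formula (ii) places it in the right-hand term, forcing $G \in \{j\}*\lk{ij}$, and as $j \notin G$ this gives $G \in \lk{ij}$.

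For the last assertion, formula (ii) shows that the faces of $\Shift_{ij}(\Gamma)$ containing $i$ are exactly $\{i\}\cup(\{j\}*\lk{ij})$, so $\lks{i} = \{j\}*\lk{ij}$ and $\lks{\{i,j\}} = \lk{ij}$; intersecting with $\lks{j}$ (whose elements cannot contain $j$) kills the $\{j\}\cup G$ summand, and the remaining inclusion $\lk{ij} \subset \lks{j}$ follows at once from the description of faces of $\contra{ij}$ containing $j$. I expect the main obstacle to be the bookkeeping in (i) $\Rightarrow$ (ii); the key observation is that the faces $\{j\}\cup G$ with $G \in \lk i \cap \lk j$ that $C_{ij}$ appears to produce are already present in $\contra{ij}$, so they do not appear as extra terms in the decomposition.
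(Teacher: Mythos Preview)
Your argument is correct. The treatment of (i) $\Leftrightarrow$ (iii) via minimal non-faces is essentially identical to the paper's. For (i) $\Leftrightarrow$ (ii) you do a direct three-way partition of $\Gamma$ according to membership of $i$ and $j$, whereas the paper first records two general identities valid for any $\Gamma$,
\[
\lks i = (\lk i \cap \lk j) \cup \{\{j\}\cup F : F\in \lk{ij}\}
\quad\text{and}\quad
\Shift_{ij}(\Gamma)=\contra{ij}\cup\{\{i\}\cup F: F\in \lks i\},
\]
and reads off both directions at once by specializing the first identity under the Link condition; this is the same case analysis packaged more compactly. The one place the paper takes a genuinely different shortcut is the final assertion: rather than verifying $\lks i \cap \lks j = \lks{ij}$ by hand as you do, the paper observes that $\Shift_{ij}$ is idempotent and that $\lks{ij}=\lk{ij}$, so formula (ii) applied to $\Shift_{ij}(\Gamma)$ coincides with formula (ii) applied to $\Gamma$, whence (ii) holds for $\Shift_{ij}(\Gamma)$ and therefore so does (i). Your direct verification is perfectly fine and arguably more transparent; the paper's idempotence trick is shorter once the equivalence is in hand.
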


\begin{proof}
It is clear that the contraction $\contra {ij}$ can be written in the form
\begin{eqnarray} \label{2-1}
\contra {ij} =
\{ F \in \Shift_{ij} (\Gamma) : i \not \in F \}.
\end{eqnarray}
Then
the second statement follows from (ii) since
$\Shift_{ij} ( \Shift_{ij} (\Gamma)) =\Shift_{ij} (\Gamma)$ and
$\lk {ij} = \lks {ij}$.
We will show the first statement.
\medskip

((i) $\Leftrightarrow$ (ii))
By the definition of $\Shift_{ij}$, for any $F \subset [n] \setminus \{i,j\}$, one has $\{i\} \cup F  \in \Shift_{ij}(\Gamma)$ if and only if
$\{i\} \cup F  \in \Gamma$ and $\{j\} \cup F  \in \Gamma$.
This fact says that
\begin{eqnarray}
\label{2-2}
\lks i = \big( \lk i \cap \lk j \big) \cup
\big\{ \{j \} \cup F : F \in \lk {ij} \big\}.
\end{eqnarray}
On the other hand (\ref{2-1}) says
\begin{eqnarray}
\label{2-3}
\Shift_{ij}(\Gamma) = \contra {ij} \cup
\big \{ \{ i \} \cup F : F \in \lks i \big\}.
\end{eqnarray}
Then the equivalence of (i) and (ii) follows from \medskip
(\ref{2-2}) and (\ref{2-3}).

((i) $\Rightarrow$ (iii))
Let $x_i x_j x_F \in I_\Gamma$ with $F \subset [n] \setminus \{i,j\}$.
Since $F \not \in \lk {ij} = \lk i \cap \lk j$,
we have $x_i x_F \in I_\Gamma$ or $x_j x_F \in I_\Gamma$. \medskip
Thus we have $x_ix_j x_F \not \in G(I_\Gamma)$.

((i) $\Leftarrow$ (iii))
The inclusion $\lk i \cap \lk j \supset \lk {ij}$ is obvious.
What we must prove is $\lk i \cap \lk j \subset \lk {ij}$.
Let $F \in \lk i \cap \lk j$.
%Clearly $F \subset [n] \setminus \{i,j\}$,
%$\{i\} \cup F \in \Gamma$ and $\{j \} \cup F \in \Gamma$.
Suppose $F \not\in \lk {ij}$.
Then $x_ix_jx_F \in I_\Gamma$
and there exists $x_G \in G(I_\Gamma)$ such that $G \subset \{i,j\} \cup F$.
Since $\{i\} \cup F \in \Gamma$ and $\{j \} \cup F \in \Gamma$,
we have $G \not \subset \{i\} \cup F$ and $G \not \subset \{j\} \cup F$.
Thus we have $\{i,j\} \subset G$, however,
this contradicts the assumption that $x_G \in G(I_\Gamma)$
is not divisible by $x_ix_j$.
Hence $F \in \lk {ij}$.
\end{proof}

For integers $1 \leq i < j \leq n$,
let $\varphi_{ij}$ be the graded $K$-algebra automorphism of $S$
induced %defined
by $\varphi_{ij}(x_k)=x_k$ for $k \ne j$ and $\varphi_{ij}(x_j)=x_i+x_j$.
We write $\init (I)$ for the initial ideal of 
a homogeneous ideal $I$ of $S$ w.r.t.\ the degree reverse lexicographic order induced by $x_1> \cdots >x_n$ (see \cite[\S 15]{E}).
Algebraically,
the benefit of Lemma \ref{2no1} (iii) can be explained by the following fact.

\begin{lemma} \label{4no3}
Let $\Gamma$ be a simplicial complex on $[n]$ and let $1 \leq i <j \leq n$ be integers.
If $I_\Gamma$ has no generators which are divisible by $x_ix_j$
then
$$\init \big(\varphi_{ij}(I_\Gamma) \big)=I_{\Shift_{ij}(\Gamma)}.$$
\end{lemma}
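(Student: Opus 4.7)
The plan is to deduce the desired equality of monomial ideals from a single inclusion by a Hilbert-function comparison, and to produce that inclusion by a case analysis based on Lemma \ref{2no1}(ii). Since $\varphi_{ij}$ is a graded automorphism of $S$, the ideals $I_\Gamma$ and $\varphi_{ij}(I_\Gamma)$ have the same Hilbert function, and passing to the degree reverse lexicographic initial ideal preserves it. The equality $f(\Gamma)=f(\Shift_{ij}(\Gamma))$ recorded earlier gives $I_\Gamma$ and $I_{\Shift_{ij}(\Gamma)}$ the same Hilbert function as well. Hence it is enough to prove the inclusion $I_{\Shift_{ij}(\Gamma)} \subseteq \init(\varphi_{ij}(I_\Gamma))$: a strict containment between graded ideals with identical Hilbert function is impossible.

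I would first compute the leading terms of the images $\varphi_{ij}(x_F)$ for $x_F \in G(I_\Gamma)$. Split these generators as $G(I_\Gamma) = A \sqcup B$ depending on whether $x_j$ divides the generator. For $x_F \in A$ one has $\varphi_{ij}(x_F) = x_F$. For $x_F \in B$ the hypothesis of the lemma forces $i \notin F$, and writing $F = \{j\}\cup G$ gives
\[
\varphi_{ij}(x_F) = (x_i + x_j)x_G = x_i x_G + x_j x_G.
\]
In degrevlex with $x_1 > \cdots > x_n$, the exponent vector of $x_i x_G$ minus that of $x_j x_G$ has $+1$ at position $i$ and $-1$ at the later position $j$, so its last nonzero entry is negative and $\init(\varphi_{ij}(x_F)) = x_i x_G$.

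To verify the inclusion, take a squarefree monomial $x_F \in I_{\Shift_{ij}(\Gamma)}$ and split into cases. If $j \notin F$ or if $\{i,j\}\subseteq F$, then Lemma \ref{2no1}(ii) shows $F \notin \Shift_{ij}(\Gamma)$ implies $F \notin \Gamma$, so some $x_G \in G(I_\Gamma)$ has $G \subseteq F$; whether $x_G$ lies in $A$ or in $B$, the leading term of $\varphi_{ij}(x_G)$ still divides $x_F$ (for $x_G = x_jx_{G'} \in B$ this uses $i \in F$, which holds in the relevant subcase, together with $G' \subseteq F \setminus \{j\}$), and so $x_F$ is a multiple of an element of $\init(\varphi_{ij}(I_\Gamma))$.

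The interesting case is $j \in F$ and $i \notin F$. There $F \notin \Shift_{ij}(\Gamma)$ is equivalent to $F \notin \Gamma$ together with $F' := (F \setminus \{j\}) \cup \{i\} \notin \Gamma$, so $x_F, x_{F'} \in I_\Gamma$ and hence $x_F - x_{F'} \in I_\Gamma$. A direct computation gives
\[
\varphi_{ij}(x_F - x_{F'}) = (x_i + x_j)x_{F \setminus \{j\}} - x_i x_{F \setminus \{j\}} = x_F,
\]
which places $x_F$ itself inside $\varphi_{ij}(I_\Gamma)$ and therefore inside $\init(\varphi_{ij}(I_\Gamma))$. I expect this step to be the main obstacle, because no leading term of any $\varphi_{ij}(x_G)$ with $x_G \in G(I_\Gamma)$ need divide $x_F$ here: an image from $B$ introduces the variable $x_i$, which $F$ does not contain. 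Combining the three cases yields $I_{\Shift_{ij}(\Gamma)} \subseteq \init(\varphi_{ij}(I_\Gamma))$, and the Hilbert-function comparison from the first paragraph upgrades this to equality.
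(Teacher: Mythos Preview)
Your overall strategy---reduce to one inclusion by a Hilbert-function count, then verify the inclusion by cases---is exactly the paper's approach, and your treatment of the ``interesting'' case $j\in F$, $i\notin F$ is correct and matches the paper's computation $\init(\varphi_{ij}(x_F-x_{F'}))=x_F$. However, your case~(a) has a genuine gap in the subcase $i\in F$, $j\notin F$: it is \emph{not} true that $F\notin\Shift_{ij}(\Gamma)$ forces $F\notin\Gamma$ here. Lemma~\ref{2no1}(ii) only tells you that such an $F$ lies in $\Shift_{ij}(\Gamma)$ iff $F\setminus\{i\}\in\lk{ij}$, i.e.\ iff $F\cup\{j\}\in\Gamma$; it says nothing about $F$ itself. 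A concrete counterexample: take $n=3$, $i=1$, $j=2$, and $\Gamma$ with facets $\{1,2\}$ and $\{1,3\}$. Then $G(I_\Gamma)=\{x_2x_3\}$ (so the hypothesis holds), $\Shift_{12}(\Gamma)$ has facets $\{1,2\}$ and $\{2,3\}$, and $F=\{1,3\}$ satisfies $i\in F$, $j\notin F$, $F\notin\Shift_{12}(\Gamma)$, yet $F\in\Gamma$. Thus no $x_G\in G(I_\Gamma)$ has $G\subseteq F$, and your argument breaks precisely at the unique generator $x_1x_3$ of $I_{\Shift_{12}(\Gamma)}$.

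The fix is short and is what the paper does: when $i\in F$, $j\notin F$ and $F\notin\Shift_{ij}(\Gamma)$, one has $F\notin\Gamma$ \emph{or} $(F\setminus\{i\})\cup\{j\}\notin\Gamma$ (since if both were faces, $C_{ij}(F)=F$ would place $F$ in $\Shift_{ij}(\Gamma)$). In the second alternative the monomial $x_{(F\setminus\{i\})\cup\{j\}}$ lies in your set $B$, and you have already computed that its image under $\varphi_{ij}$ has initial term $x_i\,x_{F\setminus\{i\}}=x_F$. With this correction your argument is complete and coincides with the paper's.
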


\begin{proof}
Since $\Gamma$ and $\Shift_{ij}(\Gamma)$ have the same $f$-vector,
$I_\Gamma$ and $I_{\Shift_{ij}(\Gamma)}$ have the same Hilbert function,
that is, $\dim_K (I_\Gamma)_k = \dim_K (I_{\Shift_{ij}(\Gamma)})_k$ for all integers $k \geq 0$,
where $I_k$ denotes the homogeneous component of degree $k$ of a homogeneous ideal $I$.
Since $I_\Gamma$ and $\init (\varphi_{ij}(I_\Gamma))$ also have the same Hilbert function,
what we must prove is  $G(I_{\Shift_{ij}(\Gamma)}) \subset \init ( \varphi_{ij}(I_\Gamma))$.
Let $x_F \in G(I_{\Shift_{ij}(\Gamma)})$.

\textit{Case 1}:
Suppose $i \not \in F$.
If $j \not \in F$ then $x_F \in I_\Gamma$.
Thus we have $\init(\varphi_{ij}(x_F)) =x_F \in \init ( \varphi_{ij}(I_\Gamma))$
as desired.
If $j \in F$ then $x_F \in I_\Gamma$ and $x_{(F \setminus \{j\}) \cup \{i\}} \in I_\Gamma$
by the definition of $\Shift_{ij}$.
Then we have
$\init ( \varphi_{ij}(x_F - x_{(F \setminus \{j\}) \cup \{i\}})) = x_F \in \init ( \varphi_{ij}(I_\Gamma))$
as desired.

\textit{Case 2}:
Suppose $i \in F$.
If $j \not \in F$ then $x_F \in I_\Gamma$ or $x_{(F \setminus \{i\}) \cup \{j \}} \in I_\Gamma$.
In both cases we have $x_F \in  \init ( \varphi_{ij}(I_\Gamma))$
since $\init(\varphi_{ij}(x_F)) =
\init(\varphi_{ij}(x_{(F \setminus \{i\}) \cup \{j \}}))
= x_F$.
If $j \in F$ then $x_F \in I_\Gamma$.
By the assumption, there exists $x_G \in G(I_\Gamma)$ such that
$G \subset F$ and $\{i,j\} \not \subset G$.
Then $\init ( \varphi_{ij}(x_G))$ is either $x_G$ or $x_{(G \setminus \{j\}) \cup \{i\}}$.
In both cases
$\init (\varphi_{ij}(x_G))$ divides $x_F$.
Hence $x_F \in \init ( \varphi_{ij}(I_\Gamma))$
as desired.
\end{proof}

\begin{rem}
Lemma \ref{4no3} is false if $I_\Gamma$ has a generator which is divisible by $x_ix_j$.
Indeed, it is easy to see that if $x_ix_jx_F \in G(I_\Gamma)$ then
$x_i^2 x_F$ is a generator of $\init(\varphi_{ij}(I_\Gamma))$.
\end{rem}

Next, we study a few simple facts on strongly edge decomposable complexes.

\begin{lemma} \label{suika}
Let $\Gamma$ be a $(d-1)$-dimensional strongly edge decomposable complex on $[n]$.
%If $\Gamma$ is not the boundary of a simplex then
Then $\dim \contra {ij} =d-1$ and $\dim \lk {ij} =d-3$ for any $\{i,j\} \in \Gamma$. 
\end{lemma}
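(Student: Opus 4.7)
The first identity $\dim \lk{ij} = d-3$ will follow from purity alone. Since $\Gamma$ is pure of dimension $d-1$, the edge $\{i,j\}$ lies in some facet $F$ of size $d$, so $F \setminus \{i,j\} \in \lk{ij}$ has cardinality $d-2$; conversely, every $G \in \lk{ij}$ satisfies $G \cup \{i,j\} \in \Gamma$, which forces $|G| \leq d-2$.

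For the second identity, the inequality $\dim \contra{ij} \leq d-1$ is immediate from the defining formula of $\contra{ij}$, since each listed face has cardinality at most $d$. The nontrivial direction requires producing a face of $\contra{ij}$ of size $d$, and for this it suffices to exhibit a facet of $\Gamma$ not containing $i$, since such a facet lies in $\contra{ij}$ with size $d$. To produce one, I plan to establish the stronger auxiliary claim that no strongly edge decomposable complex of nonnegative dimension is a cone, in the sense that no vertex is contained in every facet; applied to $\Gamma$ with the vertex $i$, this supplies the desired facet.

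The auxiliary claim I will prove by induction on the number of vertices of $\Gamma$. The base case $\Gamma = \partial \Delta^{[m]}$ with $m \geq 2$ is immediate, since the facet $[m] \setminus \{v\}$ omits any chosen vertex $v$. For the inductive step, suppose $\Gamma$ is SED via $\{a,b\}$ with $\contra{ab}$ and $\lk{ab}$ both SED, and assume for contradiction that $\Gamma$ is a cone with apex $v$. I will show that $\contra{ab}$ inherits a cone structure, treating two cases. If $v \neq a$, then given any facet $H$ of $\contra{ab}$, I lift $H$ to an underlying face of $\Gamma$ via the defining formula, extend that face to a facet $F'$ of $\Gamma$ (which contains $v$), and use the maximality of $H$ in $\contra{ab}$ to conclude $v \in H$; hence $\contra{ab}$ is a cone with apex $v$. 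If $v = a$, then $\Gamma = \{a\} * \lk a$, and a direct computation yields $\contra{ab} = \{b\} * \lk a$, which is a cone with apex $b$. Since $\{a,b\} \in \Gamma$ forces $\{b\} \in \contra{ab}$, the complex $\contra{ab}$ has dimension at least $0$ and at most $n-1$ vertices, so the inductive hypothesis applies and yields a contradiction.

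The main obstacle is the case $v = a$, where the contraction treats $a$ and $b$ asymmetrically: one must verify that every facet of $\{b\} * \lk a$ contains $b$ even though $b$ may already be a vertex of $\lk a$, which follows from the observation that otherwise such a facet could be enlarged by adjoining $b$, contradicting maximality.
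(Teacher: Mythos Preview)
Your proof is correct and follows essentially the same strategy as the paper's: both reduce to showing that a strongly edge decomposable complex of nonnegative dimension cannot be a cone, by observing that the cone property is preserved under contraction while repeated contractions along the SED structure eventually reach a boundary of a simplex. The paper asserts ``if $\Gamma$ is a cone then its contraction is again a cone'' without proof; your case analysis $v\neq a$ versus $v=a$ is exactly what is needed to justify that assertion, so your argument is a fleshed-out version of the same idea rather than a different route.
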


\begin{proof}
Since $\Gamma$ is pure,
$\dim \lk {ij} = d-3$ is obvious.
Suppose $\dim \contra {ij} <d-1$.
Then all facets of $\Gamma$ contain $\{i,j\}$.
Thus $\Gamma$ is a cone (that is, $\Gamma= \{ v\} * \lk {v}$ for some $ \{v\} \in \Gamma$).
However, if $\Gamma$ is a cone then its contraction is again a cone.
This contradicts the assumption since if $\Gamma$ is strongly edge decomposable then
we can obtain the boundary of a simplex by taking contractions repeatedly.
Thus $\dim \contra {ij} =d-1$.
\end{proof}

Let $\Gamma$ be a $(d-1)$-dimensional simplicial complex on $[n]$.
If $\Gamma$ satisfies the Link condition w.r.t.\ $\{i,j\} \in \Gamma$
then Lemma \ref{2no1} (ii) says that
%\begin{eqnarray} \label{2-4}
%\Shift_{ij}(\Gamma) &=& \contra {ij} \cup ( \{i,j\} * \lk {ij})\\
%\nonumber &=& \contra {ij}\ \dunion \ \{ \{i\} \cup F : F \in \{j\} * \lk {ij}\},
%\end{eqnarray}
%where $\dunion$ denotes the disjoint union
%and where the second equality follows since
%$\contra {ij} \supset \{ F \in \Gamma: i \not \in F \} \supset \{j \} * \lk {ij}$.
%This equation says that
\begin{eqnarray*}
f_{k}(\Gamma) = 
f_k \big( \contra {ij} \big) + f_{k-1} \big( \{j \} * \lk {ij} \big) \ \ \mbox{ for $k=0,1,\dots ,d-1$}.
\end{eqnarray*}
Moreover, if $\dim \contra {ij} = d-1$ and $\dim \lk {ij} = d-3$ then,
by using the relation between $f$-vectors and $h$-vectors, we have
%(\ref{2-4}) says
\begin{eqnarray} \label{eichi}
h_k (\Gamma) &=& h_k \big(\contra {ij}\big)
+ h_{k-1} \big ( \{j \} * \lk {ij} \big ) \\
\nonumber &=& h_k \big (\contra {ij} \big) + h_{k-1} \big(\lk {ij} \big)
%\ \ \mbox{ for all $k$}.
\end{eqnarray}
for $k=0,1,\dots,d$.
Then, arguing inductively, the $h$-vector of strongly edge decomposable complexes
satisfies the following conditions.

\begin{lemma}[Nevo] \label{2no2}
Let $\Gamma$ be a $(d-1)$-dimensional strongly edge decomposable complex.
Then $h_i(\Gamma)=h_{d-i}(\Gamma)$ for $i=0,\dots,d$ and
$h_0(\Gamma) \leq h_1(\Gamma) \leq \cdots \leq h_{\lfloor \frac d 2 \rfloor} (\Gamma)$,
where $\lfloor \frac d 2 \rfloor$ is the integer part of $\frac d 2$.
\end{lemma}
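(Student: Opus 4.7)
The plan is a structural induction on the recursive definition of strong edge decomposability. For the base cases, $\Gamma$ is either $\{\emptyset\}$ (with $h$-vector $(1)$) or the boundary of a simplex (with $h$-vector $(1,1,\ldots,1)$), and both claims hold trivially.

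For the inductive step, I would assume $\Gamma$ is $(d-1)$-dimensional and satisfies the Link condition with respect to some $\{i,j\}\in\Gamma$, with $\contra{ij}$ and $\lk{ij}$ both strongly edge decomposable. By Lemma~\ref{suika}, $\contra{ij}$ is $(d-1)$-dimensional and $\lk{ij}$ is $(d-3)$-dimensional, which is exactly what is needed for identity (\ref{eichi}) to apply:
$$h_k(\Gamma)=h_k(\contra{ij})+h_{k-1}(\lk{ij}),\qquad k=0,1,\ldots,d,$$
using the convention that out-of-range $h$-entries are zero. By the inductive hypothesis, both summand complexes satisfy the symmetry and unimodality conclusions.

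For the symmetry part, I would invoke the inductive hypothesis to get $h_{d-k}(\contra{ij})=h_k(\contra{ij})$, and, because $\lk{ij}$ is $(d-3)$-dimensional, its symmetry is about index $(d-2)/2$, giving $h_{d-k-1}(\lk{ij})=h_{(d-2)-(d-k-1)}(\lk{ij})=h_{k-1}(\lk{ij})$. Substituting into (\ref{eichi}) at index $d-k$ and comparing with the expression for $h_k(\Gamma)$ yields $h_{d-k}(\Gamma)=h_k(\Gamma)$.

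For the unimodality part, I would form the telescoping difference
$$h_k(\Gamma)-h_{k-1}(\Gamma)=\bigl(h_k(\contra{ij})-h_{k-1}(\contra{ij})\bigr)+\bigl(h_{k-1}(\lk{ij})-h_{k-2}(\lk{ij})\bigr)$$
and show each bracket is nonnegative when $1\leq k\leq\lfloor d/2\rfloor$. The first bracket is $\geq 0$ by unimodality of the $(d-1)$-dimensional complex $\contra{ij}$ at index $k\leq\lfloor d/2\rfloor$; the second bracket is $\geq 0$ by unimodality of the $(d-3)$-dimensional complex $\lk{ij}$ at index $k-1$, since the elementary identity $\lfloor d/2\rfloor-1=\lfloor(d-2)/2\rfloor$ ensures that $k-1$ still lies in the non-decreasing range of its $h$-vector. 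The main potential obstacle is simply the bookkeeping of index ranges and of boundary conventions when $d$ is small enough that $\lk{ij}$ collapses to $\{\emptyset\}$, but once one fixes $h_{-1}:=0$ and $h_j:=0$ for $j$ past the natural range, the formula (\ref{eichi}) and both inequalities hold uniformly, so no separate small-case argument is required.
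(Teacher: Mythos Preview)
Your proposal is correct and follows exactly the inductive argument via identity~(\ref{eichi}) that the paper sketches before the statement (the paper itself only says ``arguing inductively'' and cites \cite[Corollary~4.3]{N2}). Your careful check of the index shift $\lfloor d/2\rfloor-1=\lfloor(d-2)/2\rfloor$ and of the boundary conventions for small $d$ fills in precisely the bookkeeping the paper leaves to the reader.
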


The above result was proved in \cite[Corollary 4.3]{N2}.
Actually, the $h$-vector of strongly edge decomposable complexes satisfies a stronger condition.
In the next section, we will see that strongly edge decomposable complexes have the strong Lefschetz property.
Thus the $g$-vector of those complexes is an $M$-vector 
(see \cite{S80,S}).

\section{The strong Lefschetz property}

In this section, we study the relation between the Link condition
and the strong Lefschetz property.
Let $S=K[x_1,\dots,x_n]$ be a polynomial ring over an infinite field $K$
with each $\deg x_i =1$
and $\mideal=(x_1,\dots,x_n)$ the graded maximal ideal of $S$.
For a graded $S$-module $M$,
we write $M_k$ for the homogeneous component of degree $k$ of $M$.
We refer the reader to \cite{S} for foundations on commutative algebra,
such as the Cohen--Macaulay property and linear systems of parameters.

Let $I \subset S$ be a homogeneous ideal and $A=S/I$.
Let $d$ be the Krull dimension of $A$.
We say that $A$ has the \textit{strong Lefschetz property} if $A$ is Cohen--Macaulay
and there exist a linear system of parameters (l.s.o.p.\ for short) 
$\theta_1,\dots,\theta_d \in S_1$ of $A$ and a linear form $\omega \in S_1$ such that the multiplication map
$$\omega^{s-2i} : \big(A/(\theta_1,\dots,\theta_d)A \big)_i \to \big(A/(\theta_1,\dots,\theta_d)A \big)_{s-i}$$
is bijective for $i=0,1,\dots, \lfloor \frac s 2 \rfloor$,
where $s = \max \{ k : \dim_K (A/(\theta_1,\dots,\theta_d)A)_k \ne 0\}$.
The element $\omega$ is called a \textit{strong Lefschetz element} of
$A/(\theta_1,\dots,\theta_d)A$.

Let $\Gamma$ be a $(d-1)$-dimensional simplicial complex on $[n]$.
The ring $K[\Gamma]= S /I_\Gamma$ is called the \textit{Stanley--Reisner ring of $\Gamma$}.
It is known that the Krull dimension of $K[\Gamma]$ is equal to $d$ (see \cite[II \S 1]{S}).
Let $\theta_1,\dots,\theta_d \in S_1$ be an l.s.o.p.\ of $S/I_\Gamma$.
Then it follows from \cite[II Corollary 2.5]{S} that if $\Gamma$ is Cohen--Macaulay
then
\begin{eqnarray} \label{eight}
h_i(\Gamma) = \dim_K \big( K[\Gamma]/ (\theta_1,\dots,\theta_d)K[\Gamma] \big )_i
\ \ \ \mbox{ for }i=0,1,\dots,d,
\end{eqnarray}
and $\dim_K ( K[\Gamma]/ (\theta_1,\dots,\theta_d)K[\Gamma]  )_i=0$
for $i >d$.
We say that $\Gamma$ has the strong Lefschetz property if $h_d(\Gamma) >0$
and $K[\Gamma]$ has the strong Lefschetz property.
Thus, if $\Gamma$ has the strong Lefschetz property then
$h_i(\Gamma)=h_{d-i}(\Gamma)$ for $i=0,1,\dots,d$.

We identify a sequence of linear forms $\theta_1,\dots,\theta_d \in S_1$
with an element of $K^{n \times d}$.
We require the following well-known fact
(see e.g., the proof of \cite[Theorem 4.2]{Sw}).

\begin{lemma} \label{4no2}
Let $I \subset S$ be a homogeneous ideal and $d$ the Krull dimension of $S/I$.
If $A=S/I$ has the strong Lefschetz property,
then there exits a nonempty Zariski open subset $U \subset K^{n \times (d+1)}$ such that,
for any sequence of linear forms $\theta_1,\dots,\theta_d,\theta_{d+1} \in U$,
$\theta_1,\dots,\theta_d$ is an l.s.o.p.\ of $A$ and $\theta_{d+1}$ is a strong
Lefschetz element of $A/(\theta_1,\dots,\theta_d)A$.
\end{lemma}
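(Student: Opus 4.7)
The plan is to write $U$ as the intersection of two Zariski open conditions on $K^{n \times (d+1)}$---one asserting that $\theta_1,\dots,\theta_d$ is an l.s.o.p.\ of $A$, the other that $\theta_{d+1}$ induces the strong Lefschetz multiplications on the Artinian quotient $B(\theta_\bullet):=A/(\theta_1,\dots,\theta_d)A$---and then to conclude nonemptiness from the assumed existence of such a tuple.

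First, I would invoke the standard fact that, over the infinite field $K$, the set $V_1\subset K^{n\times d}$ of l.s.o.p.'s of $A$ is Zariski open: its complement is a finite union of linear subspaces arising from the minimal primes of $A$ of positive dimension. Since the strong Lefschetz property forces $A$ to be Cohen--Macaulay of Krull dimension $d$, for each $\theta_\bullet\in V_1$ the quotient $B(\theta_\bullet)$ is Artinian with $\dim_K B(\theta_\bullet)_i = h_i$ independent of the chosen $\theta_\bullet$, and the integer $s=\max\{k : h_k\ne 0\}$ is an invariant of $A$.

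Next, for each $i\in\{0,1,\dots,\lfloor s/2\rfloor\}$, I would consider the $K$-linear map
\[
\nu_i(\theta_\bullet,\omega)\colon A_i\oplus A_{s-i-1}^d \longrightarrow A_{s-i},\quad (a,b_1,\dots,b_d)\longmapsto \omega^{s-2i}a+\sum_{j=1}^d\theta_j b_j.
\]
After fixing $K$-bases of the finite-dimensional source and target, the matrix of $\nu_i$ has entries that are polynomials in the $n(d+1)$ coordinates of $(\theta_\bullet,\omega)$, so the locus $W_i$ where $\nu_i$ is surjective is a Zariski open subset of $K^{n\times(d+1)}$, cut out by the non-vanishing of a maximal minor. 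The key observation is that, for $(\theta_\bullet,\omega)\in V_1\times K^n$, surjectivity of $\nu_i$ is equivalent to $\omega^{s-2i}A_i+(\theta_\bullet)A_{s-i-1}=A_{s-i}$, which in turn is equivalent to surjectivity of multiplication by $\omega^{s-2i}$ from $B(\theta_\bullet)_i$ to $B(\theta_\bullet)_{s-i}$. Since both spaces have dimension $h_i=h_{s-i}$, this surjectivity upgrades automatically to bijectivity; hence, on $V_1\times K^n$, the strong Lefschetz condition on $\omega$ relative to $\theta_\bullet$ coincides with $\bigcap_i W_i$.

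Putting everything together, $U:=(V_1\times K^n)\cap\bigcap_{i=0}^{\lfloor s/2\rfloor}W_i$ is a Zariski open subset of $K^{n\times(d+1)}$, and it is nonempty because the strong Lefschetz property of $A$ furnishes at least one point in $U$. The main conceptual obstacle, which the lifted map $\nu_i$ is introduced to handle, is that the quotient $B(\theta_\bullet)$ varies with $\theta_\bullet$ and its underlying vector spaces are not canonically identified across different choices, so the strong Lefschetz condition cannot be encoded directly as polynomial non-vanishing on the quotient; by instead working with $\nu_i$ on the fixed ambient spaces $A_i$, $A_{s-i-1}$, $A_{s-i}$, one reduces it to a rank condition on a matrix of fixed size with polynomial entries, which is transparently Zariski open.
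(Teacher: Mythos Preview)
The paper does not give its own proof of this lemma; it introduces the statement as a ``well-known fact'' and refers the reader to the proof of \cite[Theorem~4.2]{Sw}. Your argument is correct and is precisely the standard semicontinuity proof one expects from such a reference: lifting the Lefschetz maps on the varying quotient $B(\theta_\bullet)$ to the fixed-size maps $\nu_i\colon A_i\oplus A_{s-i-1}^d\to A_{s-i}$ is exactly the device that turns the condition into a polynomial rank condition on a matrix of fixed shape, making Zariski openness transparent. The symmetry $h_i=h_{s-i}$ that you use to upgrade surjectivity to bijectivity is legitimate here, since the hypothesised witness already forces it and the Cohen--Macaulayness of $A$ makes the $h_i$ independent of the chosen l.s.o.p.

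One small slip: the complement of $V_1$ in $K^{n\times d}$ is Zariski closed, but it is not in general a finite union of \emph{linear} subspaces. Already for a single minimal prime $\mathfrak p$ of $I$, the failure of $\theta_1,\dots,\theta_d$ to cut $S/\mathfrak p$ down to dimension~$0$ is a determinantal (hence typically nonlinear) condition on the coefficients; equivalently, $V_1$ is the locus where $A_{t-1}^d\to A_t$, $(a_j)\mapsto\sum_j\theta_j a_j$, is surjective for a suitably large fixed $t$. This inaccuracy is harmless for your argument, since you only use that $V_1$ is open and nonempty.
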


Let $\Gamma$ be a simplicial complex on $[n]$.
The \textit{Stellar subdivision at $F \in \Gamma$} is the operation
$\Gamma \to \mathrm{Stellar}(F,\Gamma)
=(\Gamma \setminus (F * \lk F)) \cup (\{v_F\}* \partial F * \lk F)$,
where $v_F$ is a vertex which is not contained in $[n]$ and
$\partial F$ is the boundary of the simplex generated by $F$.
It is easy to see that $\mathrm{Stellar}(F,\Gamma)$ satisfies the Link
condition w.r.t.\ $\{v_F,v\}$ and $\mathcal{C}_ {\mathrm{Stellar}(F,\Gamma)} (v_F,v)=\Gamma$ for any $v \in F$.
In \cite[Theorem 1.2]{BN}, Babson and Nevo proved that if $\Gamma$ and
$\lk F$ have the strong Lefschetz property then $\mathrm{Stellar}(F,\Gamma)$ has the strong Lefschetz property
in characteristic $0$.
In particular, in the proof of \cite[Theorem 1.2]{BN}, they essentially proved the following statement when $\mathrm{char}(K)=0$.

\begin{proposition} \label{4no5}
Let $\Gamma$ be a $(d-1)$-dimensional simplicial complex on $[n]$ satisfying the Link condition with respect to $\{i,j\}$,
where $1 \leq i < j \leq n$.
Suppose $\dim \contra {ij} = d-1$ and $\dim \lk {ij} =d-3$.
If $\contra {ij}$ and $\lk {ij}$ have the strong Lefschetz property
then $\Gamma$ has the strong Lefschetz property.
\end{proposition}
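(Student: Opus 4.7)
My plan is to reduce the problem to the shifted complex $\Sigma = \Shift_{ij}(\Gamma)$, whose structure is rigid enough to produce the strong Lefschetz property by a short exact sequence argument. First, by Lemma \ref{2no1}(iii) the Link condition ensures $I_\Gamma$ has no generator divisible by $x_ix_j$, so Lemma \ref{4no3} gives $\init(\varphi_{ij}(I_\Gamma)) = I_\Sigma$. Since Hilbert functions are constant along the Gr\"obner flat deformation and the strong Lefschetz property is Zariski-open in the parameter data (Lemma \ref{4no2}), upper semicontinuity of the rank of the multiplication maps transfers SLP from the special fibre $K[\Sigma]$ to the generic fibre $S/\varphi_{ij}(I_\Gamma) \cong K[\Gamma]$. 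Hence it suffices to establish SLP for $K[\Sigma]$.

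The decomposition in Lemma \ref{2no1}(ii) reads $\Sigma = \contra{ij} \cup (\{i,j\}*\lk{ij})$ with intersection $\{j\}*\lk{ij}$, and an inspection of monomial supports identifies the ideal $(x_i) \subset K[\Sigma]$ with $x_i \cdot K[\{i,j\}*\lk{ij}]$ and the quotient $K[\Sigma]/(x_i)$ with $K[\contra{ij}]$. Because $x_i$ is a nonzerodivisor on the double cone $K[\{i,j\}*\lk{ij}] = K[\lk{ij}][x_i,x_j]$, I obtain the graded $S$-module short exact sequence
\[
0 \longrightarrow K[\{i,j\}*\lk{ij}](-1) \xrightarrow{\;\cdot\, x_i\;} K[\Sigma] \longrightarrow K[\contra{ij}] \longrightarrow 0,
\]
whose Hilbert-function check recovers exactly \eqref{eichi}.

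Both $K[\contra{ij}]$ and $K[\{i,j\}*\lk{ij}]$ are Cohen--Macaulay of Krull dimension $d$, and each has SLP: $\contra{ij}$ by hypothesis, the cone by adjoining $x_i, x_j$ to an l.s.o.p.\ of $K[\lk{ij}]$. By Lemma \ref{4no2} applied to the three rings $K[\Sigma]$, $K[\contra{ij}]$, $K[\{i,j\}*\lk{ij}]$ and since $K$ is infinite, I fix linear forms $(\theta_1,\ldots,\theta_d,\omega)$ in the nonempty Zariski-open subset where $\Theta = \theta_1,\ldots,\theta_d$ is a common l.s.o.p.\ and $\omega$ is simultaneously a strong Lefschetz element for the Artinian reductions of $K[\contra{ij}]$ and $K[\{i,j\}*\lk{ij}]$. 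Regularity of $\Theta$ on each Cohen--Macaulay ring keeps the sequence exact after tensoring with $S/\Theta$. The multiplication-by-$\omega^{d-2k}$ diagram between degrees $k$ and $d-k$ then has right vertical the Lefschetz bijection for $K[\contra{ij}]/\Theta$ (socle degree $d$) and left vertical $\omega^{d-2k}:(K[\{i,j\}*\lk{ij}]/\Theta)_{k-1} \to (\cdots)_{d-k-1}$, where the exponent satisfies $(d-2)-2(k-1) = d-2k$, matching the Lefschetz bijection for $K[\{i,j\}*\lk{ij}]/\Theta \cong K[\lk{ij}]/\Theta'$ (socle degree $d-2$). The five-lemma then forces the middle vertical to be bijective for every $k \leq d/2$, completing SLP for $K[\Sigma]$.

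\emph{Main obstacle.} The delicate point is the parity identity $(d-2)-2(k-1) = d-2k$ in the five-lemma step: this is what makes the two outer Lefschetz arrows land in the same row, and it depends on the dimension assumption $\dim \lk{ij} = d-3$, so that $K[\{i,j\}*\lk{ij}]$ has socle degree exactly two below $K[\contra{ij}]$. The secondary concern is arranging genericity of $(\Theta, \omega)$ for three rings simultaneously; this is handled by intersecting finitely many nonempty Zariski-open subsets over the infinite field $K$.
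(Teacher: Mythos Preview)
Your argument is correct and follows the paper's overall strategy---reduce to $\Sigma=\Shift_{ij}(\Gamma)$ via Lemmas \ref{2no1}, \ref{4no3}, \ref{4no4}, then exploit the decomposition of Lemma \ref{2no1}(ii)---but your execution of the Lefschetz step differs from the paper's.  The paper writes $K[\Sigma]=A\oplus x_iB\oplus x_i^2B\oplus\cdots$ with $A=K[\contra{ij}]$ and $B=K[\{j\}*\lk{ij}]$, constructs an explicit l.s.o.p.\ of the form $\theta_1,\dots,\theta_{d-1},x_i-\theta_d$ (all $\theta_k$ avoiding $x_i$), and verifies surjectivity of $\omega^{d-2s}$ by a two-case degree chase.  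You instead observe that $(x_i)\subset K[\Sigma]$ is exactly $K[\{i,j\}*\lk{ij}](-1)$ (your double cone is the paper's $B[x_i]$), obtain the short exact sequence, pick a \emph{generic} common l.s.o.p., and finish with the five-lemma, using the key parity match $(d-2)-2(k-1)=d-2k$.  The two arguments are equivalent in content; yours is more streamlined and avoids the hand-built parameter $x_i-\theta_d$, while the paper's explicit construction makes the regular sequence visible and yields Remark \ref{4no6} (Cohen--Macaulayness) as a direct by-product.

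One small wording issue: you invoke Lemma \ref{4no2} ``applied to the three rings $K[\Sigma],\dots$'', but Lemma \ref{4no2} presupposes the strong Lefschetz property, which is what you are proving for $K[\Sigma]$.  In fact you only need $\Theta$ to be an l.s.o.p.\ of $K[\Sigma]$, and this already follows (without any appeal to $K[\Sigma]$) from the fact that $\Theta$ is regular on the two Cohen--Macaulay outer terms of your exact sequence, hence on the middle term by the snake lemma.  With that adjustment the argument is complete and, like the paper's, valid in arbitrary characteristic since the only ``join'' you use is with a full simplex.
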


The proof given by Babson and Nevo needs the assumption
$\mathrm{char}(K)=0$ since they used the fact that if $\Gamma$ and $\Sigma$ are simplicial complexes having the strong Lefschetz property then $\Gamma*\Sigma$
also has the strong Lefschetz property when $\mathrm{char}(K)=0$
(see \cite[Theorem 2.2]{BN}). 
Here we give a more algebraic proof of Proposition \ref{4no5}
which is independent of the characteristic of the base field
by using the next fact.

\begin{lemma}[Wiebe] \label{4no4}
Let $I \subset S$ be a homogeneous ideal.
If $S/ \init(I)$ has the strong Lefschetz property
then $S/I$ has the strong Lefschetz property.
\end{lemma}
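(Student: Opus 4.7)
Write $A = S/I$ and $B = S/\init(I)$. The two rings have the same Hilbert function, hence the same Krull dimension $d$ and (once we verify CM-ness) the same $h$-vector. The strategy is to first transfer the Cohen--Macaulay property from $B$ to $A$ and then transfer the Lefschetz condition via a semicontinuity argument in the standard Gr\"obner flat family that interpolates between $I$ and $\init(I)$.

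First I would record the standard inequality $\mathrm{depth}(S/I) \geq \mathrm{depth}(S/\init(I))$, which follows because projective dimension cannot decrease under Gr\"obner degeneration. Since $B$ is CM of dimension $d$ and $A$ has the same dimension, $A$ is CM as well, and its $h$-vector coincides with that of $B$. In particular, after quotienting by an l.s.o.p., the graded components of the Artinian reductions of $A$ and $B$ have identical dimensions $h_0,\dots,h_d$.

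Next, by Lemma~\ref{4no2} applied to $B$ there is a nonempty Zariski open $U \subset K^{n \times (d+1)}$ of tuples $(\theta_1,\dots,\theta_d,\omega)$ that supply both an l.s.o.p.\ and a Lefschetz element for $B$. The locus in $K^{n \times d}$ where $(\theta_1,\dots,\theta_d)$ is an l.s.o.p.\ for $A$ is also a nonempty open set, so I may intersect these to obtain a nonempty open on which $\Theta = (\theta_1,\dots,\theta_d)$ is an l.s.o.p.\ for both $A$ and $B$. The real task is to show that on such a tuple, $\omega^{s-2i}$ is bijective on $(A/\Theta A)_i \to (A/\Theta A)_{s-i}$ for every $i \leq \lfloor s/2 \rfloor$.

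For this I would work in the Gr\"obner flat family $S[t]/\widetilde{I}$, whose fiber at $t=0$ is $B$ and whose fiber at each $\lambda \neq 0$ is isomorphic to $A$ via a rescaling of variables. A Nakayama-type argument, together with the hypothesis that $\Theta$ is an l.s.o.p.\ on both $B$ and $A$, shows that $\Theta$ is a regular sequence on the whole family, so in each graded degree the Artinian reduction is a free $K[t]$-module of rank $h_i$. The multiplication $\omega^{s-2i}$ is then represented in each degree by a square matrix $M_i(t) \in K[t]^{h_i \times h_i}$, and $\det M_i(0) \neq 0$ by the SLP assumption on $B$. Hence each $\det M_i(t)$ is a nonzero polynomial, and one may choose $\lambda \in K$, $\lambda \neq 0$, avoiding the finite union of zero sets of $\det M_i$ for $i \leq \lfloor s/2 \rfloor$; at this $\lambda$ all the multiplication maps are bijective. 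Transporting through the rescaling isomorphism $S[t]/(\widetilde{I},t-\lambda) \cong A$ shows that the images of $\Theta$ and $\omega$ witness the SLP for $A$.

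The main obstacle is the flat-family bookkeeping: verifying that $\Theta$ remains an l.s.o.p.\ on every fiber (so the Artinian reduction is genuinely a flat $K[t]$-module), making the rescaling isomorphism explicit, and checking that the linear forms recovered in $A$ really are generic linear forms so that the ranks can be read off the determinants $\det M_i(t)$. These steps are standard but constitute the technical heart of the proof.
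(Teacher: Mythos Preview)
Your flat-family semicontinuity argument is correct and is essentially what the paper's references unpack: the paper does not write out a proof but simply observes that Wiebe's argument for the $\mathfrak m$-primary case (which is already a Gr\"obner-degeneration argument) extends to arbitrary $I$ once one invokes Lemma~\ref{4no2} and Conca's inequality \cite[Theorem~1.1]{C}. Conca's inequality itself is proved by exactly the flat family $S[t]/\tilde I$ you describe, so your route and the paper's coincide at the level of ideas; you have just made the deformation explicit rather than citing it.

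One point to tighten: the claim that $\Theta$ is a regular sequence ``on the whole family'' and that each graded piece of $S[t]/(\tilde I,\Theta)$ is free over all of $K[t]$ is stronger than what actually holds or is needed. For finitely many special $\lambda\neq 0$ the rescaled system $\Theta^{(\lambda)}$ may fail to be an l.s.o.p.\ for $A$, and the module can acquire torsion there. What your Nakayama argument genuinely gives is freeness over the localization $K[t]_{(t)}$: the fiber dimension at $t=0$ is $h_i$ (since $\Theta$ is an l.s.o.p.\ for $B$), and the generic fiber dimension is also $h_i$ (since a generic rescaling of $\Theta$ is an l.s.o.p.\ for the Cohen--Macaulay ring $A$), so there is no torsion supported at $0$. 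That local freeness suffices: the matrices $M_i(t)$ are then well-defined in a Zariski neighborhood of $0$, $\det M_i(0)\neq 0$ forces $\det M_i(\lambda)\neq 0$ for some $\lambda\neq 0$ in that neighborhood, and transporting through the rescaling isomorphism yields a Lefschetz system for $A$. With this adjustment the proof is complete.
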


%In \cite[Proposition 2.9]{W},
Wiebe \cite{W} proved the above statement for $\mideal$-primary homogeneous ideals.
However, one can prove it for arbitrary homogeneous ideal
in the same way as the proof of \cite[Proposition 2.9]{W}
by using Lemma \ref{4no2} and \cite[Theorem 1.1]{C}.

%Now we show that strongly edge decomposable complexes have
%the strong Lefschetz property.

\begin{proof}[Proof of Proposition \ref{4no5}]
By Lemmas \ref{2no1}, \ref{4no3} and \ref{4no4},
we may assume that
$\Gamma = \Shift_{ij}(\Gamma)$.
Set $\Gamma_1 =\contra {ij}$ and $\Gamma_2 = \{j\} * \lk {ij}$.
We may assume $i=1$.
Let $S' =K[x_2,\dots,x_n]$ and let
$I_{\Gamma_1} \subset S'$ be the Stanley--Reisner ideal of $\Gamma_1$.
Let $\tilde I_{\Gamma_2}$ be the ideal of $S'$ generated by all squarefree monomials
$x_F \in S'$ with $F \not \in \Gamma_2$.
%(Thus $\tilde I_{\Gamma_2} = I_{\Gamma_2} + (x_i: \{ i \} \not \in \Gamma_2) \subset S'$.)
Let $A=S'/I_{\Gamma_1}$ and $B=S'/\tilde I_{\Gamma_2}$.
Then $B \cong K[\Gamma_2] \cong K[x_j] \otimes_K K[ \lk {ij}]$ as graded $K$-algebras
(see \cite[Exercise 5.1.20]{CM}), in particular,
$x_j$ is a nonzero divisor of $B$ and $B/x_jB \cong K[ \lk {ij}]$.
%Since $\Gamma_2= \{j\} * \lk {ij}$,
%it follows that $x_j$ is a nonzero divisor of $B$ and
%$B/x_jB$ is isomorphic to the Stanley--Reisner ring of $\lk {ij}$
%$K[\lk {1j}]$
%as a graded $K$-algebra.
Hence
$B$ is Cohen--Macaulay and has the strong Lefschetz property.
Then, by Lemma \ref{4no2}, there exists a sequence of linear forms
$\theta_1,\dots,\theta_{d-1},\theta_d,\omega \in S'_1$ such that
\begin{itemize}
\item[(a)]
$\theta_1,\dots,\theta_{d-1},\theta_d$ is an l.s.o.p.\ of $A$ and $\omega$ is a strong Lefschetz element of 
$A/(\theta_1,\dots,\theta_{d-1},\theta_d)A$.
\item[(b)]
$\theta_1,\dots,\theta_{d-1}$ is an l.s.o.p.\ of $B$
and $\omega$ is a strong Lefschetz element of \linebreak 
$B/(\theta_1,\dots,\theta_{d-1})B.$
\end{itemize}
Note that 
$\max \{ k:
\dim_K (B/(\theta_1,\dots,\theta_{d-1})B)_k \ne 0\}
= d-2$ by (\ref{eight}) since $h_k (\lk {ij}) = h_k(\{j\} * \lk {ij})$ for all $k$.
\medskip

First we show that $\theta_1,\dots,\theta_{d-1},x_1-\theta_d$
is a regular sequence of $K[\Gamma]$.
Since Lemma \ref{2no1} (ii)
says $\Gamma= \Gamma_1 \cup \{ \{1\} \cup F: F \in \Gamma_2\}$,
it follows that
$K[\Gamma]$ is equal to
%$$ A \bigoplus_{ k \geq 1} x_1^k B$$ 
$$ A \oplus x_1 B \oplus x_1^2 B \oplus x_1^3 B \oplus \cdots$$ 
as $K$-vector spaces.
Then,
since $A=S'/I_{\Gamma_1}$ and $B=S'/ \tilde I_{\Gamma_2}$ are Cohen--Macaulay by the assumption,
it is clear that $\theta_1,\dots,\theta_{d-1}$
is a regular sequence of $K[\Gamma]$.
Let $R= K[\Gamma]/(\theta_1,\dots,\theta_{d-1})K[\Gamma]$,
$A'= A / (\theta_1,\dots,\theta_{d-1})A$
and $B'= B/ (\theta_1,\dots,\theta_{d-1})B$.
Then
$$R = A' \oplus x_1 B' \oplus x_1^2 B' \oplus x_1^3 B' \oplus \cdots.$$
What we must prove is that the kernel of the multiplication map
$(x_1-\theta_d) : R_s \to R_{s+1}$ is $0$ for all $s \geq 0$.
For an element $f_0 \in A'$, we write $\rho(f_0)$
for the element of $B'$ which satisfies
$x_1 \times f_0 = x_1 \rho (f_0) \in x_1 B'$ in $R$.
For any $f = f_0 + \sum_{k=1}^s x_1^k f_k \in R_s$,
where $f_0 \in A'_s$ and $f_k \in B'_{s-k}$ for $k=1,2,\dots,s$,
we have
$$(x_1-\theta_d) f = x_1^{s+1} f_s +
\left(\sum_{k=2}^s x_1^k ( f_{k-1} -\theta_d f_k) \right)
+ x_1 \big( \rho(f_0) - \theta_d f_1 \big) - \theta_d f_0.$$
Suppose $(x_1-\theta_d) f=0$.
Then $f_s =0$, $( f_{k-1} -\theta_d f_k) =0$ for $k=2,\dots,s$
and $\theta_d f_0 =0$.
Thus, inductively, we have $f_k = 0$ for $k=1,2,\dots,s$
%since $S'/I_{\Gamma_1}$ is Cohen--Macaulay,
and, since $\theta_d$ is a nonzero divisor of $A'$ by (a), we have $f_0 =0$ as desired.

Then, since $\dim \Gamma =d-1$ and since $\theta_1,\dots,\theta_{d-1},x_1-\theta_d$
is a regular sequence of $K[\Gamma]$,
it follows that $K[\Gamma]$ is Cohen--Macaulay
and the sequence $\theta_1,\dots,\theta_{d-1},x_1-\theta_d$ is an l.s.o.p.\ of $K[\Gamma]$.
\medskip

Second, we show that $\omega$ is a strong Lefschetz element of
$R/(x_1 - \theta_d)R$.
By (\ref{eichi}) and (\ref{eight}) we have
\begin{eqnarray*}
\dim_K \big( R/(x_1 - \theta_d)R \big)_k = h_k (\Gamma)= h_k \big( \contra {ij} \big) + h_{k-1} \big( \lk {ij} \big)
\ \ \mbox{ for }k=0,1,\dots,d.
\end{eqnarray*}
Then, since the assumption says that
$ h_k(\contra {ij}) = h_{d-k}(\contra {ij})$ and $h_{k-1}(\lk {ij}) = h_{d-k-1}(\lk {ij})$ for $k=0,1,\dots,d$,
we have
$$ \dim_K \big( R/(x_1 -\theta_d)R \big)_k = 
\dim \big( R/(x_1 -\theta_d)R \big)_{d-k}
\ \ \mbox{ for $k=0,1,\dots,d$}.$$ 
Thus what we must prove is that the multiplication map
$$ \omega^{d-2s} : \big( R/(x_1 -\theta_d)R \big)_s \to 
\big( R/(x_1 -\theta_d)R \big)_{d-s} $$
is surjective for $s=0,1,\dots,\lfloor  \frac d 2 \rfloor.$

Fix an integer $0 \leq s \leq \frac d 2.$ \smallskip
For any $f \in R$, write $[f]$ for its image on $R/(x_1 -\theta_d)R$.

\textit{Case 1}:
Let $x_1^k f \in x_1^k B'_{d-s-k}$ with $k \geq 1$.
We will show that there exists $g \in R_s$
such that $[\omega^{d-2s} g] = [x_1^k f]$.
If $k \geq 2$ then we have $(x_1 -\theta_d) x_1^{k-1} f \in (x_1 -\theta_d) R$,
and hence
$$[x_1^k f]=  [x_1^k f] -[x_1^k f -x_1^{k-1} \theta_d f]= [x_1^{k-1}\theta_d f].$$
Thus we may assume that $k=1$.
Then (b) says that there exists $g_0 \in B'_{s-1}$ such that
$\omega^{d-2 -2(s-1)} g_0 =f$.
Hence we have $x_1 g_0 \in R_s$ and $[\omega^{d -2s} x_1g_0 ] = [x_1 f]$
as desired.

\textit{Case 2}:
Let $f \in A'_{d-s}$.
We will show that there exists $g \in R_s$ such that
$[\omega^{d-2s}g] = [f]$.
By (a), there exists $g_0 \in A'_s$ and $h \in A'_{d-s-1}$ such that
$f-\omega^{d-2s}g_0 = \theta_d h$.
Then we have
$$[f-\omega^{d-2s}g_0] = [\theta_d h] = [\theta_d h] + [(x_1-\theta_d)h] = [x_1 \rho(h)].$$
Then, by Case 1,
there exists $g_1 \in B'_{s-1}$ such that $\omega^{d-2s}g_1=\rho(h)$.
Thus we have $g_0 + x_1 g_1 \in R_s$ and
$[\omega^{d-2s}(g_0 + x_1 g_1)] = [f]$ as desired.
\medskip

Now Case 1 and Case 2 say that the multiplication map
$\omega^{d-2s} : (R/(x_1 -\theta_d)R)_s \to (R/(x_1 -\theta_d)R)_{d-s}$
is surjective for all $s=0,1,\dots, \lfloor \frac d 2 \rfloor $.
Hence $\Gamma$ has the strong Lefschetz property.
\end{proof}

\begin{rem} \label{4no6}
The first step of the proof of Proposition \ref{4no5} says that,
with the same notation as in Proposition \ref{4no5},
if $\contra {ij}$ and $\lk {ij}$ are Cohen--Macaulay then $\Gamma$ is also Cohen--Macaulay.
\end{rem}

%\begin{theorem} \label{4no7}
%Strongly edge decomposable complexes have the strong Lefschetz property.
%\end{theorem}

\begin{cor} \label{4no7}
Strongly edge decomposable complexes are Cohen--Macaulay and have the strong Lefschetz property.
\end{cor}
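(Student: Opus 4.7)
The plan is to prove Corollary 4.7 by induction on the pair $(n, d)$ where $n$ is the number of vertices of $\Gamma$ and $d-1$ is its dimension, using the recursive definition of the strongly edge decomposable property together with Proposition 4.5 and Remark 4.6.

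For the base case, I would handle the two families named in Definition 1.1. The void complex $\{\emptyset\}$ has Stanley--Reisner ring $K$ (Krull dimension $0$), which is trivially Cohen--Macaulay and has the strong Lefschetz property. For the boundary complex $\partial \sigma$ of a simplex $\sigma$ on $[n]$, the Stanley--Reisner ring is $K[x_1, \ldots, x_n]/(x_1 \cdots x_n)$; this is a well-known complete intersection (hence Cohen--Macaulay), and after modding out by a generic l.s.o.p.\ one obtains an artinian algebra with $h$-vector $(1, 1, \ldots, 1)$ whose strong Lefschetz property follows from, e.g., the fact that a generic linear form has the desired multiplication behavior on the artinian reduction (alternatively it is a graded complete intersection of quadrics-of-distinct-linear-forms type which has the SLP).

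For the inductive step, suppose $\Gamma$ is a $(d-1)$-dimensional strongly edge decomposable complex on $[n]$ which is not in the base case. By Definition 1.1 there exists $\{i,j\} \in \Gamma$ such that $\Gamma$ satisfies the Link condition with respect to $\{i,j\}$ and both $\contra{ij}$ and $\lk{ij}$ are strongly edge decomposable. The contraction $\contra{ij}$ lives on $[n] \setminus \{i\}$, which has fewer vertices, and the link $\lk{ij}$ lives on a subset of $[n]$ but has dimension $d-3 < d-1$ (by Lemma \ref{suika}). In either case the inductive hypothesis applies, so both $\contra{ij}$ and $\lk{ij}$ are Cohen--Macaulay and possess the strong Lefschetz property. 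Moreover, Lemma \ref{suika} supplies the dimension hypotheses $\dim \contra{ij} = d-1$ and $\dim \lk{ij} = d-3$ required by Proposition \ref{4no5}. Applying Proposition \ref{4no5} yields the strong Lefschetz property for $\Gamma$, and applying Remark \ref{4no6} yields that $\Gamma$ is Cohen--Macaulay.

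I do not expect any real obstacle here: the content of the corollary is essentially a direct unwinding of the recursive definition, with all the hard algebraic work already packaged into Proposition \ref{4no5} and Remark \ref{4no6}. The only point that deserves care is making sure the induction is well-founded (which it is, since at each recursive step either the number of vertices strictly decreases via the contraction or the dimension strictly decreases via the link), and verifying the base case explicitly so that the induction has somewhere to start.
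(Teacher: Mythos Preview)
Your proposal is correct and follows essentially the same inductive scheme as the paper: base case for boundaries of simplices (and $\{\emptyset\}$), inductive step via Lemma~\ref{suika} and Proposition~\ref{4no5}. The paper's treatment of the base case is slightly slicker---it observes directly that $K[\partial\sigma]/(\theta_1,\dots,\theta_d)K[\partial\sigma]\cong K[x_1]/(x_1^{d+1})$ for any l.s.o.p., from which the strong Lefschetz property is immediate---and it does not invoke Remark~\ref{4no6} separately, since in this paper the strong Lefschetz property already presupposes Cohen--Macaulayness.
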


\begin{proof}
Let $\Gamma$ be a $(d-1)$-dimensional strongly edge decomposable complex on $[n]$.
If $\Gamma$ is the boundary of a simplex then
$\Gamma$ has the strong Lefschetz property
since $K[\Gamma]/(\theta_1,\dots,\theta_{d})K[\Gamma] \cong K[x_1]/(x_1^{d+1})$
for any l.s.o.p.\ $\theta_1,\dots,\theta_{d}$ of $K[\Gamma]$.
If $\Gamma$ is not the boundary of a simplex,
then the statement follows from Proposition \ref{4no5} and Lemma \ref{suika}
inductively.
\end{proof}

\begin{rem}
Let $\Gamma$ be a triangulated PL-sphere.
Then the link of $\Gamma$ with respect to any face $F \in \Gamma$ is again a PL-sphere.
Also, it was proved in \cite[Theorem 1.4]{N2} that
if $\Gamma$ satisfies the Link condition with respect to $\{i,j\} \in \Gamma$ then $\contra {ij}$ is also a PL-sphere.
These facts and Proposition \ref{4no5} may help to study the strong Lefschetz property of PL-spheres.
\end{rem}

\section{Algebraic shifting}

First, we recall the basics of algebraic shifting.
For further details on algebraic shifting
see the survey articles \cite{H} and \cite{K}.
Let $S=K[x_1,\dots,x_n]$ be a polynomial ring
over an infinite field $K$ with each $\deg x_i=1$
and $E= \bigwedge \langle e_1,\dots,e_n \rangle$ the exterior algebra
over $K$ with each $\deg e_i =1$.
Let $R$ be either $S$ or $E$
and let $\GL$ be the general linear group with coefficients in $K$.
Suppose that $\GL$ acts on $R$ as the group of graded $K$-algebra
automorphisms.
For a homogeneous ideal $I$ of $R$,
we write $\init (I)$ for the initial ideal of $I$ w.r.t.\
the reverse lexicographic order induced by $1>2>\cdots>n$.
The \textit{generic initial ideal} of a homogeneous ideal
$I \subset R$ is $\Gin(I)=\init (\varphi(I))$ for a generic choice of $\varphi \in \GL$ (see \cite[\S 15.9]{E} or \cite{H}).

\subsection*{Exterior algebraic shifting}
Let $\Gamma$ be a simplicial complex on $[n]$.
For a subset $F = \{ i_1,\dots,i_k\} \subset [n]$ with
$i_1 < \cdots < i_k$,
the element $e_F = e_{i_1} \wedge \cdots \wedge e_{i_k} \in E$
is called a monomial of $E$ of degree $k$.
The exterior face ideal $J_\Gamma$ of $\Gamma$
is the ideal of $E$ generated by all monomials $e_F \in E$ with $F \not \in \Gamma$.
Let $\sset$ be the set of simplicial complexes on $[n]$.
\textit{Exterior algebraic shifting} $\dele {\mbox{-}} : \sset \to \sset$
is the map defined by
$$J_{\dele \Gamma} = \Gin (J_\Gamma).$$
The simplicial complex $\dele \Gamma$ is called the
\textit{exterior algebraic shifted complex of $\Gamma$}.
Note that $\dele \Gamma$ may depend on the characteristic of the base field $K$.

\subsection*{Symmetric algebraic shifting}
Suppose $\mathrm{char}(K)=0$.
Let $\Gamma$ be a simplicial complex on $[n]$
and $I_\Gamma \subset S$ its Stanley--Reisner ideal.
Let $\M$ be the set of monomials on infinitely many variables
$x_1,x_2,\dots.$
The \textit{squarefree operation} $\Phi : \M \to \M $ is the map defined by
\begin{eqnarray*}
\Phi(x_{i_1} x_{i_2}x_{i_3} \cdots x_{i_k})
= x_{i_1} x_{i_2 +1} x_{i_3 +2} \cdots x_{i_k + k-1},
\end{eqnarray*}
where $i_1 \leq i_2 \leq  \cdots \leq i_k$.
If $I \subset S$ is a monomial ideal satisfying
$\Phi(u) \in S$ for all $u \in G(I)$,
%where $G(I)$ is the set of minimal monomial generators of $I$,
we write $\Phi(I)$ for the monomial ideal generated by
$\{ \Phi(u): u \in G(I)\}$.
It is known that if $I \subset S$ is a squarefree monomial ideal
then $\Phi(u) \in S$ for all $u \in G(\Gin(I))$
(\cite[Lemma 1.1]{AHHshifting}).
\textit{Symmetric algebraic shifting} $\dels {\mbox{-}} : \sset \to \sset$
is the map defined by
$$J_{\dels \Gamma} = \Phi \big( \Gin(I_\Gamma) \big).$$
The simplicial complex $\dels \Gamma$ is called the
\textit{symmetric algebraic shifted complex of $\Gamma$}.
\medskip

Let $\Gamma$ and $\Sigma$ be simplicial complexes on $[n]$
and let $\del{\mbox{-}}$ be either $\dele {\mbox{-}}$ or $\dels {\mbox{-}}$.
Algebraic shifting satisfies the following properties
(see \cite{H} and \cite{K}).

\begin{itemize}
\item[($S_1$)]
$\del \Gamma$ is shifted;
% in other words,
%$\Shift_{ij}(\del \Gamma)=\del \Gamma$ for all $1 \leq i < j \leq n$;
\item[($S_2$)]
If $\Gamma$ is shifted then $\del \Gamma = \Gamma$;
\item[($S_3$)]
$f ( \del \Gamma )= f(\Gamma)$;
\item[($S_4$)]
If $\Sigma \subset \Gamma$ then $\del \Sigma \subset \del \Gamma$.
\end{itemize}
We need the following facts.
(The first one easily follows from \cite[Corollary 4.4]{N1},
and the second one was shown in \cite[Corollary 5.4]{N1}.)

\begin{lemma} \label{0no1}
Let $\Gamma$ be a simplicial complex on $\{m,m+1,\dots,n\}$ with $1 \leq m \leq n$
and $J_\Gamma \subset \bigwedge \langle e_m,\dots,e_n \rangle$ the exterior face ideal of $\Gamma$.
Let $J_\Gamma+(e_1,\dots,e_{m-1})$ be the ideal of $E$ generated by
$G(J_\Gamma)$ and $e_1,\dots,e_{m-1}$.
%$\bigwedge \langle e_m,e_{m+1},\dots,e_n \rangle$ with $1 \leq m \leq n$.
Then 
$$ \Gin \big( J_\Gamma+(e_1,\dots,e_{m-1}) \big)=\Gin(J_\Gamma)+(e_1,\dots,e_{m-1}).$$
\end{lemma}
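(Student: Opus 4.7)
The plan is to reduce the computation to the exterior algebra on $\{e_m,\dots,e_n\}$ via a two-step change of basis, so that $\Gin(J_\Gamma)$ in the smaller algebra appears naturally when we pass to the quotient $E/(e_1,\dots,e_{m-1})$.

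Choose a generic $\varphi\in\GL$ and partition its matrix into blocks $A_{11},A_{12},A_{21},A_{22}$ with $A_{11}$ of size $(m-1)\times(m-1)$. By genericity $A_{11}$ is invertible, so after invertible row combinations of $\varphi(e_1),\dots,\varphi(e_{m-1})$ I can replace these generators with linear forms $\ell_i=e_i+h_i$ where $h_i\in\langle e_m,\dots,e_n\rangle$. Next introduce a second automorphism $\psi$ of $E$ defined by $\psi(e_i)=e_i-h_i$ for $i<m$ and $\psi(e_j)=e_j$ for $j\ge m$, so that $\psi(\ell_i)=e_i$ for $i<m$. Because each $h_i$ involves only variables strictly smaller than $e_i$ in the reverse lexicographic order, a short expansion of $\psi(e_F)$ shows its leading term is $e_F$ itself, whence $\init(\psi(I))=\init(I)$ for every homogeneous ideal $I\subset E$. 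Applying $\psi\varphi$ in place of $\varphi$ therefore leaves the initial ideal unchanged and brings the ideal to the form $\psi\varphi(J_\Gamma)+(e_1,\dots,e_{m-1})$.

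Now I would use the elementary identity $\init(I+(e_1,\dots,e_{m-1}))=(e_1,\dots,e_{m-1})+\init(\bar I)$, valid for any monomial order, where $\bar I\subset\bar E:=E/(e_1,\dots,e_{m-1})\cong\bigwedge\langle e_m,\dots,e_n\rangle$ denotes the image of $I$; the reason is that modulo $(e_1,\dots,e_{m-1})$ any element can be rewritten to involve only monomials in $e_m,\dots,e_n$ without disturbing any leading term that already avoids those variables. Unwinding the definitions, the map $\bigwedge\langle e_m,\dots,e_n\rangle\to\bar E$ obtained by applying $\psi\varphi$ and then projecting mod $(e_1,\dots,e_{m-1})$ is the $K$-algebra isomorphism determined by the Schur complement $\theta:=A_{22}-A_{21}A_{11}^{-1}A_{12}$, so the image of $\psi\varphi(J_\Gamma)$ in $\bar E$ equals $\theta(J_\Gamma)$. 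Since block-diagonal matrices already realize every element of $GL_{n-m+1}(K)$ as a Schur complement, a generic $\varphi$ yields a generic $\theta$, and hence $\init(\theta(J_\Gamma))=\Gin(J_\Gamma)$ in the smaller algebra, giving the claimed identity.

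The main subtlety is the correct identification of the induced action on the quotient algebra as the Schur complement of $\varphi$, together with the check that it inherits enough genericity from $\varphi$ to realize $\Gin(J_\Gamma)$; once that identification is made, the proof reduces to the two elementary observations above about how reverse lexicographic initial ideals interact with a unipotent change of basis and with adding the largest variables of the order.
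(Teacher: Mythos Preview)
Your argument is correct. The paper does not actually prove this lemma; it simply records that it ``easily follows from \cite[Corollary~4.4]{N1}'' (Nevo's paper on algebraic shifting and basic constructions), so there is no in-paper proof to compare against. Your route---normalising a generic $\varphi$ by a unipotent $\psi$ so that $\psi\varphi$ carries $(e_1,\dots,e_{m-1})$ to itself, and then identifying the induced automorphism on the quotient $\bar E$ with the Schur complement $A_{22}-A_{21}A_{11}^{-1}A_{12}$---gives a clean self-contained argument. Both auxiliary facts you invoke are valid: a change of basis sending each $e_i$ to $e_i$ plus a linear form in strictly smaller variables preserves every revlex leading term (hence every initial ideal), and the identity $\init\big(I+(e_1,\dots,e_{m-1})\big)=(e_1,\dots,e_{m-1})+\init(\bar I)$ holds because any $f$ with $\init(f)=e_G$, $G\subset\{m,\dots,n\}$, keeps its $e_G$-coefficient after reduction modulo $(e_1,\dots,e_{m-1})$. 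Finally, your genericity claim for the Schur complement is justified: the Schur-complement map is a morphism surjecting onto $GL_{n-m+1}(K)$ (via block-diagonal matrices), so the preimage of the generic locus for $J_\Gamma$ is nonempty open and meets the generic locus for $J_\Gamma+(e_1,\dots,e_{m-1})$ by irreducibility of $\GL$.
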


\begin{lemma} \label{0no2}
Let $\Gamma$ be a simplicial complex on $[n]$. Then
$$\dele {\{n+1\} * \Gamma} = \{n+1\} *\dele \Gamma.$$
\end{lemma}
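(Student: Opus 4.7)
My plan is to translate the statement into an identity of generic initial ideals and then exploit a key order-theoretic feature of the reverse lexicographic order. Since $\{n+1\}*\Gamma$ is a cone with apex $n+1$, its minimal non-faces coincide with those of $\Gamma$, so $J_{\{n+1\}*\Gamma}=J_\Gamma\cdot E'$, where $E'=\bigwedge\langle e_1,\ldots,e_{n+1}\rangle$ and $J_\Gamma\subset E=\bigwedge\langle e_1,\ldots,e_n\rangle$. Likewise $J_{\{n+1\}*\dele\Gamma}=\Gin(J_\Gamma)\cdot E'$. Thus the lemma becomes the algebraic identity
$$\Gin_{E'}(J_\Gamma\cdot E')=\Gin_E(J_\Gamma)\cdot E',$$
and this is what I would prove.

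The central observation I would exploit is that in the reverse lex order on $E'$ induced by $1>\cdots>n+1$, every monomial $e_F$ with $F\subset[n]$ strictly exceeds any degree-equal monomial containing the index $n+1$ (the largest element of the symmetric difference is $n+1$, which lies in the ``bad'' set). Hence, for any homogeneous $K$-subspace $V\subset E'$, if $\pi\colon E'\to E$ denotes the projection killing the $e_{n+1}$-component and $V_1=\{v\in E:e_{n+1}\wedge v\in V\}$, one has the clean splitting $\init_{E'}(V)=\init_E(\pi(V))\oplus e_{n+1}\cdot\init_E(V_1)$, together with $\dim V=\dim\pi(V)+\dim V_1$. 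I would then apply this to $V=\varphi'(J_\Gamma\cdot E')$ for a generic $\varphi'\in GL_{n+1}(K)$: writing $\varphi'(e_i)=\widetilde{A}(e_i)+\alpha_i e_{n+1}$ for $i\leq n$ and $\varphi'(e_{n+1})=u+c\,e_{n+1}$ with $u\in E_1$ and $c\in K$, the top-left block $\widetilde{A}$ is generic in $GL_n(K)$, so $\init_E(\widetilde{A}(J_\Gamma))=\Gin_E(J_\Gamma)$. Using the $K$-decomposition $J_\Gamma\cdot E'=J_\Gamma\oplus e_{n+1}J_\Gamma$, a direct expansion gives $\pi(V)=\widetilde{A}(J_\Gamma)+u\wedge\widetilde{A}(J_\Gamma)=\widetilde{A}(J_\Gamma)$ (the latter being an ideal of $E$), so the $E$-part of $\init_{E'}(V)$ is exactly $\Gin_E(J_\Gamma)$.

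For the $e_{n+1}$-part, $\dim V_1=\dim J_\Gamma$ by the short exact sequence $0\to V_1\to V\to\pi(V)\to 0$, so it suffices to exhibit $\dim J_\Gamma$ elements of $V_1$ whose initial terms span $\Gin_E(J_\Gamma)$. For each $g\in J_\Gamma$, I would take the element $\varphi'(e_{n+1}\wedge g)-\varphi'(\widetilde{A}^{-1}(u)\wedge g)\in V$, which lies in $e_{n+1}E'$ (its $E$-component cancels by the computation of $\pi$), and whose $e_{n+1}$-coefficient equals $c\,\widetilde{A}(g)$ plus further terms independent of the parameter $c$. Letting $g$ range over a basis of $J_\Gamma$ whose $\widetilde{A}$-images have initial terms spanning $\Gin_E(J_\Gamma)$, the genericity of $c=a_{n+1,n+1}$ would force the residues to have initial term $\init_E(\widetilde{A}(g))$, yielding $\init_E(V_1)\supseteq\Gin_E(J_\Gamma)$, and hence equality by dimension count. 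The hard part is precisely this last verification: one must ensure that the ``correction'' contributions (arising from the $\alpha_i$-dependent interior products hidden in the off-diagonal components of $\varphi'$) do not produce monomials strictly larger than $\init_E(\widetilde{A}(g))$, and this is handled by treating $c$ as a free generic scalar independent of the remaining entries of $\varphi'$.
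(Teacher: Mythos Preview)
The paper does not prove this lemma at all; it merely cites \cite[Corollary~5.4]{N1}. Your direct argument, reducing the claim to the identity $\Gin_{E'}(J_\Gamma E')=\Gin_E(J_\Gamma)\,E'$ and then exploiting the reverse-lex splitting $\init_{E'}(V)=\init_E(\pi(V))\oplus e_{n+1}\cdot\init_E(V_1)$, is correct and self-contained.

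One simplification: the caution in your last paragraph about ``correction contributions'' is unnecessary, because the element you construct is already a scalar multiple of $e_{n+1}\wedge\widetilde A(g)$. Indeed, writing $\varphi'(\widetilde A^{-1}(u))=u+\gamma e_{n+1}$ for the appropriate scalar $\gamma$, one has
\[
\varphi'(e_{n+1}\wedge g)-\varphi'\big(\widetilde A^{-1}(u)\wedge g\big)
=(c-\gamma)\,e_{n+1}\wedge\varphi'(g)
=(c-\gamma)\,e_{n+1}\wedge\widetilde A(g),
\]
since $e_{n+1}\wedge\varphi'(g)=e_{n+1}\wedge\pi(\varphi'(g))$. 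Moreover $c-\gamma\neq0$ as soon as $\widetilde A$ is invertible: otherwise the nonzero vector $e_{n+1}-\widetilde A^{-1}(u)$ would lie in $\ker\varphi'$. Hence $\widetilde A(J_\Gamma)\subseteq V_1$, and your dimension count gives $V_1=\widetilde A(J_\Gamma)$ on the nose; no separate genericity of $c$ is needed.
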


Note that Lemmas \ref{0no1} and \ref{0no2} are also true for symmetric algebraic shifting
(see \cite[Proposition 3.1]{CR} and \cite[Theorem 3.7]{BNT}).

In the rest of this section,
we will show that any $(d-1)$-dimensional strongly edge decomposable complex $\Gamma$ on $[n]$
satisfies $\del \Gamma \subset \dels \cyclic$.
%Kalai and Sarkaria's conjecture (\ref{0-c}) for strongly edge decomposable complexes.
We first recall the structure of $\dels \cyclic$.
Fix integers $n>d>0$.
For integers $i,j \in [n]$,
we write $[i] =\{1,2,\dots,i\}$
and write  $[i,j]=\{i,i+1,\dots,j-1,j\}$ if $i \leq j$ and $[i,j] = \emptyset$ if $i>j$.
A $d$-subset $F$ of $[n]$ is said to be \textit{admissible}
if $n-k \not \in F$ implies $[n-d+k,n-k-1] \subset F$.
For $i=0,1,\dots, \lfloor \frac d 2 \rfloor$, let
$$W_i(n,d)= \big\{ \big([n-d+i,n] \setminus \{n-d+i\} \big)\cup F:
F \subset [n-d+i-1],\ |F|=i \big\}$$
and
$$W_{d-i}(n,d)= \big\{ \big([n-d+i,n] \setminus \{n-i\} \big)\cup F:
F \subset [n-d+i-1],\ |F|=i \big\}.$$
Then it is easy to see that $\bigcup_{i=0}^d W_i(n,d)$ is the set of all
admissible $d$-subsets of $[n]$.
Let $\Delta(n,d)$ be the simplicial complex generated by
$\bigcup_{i=0}^d W_i(n,d)$
and let $\Delta(n,0)=\{ \emptyset \}$.
The following fact is known (see \cite[p.\ 405]{Kdisc}).

\begin{theorem}[Kalai]
Let $n>d>0$ be integers and let $\cyclic$ be the boundary complex of a cyclic $d$-polytope with $n$ vertices. Then
$$\Delta^s \big( \cyclic \big) = \Delta(n,d).$$
\end{theorem}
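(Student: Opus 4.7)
The plan is to establish $\Delta^s(C(n,d)) = \Delta(n,d)$ by comparing the two complexes as shifted complexes with matching $f$-vector, and then invoking an extremality argument to pin down the match. Note that the statement is really a theorem of Kalai, so I would treat the natural combinatorial construction $\Delta(n,d)$ as a candidate and try to match it with $\Delta^s(C(n,d))$.

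First, I would verify that $\Delta(n,d)$ is itself a pure shifted simplicial complex of dimension $d-1$ on $[n]$. Shiftedness is checked directly on the generating family $\bigcup_{i=0}^d W_i(n,d)$: if an admissible $d$-subset $F$ has an element $k$ replaced by $l>k$, the resulting set is again admissible (the admissibility condition ``$n-k\notin F \Rightarrow [n-d+k,n-k-1]\subset F$'' is manifestly preserved by raising indices). Purity is immediate since the generators have size $d$. I would then compute $|W_i(n,d)| = \binom{n-d+i-1}{i}$ for $0\le i\le \lfloor d/2 \rfloor$, and the analogous count for $W_{d-i}(n,d)$, and deduce that the $h$-vector of $\Delta(n,d)$ is $h_i = \binom{n-d+i-1}{i}$ for $i\le \lfloor d/2\rfloor$, extended symmetrically.

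Next I would recall the classical formula $h_i(C(n,d)) = \binom{n-d+i-1}{i}$ for $0\le i\le \lfloor d/2\rfloor$, with the remaining entries determined by the Dehn--Sommerville equations. Combining with the previous step, $\Delta(n,d)$ and $C(n,d)$ have identical $h$-vectors and hence identical $f$-vectors. By property $(S_3)$, so does $\Delta^s(C(n,d))$. Thus $\Delta^s(C(n,d))$ and $\Delta(n,d)$ are two shifted simplicial complexes on $[n]$ with the same $f$-vector, and purity of $\Delta^s(C(n,d))$ follows from the fact that $C(n,d)$ is Cohen--Macaulay (algebraic shifting preserves Cohen--Macaulayness and purity for CM complexes).

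The remaining hurdle, and the one I expect to be the hardest, is uniqueness: agreeing $f$-vectors is not by itself enough to force equality of shifted complexes. To close the gap I would exploit the extremality of the cyclic polytope in the Upper Bound Theorem, translated into an extremality property of $\Gin(I_{C(n,d)})$: among all $d$-dimensional shifted simplicial complexes with $n$ vertices, $\Delta(n,d)$ is the unique one whose facets are precisely the ``smallest possible'' admissible $d$-subsets in the componentwise partial order defining shiftedness. Equivalently, in each degree the rev-lex segment realizing the Gorenstein $h$-vector is precisely the set of monomials $e_F$ with $F\in W_i(n,d)$. Verifying that the generators of $\Gin(I_{C(n,d)})$ form rev-lex segments in every degree (using that $C(n,d)$ has the strong Lefschetz property, so one can apply rev-lex compression arguments degree by degree, together with the $h$-vector symmetry) and identifying those segments with the complements of the $W_i(n,d)$ would complete the proof. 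This rev-lex identification is the step that requires the most care: once established, the two shifted complexes must coincide facet by facet.
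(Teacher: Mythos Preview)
The paper does not prove this theorem; it is quoted as a known result of Kalai with a citation to \cite[p.\ 405]{Kdisc}. So there is no in-paper argument to compare your attempt against.

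Your outline is sound through the $f$-vector match: $\Delta(n,d)$ is a pure shifted $(d-1)$-complex on $[n]$ with the same $h$-vector as $C(n,d)$, and $\Delta^s(C(n,d))$ is pure shifted with that same $h$-vector by $(S_1)$, $(S_3)$, and Cohen--Macaulayness. (Your shiftedness claim---that raising an index in an admissible $d$-set yields another admissible $d$-set---is in fact correct, though it requires a short case analysis on the position of the raised index relative to the interval $[n-d+k,n-k-1]$ rather than being ``manifest''.)

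The genuine gap is the final uniqueness step. The assertion that the strong Lefschetz property forces the generators of $\Gin(I_{C(n,d)})$ to form rev-lex segments in each degree is not justified: in characteristic $0$, SLP only guarantees that $\Gin$ is strongly stable, and strongly stable ideals are in general far from segment ideals; ``rev-lex compression'' is a combinatorial-shifting device with no direct analogue on the algebraic side. The clean repair is already available in the paper's toolbox. Stanley's hard Lefschetz theorem gives the SLP for $C(n,d)$; then Lemma~\ref{5no70} (Kalai's Theorem~6.4 in \cite{Kdisc}, whose proof analyses the action of the Lefschetz element on $\Gin$ directly and does not presuppose the present computation) yields the \emph{containment} $\Delta^s(C(n,d)) \subset \Delta(n,d)$. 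Your $f$-vector equality then upgrades this containment to equality in one line. This replaces the unproven segment claim by a containment-plus-counting argument and closes the proof.
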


Note that the definition of $\Delta(n,d)$ is different from that of
\cite{Kdisc} since we reverse the ordering of the vertices.
Also, it was shown in \cite{Mdisc} that $\dele \cyclic = \dels \cyclic$.

To study Kalai and Sarkaria's conjecture,
we consider nongeneric algebraic shifting.
%it is useful to consider non generic algebraic shifting.
Let $\Gamma$ be a simplicial complex on $[n]$.
For any $\varphi \in \GL$,
we write $\Delta_\varphi(\Gamma)$ for the simplicial complex defined by
$$J_{\Delta_\varphi(\Gamma)}= \init \big( \varphi(J_\Gamma) \big).$$
The next fact can be proved in the same way as
\cite[Proposition 2.4]{Mdisc} by using \cite[Lemma 1.5]{Mdisc}.

\begin{lemma} \label{3no2}
Let $\Gamma$ be a $(d-1)$-dimensional simplicial complex on $[n]$.
If $\dele {\Delta_\varphi (\Gamma)} \subset \Delta(n,d)$ for some
$\varphi \in \GL$ then
$\dele \Gamma \subset \Delta(n,d)$.
\end{lemma}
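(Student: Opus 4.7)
The plan is to establish the stronger equality $\dele{\Delta_\varphi(\Gamma)} = \dele{\Gamma}$ for every $\varphi \in \GL$, from which the lemma is immediate. Translating through exterior face ideals, this identity reads $\Gin(\init(\varphi(J_\Gamma))) = \Gin(J_\Gamma)$. Since $\Gin$ is invariant under every linear change of coordinates --- if $\psi \in \GL$ is generic then so is $\psi\varphi$, and $\Gin$ is realized as $\init$ of a generic transform --- this reduces to the ideal-theoretic identity
$$\Gin(\init(I)) = \Gin(I)$$
for an arbitrary homogeneous ideal $I$ of $E$, applied to $I = \varphi(J_\Gamma)$.

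The key step is a Gr\"obner-deformation argument. I would choose a weight $\omega$ refining the reverse lexicographic order on generators of $I$ and of $\psi(I)$ for generic $\psi$, and form the flat $K[t]$-family in $E[t]$ that $\omega$-homogenizes $I$, whose fibers at $t=1$ and $t=0$ are $I$ and $\init(I)$ respectively. Applying a generic $\psi \in \GL$ fibrewise produces a second flat family connecting $\psi(I)$ to $\psi(\init(I))$. Lemma 1.5 of \cite{Mdisc} --- a semicontinuity principle asserting that, in such a family, the monomial initial ideals of the two fibers coincide once $\psi$ is generic --- then yields $\init(\psi(\init(I))) = \init(\psi(I))$. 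Genericity of $\psi$ rewrites this as $\Gin(\init(I)) = \Gin(I)$, completing the reduction and hence the lemma.

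The main obstacle is the careful transfer of Lemma 1.5 of \cite{Mdisc} and the supporting Gr\"obner-deformation machinery from the symmetric-algebra setting of \cite[Proposition 2.4]{Mdisc} to the exterior algebra $E$: one must set up $\omega$-homogenization over $E[t]$ correctly (accounting for the anti-commutative sign conventions), verify flatness of the family, and confirm that the semicontinuity statement passes to the wedge-monomial setting. Since exterior algebraic shifting and the reverse-lex initial ideal in $E$ run in close parallel to their counterparts in $S$, one expects only cosmetic replacements --- $S \rightsquigarrow E$, $x_i \rightsquigarrow e_i$, squarefree monomials $\rightsquigarrow$ wedge monomials --- rather than any substantive new difficulty.
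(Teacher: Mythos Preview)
Your stronger equality $\dele{\Delta_\varphi(\Gamma)} = \dele{\Gamma}$, equivalently $\Gin(\init(I)) = \Gin(I)$ in $E$, is false in general, so the argument has a genuine gap. Indeed, since $\Shift_{ij}(\Gamma) = \Delta_{\varphi_{ij}}(\Gamma)$ by Lemma~\ref{3no3}, your equality would force $\dele{\Shift_{ij}(\Gamma)} = \dele{\Gamma}$ for every $i<j$; iterating, \emph{every} sequence of combinatorial shifts of $\Gamma$ would terminate at $\dele{\Gamma}$. But combinatorial shifting is a purely set-theoretic operation, independent of the base field, whereas $\dele{\Gamma}$ can depend on $\mathrm{char}(K)$ --- already the $6$-vertex triangulation of the real projective plane has different exterior shifts in characteristics $0$ and $2$. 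Hence in at least one characteristic some $\varphi_{ij}$ gives $\dele{\Delta_{\varphi_{ij}}(\Gamma)} \ne \dele{\Gamma}$; and in fact explicit complexes are known (see \cite{K}) for which a combinatorial shift differs from $\dele{\Gamma}$ even in characteristic $0$.

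The misstep is in your reading of \cite[Lemma~1.5]{Mdisc}: a semicontinuity principle for initial ideals in a flat one-parameter family yields only a one-sided comparison between the generic and special fibers, not coincidence. What one actually obtains is an inequality between $\Gin(J_\Gamma)$ and $\Gin\big(\init(\varphi(J_\Gamma))\big)$ in a suitable partial order on monomial ideals (equivalently, on shifted complexes) with the same Hilbert function. The argument of \cite[Proposition~2.4]{Mdisc}, which the paper is asking you to transport from $S$ to $E$, then exploits the special combinatorial structure of $\Delta(n,d)$ --- it is closed, in the appropriate direction, under that partial order --- to conclude that $\dele{\Delta_\varphi(\Gamma)} \subset \Delta(n,d)$ forces $\dele{\Gamma} \subset \Delta(n,d)$, without the two shifted complexes having to be equal. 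Your Gr\"obner-deformation scaffolding is the right starting point; what needs to change is the conclusion you extract from it (an inequality, not an identity) together with a separate appeal to the shape of $\Delta(n,d)$.
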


%Actually, \cite[Proposition 2.4]{Mdisc} only consider the special case.
%However, we can prove Lemma \ref{3no2} by using \cite[Lemma 1.5]{Mdisc}
%in the same way as the proof of \cite[Proposition 2.4]{Mdisc}.

For integers $1 \leq i<j \leq n$,
let $\varphi_{ij} \in \GL$ be the graded $K$-algebra automorphism of $E$ induced by
$\varphi_{ij}(e_k)=e_k$ for $k \ne j$ and $\varphi_{ij}(e_j)=e_i + e_j$.
We also require the following
(see \cite[Lemma 8.3]{H}).

\begin{lemma} \label{3no3}
Let $ 1 \leq i < j \leq n$ be integers.
Then, for any simplicial complex $\Gamma$ on $[n]$,
one has
$${\Shift_{ij}(\Gamma)}= \Delta_{\varphi_{ij}} (\Gamma).$$
\end{lemma}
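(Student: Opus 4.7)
The goal is to establish $J_{\Shift_{ij}(\Gamma)} = \init(\varphi_{ij}(J_\Gamma))$, which by the definition of $\Delta_{\varphi_{ij}}$ yields the desired equality of simplicial complexes. My plan is to prove one inclusion explicitly and then upgrade it to equality via a Hilbert-function count. Since $\varphi_{ij}$ is a graded $K$-algebra automorphism, and passage to an initial ideal preserves Hilbert functions, the three ideals $J_\Gamma$, $\varphi_{ij}(J_\Gamma)$ and $\init(\varphi_{ij}(J_\Gamma))$ share the same Hilbert function. Combined with $f(\Gamma)=f(\Shift_{ij}(\Gamma))$ (recorded in \S 2), this forces $J_{\Shift_{ij}(\Gamma)}$ and $\init(\varphi_{ij}(J_\Gamma))$ to have equal Hilbert functions, so it is enough to check the inclusion $J_{\Shift_{ij}(\Gamma)} \subseteq \init(\varphi_{ij}(J_\Gamma))$.

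The computational heart is to evaluate $\varphi_{ij}(e_F)$ for each $F \subseteq [n]$. Three cases occur: if $j \notin F$, then $\varphi_{ij}(e_F)=e_F$ trivially; if $\{i,j\} \subseteq F$, then again $\varphi_{ij}(e_F)=e_F$ because the new $e_i$ contributed by $\varphi_{ij}(e_j)=e_i+e_j$ is killed by the existing $e_i$ factor via $e_i \wedge e_i = 0$; and if $j \in F$, $i \notin F$, then setting $F'=(F\setminus\{j\})\cup\{i\}$ one has $\varphi_{ij}(e_F)=\alpha e_F + \beta e_{F'}$ with $\alpha,\beta \in \{\pm 1\}$. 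In the reverse lexicographic order induced by $1>2>\cdots>n$, the monomial $e_{F'}$ exceeds $e_F$ (the exponent vectors differ only at positions $i$ and $j$, and $F'$ carries the smaller index), so $\init(\varphi_{ij}(e_F))=\pm e_{F'}$.

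To verify the inclusion, I take an arbitrary $F$ with $e_F \in J_{\Shift_{ij}(\Gamma)}$ and split into four cases according to which of $i,j$ belongs to $F$. Unwinding the definition of $C_{ij}$ in each case shows: if $\{i,j\} \cap F \in \{\emptyset,\{i\},\{i,j\}\}$, then either $F \notin \Gamma$ already (so $\varphi_{ij}(e_F)=e_F$ directly shows $e_F \in \init(\varphi_{ij}(J_\Gamma))$), or else $(F \setminus \{i\}) \cup \{j\} \notin \Gamma$, in which case the monomial computation above identifies $e_F$ as the initial term of $\varphi_{ij}(e_{(F \setminus \{i\}) \cup \{j\}})$. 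The genuinely subtle case is $i \notin F$, $j \in F$: here $F \notin \Shift_{ij}(\Gamma)$ forces both $F \notin \Gamma$ and $F' := (F\setminus\{j\})\cup\{i\} \notin \Gamma$, so both $e_F$ and $e_{F'}$ lie in $J_\Gamma$; then $\alpha e_F - \alpha\beta e_{F'} \in J_\Gamma$ and a direct check gives $\varphi_{ij}(\alpha e_F - \alpha\beta e_{F'}) = e_F$, exhibiting $e_F$ itself in $\varphi_{ij}(J_\Gamma) \subseteq \init(\varphi_{ij}(J_\Gamma))$.

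The only real obstacle is the sign bookkeeping in the exterior algebra and the verification in the subtle case; conceptually the argument is a cleaner exterior-algebra analogue of Lemma \ref{4no3}, and the extra hypothesis appearing there (no generator of $I_\Gamma$ divisible by $x_ix_j$) becomes automatic here because $(e_i+e_j) \wedge (e_i+e_j)=0$ prevents exactly the pathological cross terms that necessitated that hypothesis in the polynomial setting.
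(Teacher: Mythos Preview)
Your argument is correct. The paper does not give its own proof of this lemma but simply cites \cite[Lemma 8.3]{H}; your proof is exactly the natural one and, as you observe, is the exterior-algebra analogue of the paper's Lemma~\ref{4no3}, with the hypothesis ``no generator divisible by $x_ix_j$'' rendered unnecessary by $e_i\wedge e_i=0$. One small simplification: in your expansion $\varphi_{ij}(e_F)=\alpha e_F+\beta e_{F'}$ one always has $\alpha=1$, so in the subtle case the witness is simply $e_F-\beta e_{F'}$, whose image under $\varphi_{ij}$ is $e_F$ on the nose.
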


Let $V \subset [n]$ and let $\Gamma$ be a simplicial complex on $V$.
Consider the exterior face ideal $J_\Gamma$ in $\bigwedge \langle e_k: k \in V \rangle$ and define the simplicial complex $\dele \Gamma$ on $V$ by $J_{\dele \Gamma} = \Gin (J_\Gamma)$,
and define the symmetric algebraic shifted complex $\dels \Gamma$ on $V$ similarly.
Let $C(V,d)$ be the boundary complex of a cyclic $d$-polytope
with the vertex set $V$.
Set
$\Delta(V,d)= \dels {C(V,d)}$ for $d >0$
and $\Delta(V,0)= \{ \emptyset \}$.

\begin{lemma} \label{3no4}
Let $n>d \geq 0$ be integers.
Then
$\Delta([2,n],d) \subset \Delta(n,d)$
and $\{1,n+1\}*\Delta([2,n],d) \subset \Delta(n+1,d+2)$.
\end{lemma}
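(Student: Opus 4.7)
The plan is to argue combinatorially using the explicit description of $\Delta(n,d)$ via its generators $\bigcup_{i=0}^{\lfloor d/2 \rfloor}(W_i(n,d) \cup W_{d-i}(n,d))$ displayed just before Kalai's theorem. Applying those same formulas to $\Delta(n-1,d)$ and then relabeling the vertex set by $k \mapsto k+1$ (which identifies $\Delta(n-1,d)$ with $\Delta([2,n],d)$), I see that every facet $G$ of $\Delta([2,n],d)$ takes one of the two forms $G = [n-d+i+1,n] \cup F'$ or $G = ([n-d+i,n] \setminus \{n-i\}) \cup F'$, with $F' \subset [2, n-d+i-1]$ and $|F'|=i$, for some $0 \leq i \leq \lfloor d/2 \rfloor$.

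For the first inclusion $\Delta([2,n],d) \subset \Delta(n,d)$, the only difference between these $G$ and members of $W_i(n,d)$ or $W_{d-i}(n,d)$ is that the latter allow the free set to range over all of $[n-d+i-1]$ rather than only the restricted $[2, n-d+i-1]$. Hence each such $G$ is already an admissible $d$-subset of $[n]$, so it is a face of $\Delta(n,d)$.

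For the second inclusion, I adjoin $\{1, n+1\}$ to each such $G$ and verify that the resulting $(d+2)$-set is admissible in $\Delta(n+1, d+2)$. Writing out the relevant generating families for $\Delta(n+1,d+2)$, namely $W_{i+1}(n+1,d+2) = \{[n-d+i+1, n+1] \cup F : F \subset [n-d+i-1],\ |F|=i+1\}$ and $W_{d+1-i}(n+1,d+2) = \{([n-d+i, n+1] \setminus \{n-i\}) \cup F : F \subset [n-d+i-1],\ |F|=i+1\}$, I match $\{1, n+1\} \cup G$ to the former when $G$ has the first form and to the latter when $G$ has the second form, in each case setting $F := \{1\} \cup F'$. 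The hypothesis $n \geq d+2$ guarantees $1 \in [n-d+i-1]$, so the required containment holds; the boundary case $n = d+1$ collapses to the trivial inclusion of the full simplex on $[n+1]$ into $\Delta(d+2, d+2)$.

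The argument is essentially bookkeeping. The main obstacle is simply keeping the sequence of index shifts $(n,d) \to (n-1,d) \to (n,d) \to (n+1,d+2)$ consistent and checking, in each subcase, that the newly designated free part $\{1\} \cup F'$ lies in the correct range $[n-d+i-1]$; no deeper ingredient beyond the explicit description of $\Delta^s(C(n,d))$ seems necessary.
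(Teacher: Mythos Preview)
Your proof is correct and follows essentially the same route as the paper's: both obtain the facets of $\Delta([2,n],d)$ by shifting the explicit generators of $\Delta(n-1,d)$ up by one, observe that each such facet already lies in $W_i(n,d)$ or $W_{d-i}(n,d)$ (giving the first inclusion), and then check that adjoining $\{1,n+1\}$ sends a facet of the first form into $W_{i+1}(n+1,d+2)$ and one of the second form into $W_{d+1-i}(n+1,d+2)$ via $F=\{1\}\cup F'$. Your side remark on the case $n=d+1$ is unnecessary and slightly off (for $d>0$ the complex $\Delta([2,n],d)$ is only defined when $n-1>d$, and $\Delta(d+2,d+2)$ is not defined), but the main argument is exactly the paper's.
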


\begin{proof}
If $d=0$ then the statement is obvious.
Suppose $d>0$.
Let $F$ be a facet of $\Delta([2,n],d)$.
Clearly $\Delta([2,n],d) = \{ \{ i_1,\dots,i_k\}: \{ i_1-1,\dots,i_k-1\} \in \Delta(n-1,d)\}$.
Thus there exists $0 \leq i \leq \frac d 2$ and
$F' \subset [2,n-d+i-1]$ such that
\begin{eqnarray}
\label{f1}
F= \big( [n-d+i,n] \setminus \{n-d+i\} \big) \cup F'
\end{eqnarray}
or 
\begin{eqnarray}
\label{f2}
F= \big( [n-d+i,n] \setminus \{n-i\} \big) \cup F'.
\end{eqnarray}
In both cases, it is clear that $F \in \Delta(n,d)$ and hence
$\Delta([2,n],d) \subset \Delta(n,d)$.
Also, if $F$ is an element of the form (\ref{f1}) then
$$
\{1, n+1\} \cup F= \big( [n-d+i,n+1] \setminus \{n-d+i\} \big) \cup
\big( \{ 1 \} \cup F' \big)
\in W_{i+1}(n+1,d+2)
$$
and if $F$ is an element of the form (\ref{f2}) then
$$
\{1, n+1\} \cup F= \big([n-d+i,n+1] \setminus \{n-i\} \big)
\cup \big( \{ 1 \} \cup F'  \big)
\in W_{d+2-(i+1)}(n+1,d+2).
$$
Thus we have
$\{1,n+1\}*\Delta([2,n],d) \subset \Delta(n+1,d+2)$.
\end{proof}

%\begin{rem}
%It is not hard to see that
%$$\Delta(n,d) = \Delta([2,n],d) \cup \{ \{1 \} \cup F
%: F \in \{ n\} + \Delta([2,n-1],d-2).$$
%Since $\Shift_{1n}(\Delta(n,d))=\Delta(n,d)$,
%this fact and Lemma \ref{2no1} say that $\Delta(n,d)$ is itself
%strongly edge decomposable.
%\end{rem}

Now we consider the exterior algebraic shifted complex of
strongly edge decomposable complexes.
We say that a $(d-1)$-dimensional simplicial complex $\Gamma$ on $V \subset [n]$
satisfies the \textit{shifting-theoretic upper bound relation}
if $\dele \Gamma \subset \Delta(V,d)$.

\begin{proposition} \label{3no5}
Let $\Gamma$ be a $(d-1)$-dimensional simplicial complex on $[n]$ satisfying the Link condition with respect to $\{i,j\}$, where $1 \leq i<j \leq n$.
Suppose $\dim \contra {ij} = d-1$ and $\dim \lk {ij}=d-3$.
If $\contra {ij}$ and $\lk {ij}$ satisfy the shifting-theoretic
upper bound relation then $\Gamma$ satisfies the
shifting-theoretic upper bound relation. 
\end{proposition}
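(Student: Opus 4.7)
The plan is first to reduce to the case $\Gamma=\Shift_{ij}(\Gamma)$, and then to exhibit a structural inclusion for $\dele\Gamma$ that fits into Lemma~\ref{3no4}. Since Lemma~\ref{3no3} gives $\Shift_{ij}(\Gamma)=\Delta_{\varphi_{ij}}(\Gamma)$, Lemma~\ref{3no2} reduces the task to proving $\dele{\Shift_{ij}(\Gamma)}\subset\Delta([n],d)$. The final assertion of Lemma~\ref{2no1} guarantees that the Link condition persists under $\Shift_{ij}$, and a short check with equations~(\ref{2-1}) and~(\ref{2-2}) shows that both $\contra{ij}$ and $\lk{ij}$ are unchanged by $\Shift_{ij}$, so the inductive hypotheses carry over. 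Thus I may assume $\Gamma=\Shift_{ij}(\Gamma)$ and, by Lemma~\ref{2no1}(ii),
$$\Gamma=\contra{ij}\cup\{i\}*\big(\{j\}*\lk{ij}\big).$$

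\textbf{Main inclusion and wrap-up.}
The heart of the proof is to establish a structural inclusion of the form
$$\dele\Gamma\ \subset\ \dele{\contra{ij}}\ \cup\ \{1,n\}*\dele{\lk{ij}}$$
(after further reducing to $i=1$ via another application of Lemma~\ref{3no2} with a suitable $\varphi$, and padding $\lk{ij}$ with any missing vertices so that $\dele{\lk{ij}}\subset\Delta([2,n-1],d-2)$). Granted this inclusion, the inductive hypotheses give $\dele{\contra{ij}}\subset\Delta([2,n],d)$ and $\dele{\lk{ij}}\subset\Delta([2,n-1],d-2)$, while Lemma~\ref{3no4} (applied to its second clause with $n\mapsto n-1$, $d\mapsto d-2$) yields $\{1,n\}*\Delta([2,n-1],d-2)\subset\Delta([n],d)$. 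Combining these forces $\dele\Gamma\subset\Delta([n],d)$, as required.

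\textbf{Main obstacle.}
The hard step is establishing the structural inclusion above: the generic coordinate change defining $\dele\Gamma$ intertwines all of the variables, so one cannot naively split $\dele\Gamma$ into contributions from the two pieces of $\Gamma$. My plan is to follow the nongeneric shifting strategy of~\cite{Mdisc}: build a carefully chosen (non-generic) $\varphi\in\GL$ tailored to the decomposition $\Gamma=\contra{ij}\cup\{i,j\}*\lk{ij}$, use Lemma~\ref{2no1}(iii) (which forbids any minimal generator of $J_\Gamma$ divisible by $e_ie_j$) to decouple the generators of $J_\Gamma$ into those supported on $[n]\setminus\{i\}$ and those of the form $e_i\wedge e_G$, and then invoke Lemmas~\ref{0no1} and~\ref{0no2} to control the coning structure and the embedding of $\lk{ij}$ into the ambient exterior algebra. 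A concluding application of Lemma~\ref{3no2} transfers the resulting bound from $\Delta_\varphi(\Gamma)$ back to $\dele\Gamma$. The delicate bookkeeping consists in arranging the coordinate change so that the $\{i,j\}*\lk{ij}$ piece contributes exactly a double cone $\{1,n\}*\dele{\lk{ij}}$ to the bound, matching the form required by Lemma~\ref{3no4}, and in checking that no unwanted monomials appear in $\init(\varphi(J_\Gamma))$.
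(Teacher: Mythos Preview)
Your proposal is correct and follows essentially the same route as the paper: reduce to $\Gamma=\Shift_{ij}(\Gamma)$ via Lemmas~\ref{2no1}, \ref{3no2}, \ref{3no3}, relabel so that $i=1$ and $j=n$, choose a nongeneric $\tilde\varphi\in\GL$ fixing $e_1$ and acting generically on $e_2,\dots,e_n$ so that the decomposition $J_\Gamma=J_{\Gamma_1}\oplus e_1\wedge(J_{\Gamma_2}+(e_2,\dots,e_{m-1}))$ is preserved, invoke Lemmas~\ref{0no1} and~\ref{0no2} to identify the pieces, and then feed the result through Lemma~\ref{3no4}, properties $(S_2)$, $(S_4)$, and Lemma~\ref{3no2}. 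One small clarification: the structural inclusion you display is established not for $\dele\Gamma$ itself but for $\Delta_{\tilde\varphi}(\Gamma)$ (indeed $\Delta_{\tilde\varphi}(\Gamma)=\dele{\contra{ij}}\cup\{1\}*(\{n\}*\dele{\lk{ij}})$), and the passage back to $\dele\Gamma$ goes through $\Delta(n,d)$ via $(S_2)$, $(S_4)$ before Lemma~\ref{3no2}, rather than yielding that displayed inclusion directly.
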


\begin{proof}
By Lemmas \ref{2no1}, \ref{3no2} and \ref{3no3},
we may assume that $\Gamma = \Shift_{ij}(\Gamma)$.
Let $\Gamma_1 = \contra {ij}$ and $\Gamma_2 = \{j \} * \lk {ij}$.
Since $\dele \Gamma$ is independent of the labeling of the vertices of $\Gamma$ (see \cite[p.\ 287]{BK}),
%By a proper permutation of the vertices,
we may assume that $i=1$, $j=n$ and the vertex set of $\lk {ij}$ is
a set of the form $[m,n-1]$ for some $ m \geq 2$.

Set $E'= \bigwedge \langle e_2,\dots,e_n \rangle$
and $\tilde E = \bigwedge \langle e_m,\dots,e_n \rangle$.
Let
$J_{\Gamma_1} \subset E'$ be the exterior face ideal of $\Gamma_1$
and $J_{\Gamma_2} \subset \tilde E$ the exterior face ideal of $\Gamma_2$.
Then, since Lemma \ref{2no1} (ii) says $\Gamma= \Gamma_1 \cup \{ \{1\} \cup F : F \in \Gamma_2\}$, we have
$$J_\Gamma = J_{\Gamma_1} \bigoplus
e_1 \wedge \big( J_{\Gamma_2} +(e_2,\dots,e_{m-1}) \big),$$ 
where $J_{\Gamma_2} +(e_2,\dots,e_{m-1})$ is an ideal of $E'$.
Then there exists a $\varphi \in GL_{n-1}(K)$ which acts on $E'$
such that
$$
\init \big( \varphi(J_{\Gamma_1}) \big)=\Gin(J_{\Gamma_1})$$
and
$$
\init \big( \varphi \big( J_{\Gamma_2} + (e_2,\dots,e_{m-1}) \big) \big)
=\Gin \big( J_{\Gamma_2} +(e_2,\dots,e_{m-1}) \big).$$
Let $\tilde \varphi \in \GL$ be an automorphism of $E$
defined by $\tilde \varphi(e_1) =e_1$ and $\tilde \varphi(e_k)= \varphi (e_k)$ for $k=2,3,\dots,n$.
Then we have
$$ J_{\Delta_{\tilde \varphi}(\Gamma)} = \init \big( \tilde \varphi (J_\Gamma) \big) =
\Gin (J_{\Gamma_1}) \bigoplus e_1 \wedge \Gin \big(J_{\Gamma_2} +(e_2,\dots,e_{m-1}) \big).
$$ 
Then, since Lemmas \ref{0no1} and \ref{0no2} say that
$$\Gin \big( J_{\Gamma_2} +(e_2,\dots,e_{m-1}) \big)
= J_{\{n\} * \dele {\lk {1n}}} + (e_2,\dots,e_{m-1}),$$
we have
\begin{eqnarray*}
\Delta_{\tilde \varphi}(\Gamma) 
&=& \dele {\Gamma_1} \cup
\big\{ \{1\} \cup F: F \in \{n\} * \Delta^e \big({\lk {1n}} \big) \big\} \\
&\subset& \dele {\Gamma_1}
\cup \big( \{1,n\} * \Delta^e \big( {\lk {1n}} \big) \big).
\end{eqnarray*}
Since 
$\dele {\Gamma_1} \subset \Delta([2,n],d)$
and
$\dele {\lk {1n}} \subset \Delta([m,n-1],d-2)$
by the assumption,
Lemma \ref{3no4} says
$$\Delta_{\tilde \varphi} (\Gamma) \subset \Delta(n,d).$$
Then by ($S_2$) and ($S_4$) we have
$$\Delta^e \big( \Delta_{\tilde \varphi} (\Gamma) \big) \subset
\Delta^e \big( \Delta(n,d) \big)
= \Delta(n,d).$$
Hence we have $\dele \Gamma \subset \Delta(n,d)$ by Lemma \ref{3no2} as desired.
\end{proof}

\begin{theorem} \label{3no6}
Let $\Gamma$ be a $(d-1)$-dimensional strongly edge decomposable complex on $[n]$.
Then $\dele \Gamma \subset \Delta(n,d)$.
\end{theorem}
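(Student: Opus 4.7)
The plan is to prove Theorem \ref{3no6} by induction on the recursive definition of strongly edge decomposable complexes (Definition \ref{1no1}), using Proposition \ref{3no5} as the inductive engine and Lemma \ref{suika} to verify its dimension hypotheses.

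For the base case I would handle the two initial members of the recursion separately. If $\Gamma = \{\emptyset\}$ then $d=0$ and $\dele \Gamma = \{\emptyset\} = \Delta(n,0)$. If $\Gamma$ is the boundary of a simplex on some vertex set $F \subset [n]$ of size $d+1$, then $\Gamma$ is already shifted after relabeling vertices to $\{n-d, n-d+1, \dots, n\}$, so using the invariance of $\dele{\mbox{-}}$ under relabeling (as used in the proof of Proposition \ref{3no5}) and ($S_2$), $\dele \Gamma$ agrees with the boundary of the simplex on the top $d+1$ vertices; one checks directly that this complex sits inside $\Delta(n,d)$ because all its facets have the form $[n-d,n] \setminus \{v\}$, each of which is admissible (it lies in $W_0(n,d) \cup W_d(n,d)$).

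For the inductive step, assume $\Gamma$ is strongly edge decomposable and not of the base form. Then by Definition \ref{1no1} there is some $\{i,j\} \in \Gamma$ such that $\Gamma$ satisfies the Link condition with respect to $\{i,j\}$, and both $\contra{ij}$ and $\lk{ij}$ are strongly edge decomposable. By Lemma \ref{suika}, $\dim \contra{ij} = d-1$ and $\dim \lk{ij} = d-3$, so the induction hypothesis applies to each: $\contra{ij}$ and $\lk{ij}$ both satisfy the shifting-theoretic upper bound relation. Proposition \ref{3no5} then delivers $\dele \Gamma \subset \Delta(n,d)$, completing the induction.

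The only subtlety worth flagging is the base case for the simplex boundary: one must be careful that Proposition \ref{3no5} was stated for the ambient vertex set $[n]$, while the inductive calls from $\Gamma$ on $[n]$ produce $\contra{ij}$ on $[n] \setminus \{i\}$ and $\lk{ij}$ on a proper subset, so the upper bound relation used in the hypothesis of Proposition \ref{3no5} is genuinely the vertex-set-relative version $\dele{\Gamma'} \subset \Delta(V,d')$ defined just before Lemma \ref{3no4}. This is the version verified inductively, and it is exactly what Proposition \ref{3no5} consumes (via its internal invocation of Lemma \ref{3no4}), so no friction arises. No genuine obstacle is expected; the theorem is essentially a clean induction packaging Proposition \ref{3no5} and Lemma \ref{suika}.
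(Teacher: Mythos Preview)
Your proof is correct and follows essentially the same approach as the paper's: induction on the recursive definition of strongly edge decomposable complexes, with Lemma~\ref{suika} supplying the dimension hypotheses and Proposition~\ref{3no5} carrying the inductive step. One minor slip in your base case: not every facet $[n-d,n]\setminus\{v\}$ lies in $W_0(n,d)\cup W_d(n,d)$ (only $v=n-d$ and $v=n$ do), though each such facet is indeed admissible; more simply, since the paper's convention that every element of $[n]$ is a vertex forces $n=d+1$ here, one has directly $\dele\Gamma=\Gamma=\Delta(d+1,d)$, which is how the paper handles it.
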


\begin{proof}
If $\Gamma$ is the boundary of a simplex
then we have $\dele \Gamma =\Gamma = \Delta(d+1,d)$.
If $\Gamma$ is not the boundary of a simplex,
then the statement follows from Lemma \ref{suika}
and Proposition \ref{3no5} inductively.
\end{proof}

Finally, we prove Theorem \ref{main1}.
We need the following facts
(see \cite{H} and \cite{Kdisc}).

\begin{lemma}[Kalai]
\label{CohenMacaulay}
Let $\Gamma$ be a simplicial complex.
\begin{itemize}
\item[(i)]
$\Gamma$ is Cohen--Macaulay over $K$
if and only if $\dele \Gamma$ (computed over $K$) is pure;
\item[(ii)]
$\Gamma$ is Cohen--Macaulay in characteristic $0$
if and only if $\dels \Gamma$ is pure;
\item[(iii)]
A shifted complex is Cohen--Macaulay if and only if it is pure.
\end{itemize}
\end{lemma}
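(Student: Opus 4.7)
The plan is to prove (iii) first as the combinatorial foundation, then derive (i) and (ii) by transferring the Cohen--Macaulay property through algebraic shifting via preservation statements for the generic initial ideal.

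For (iii), the only nontrivial direction is that a pure shifted complex $\Sigma$ on $[n]$ is Cohen--Macaulay. I would prove this by exhibiting an explicit shelling of $\Sigma$, since shellable pure complexes are Cohen--Macaulay over any field. A natural candidate is the reverse lexicographic order on the facets: for each facet $F_k$ appearing in this order, the shifted condition guarantees that every face of $F_k$ contained in some earlier facet is already contained in a codimension-one face of the form $F_k \setminus \{v\}$, where the facet $(F_k \setminus \{v\}) \cup \{w\}$ with $w > v$ appears earlier in the order. Checking that these codimension-one faces generate the restriction of $F_k$ is the heart of the argument. The converse, that Cohen--Macaulay implies pure, is standard.

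For (i), the essential algebraic input is the equality $\mathrm{depth}_E(E/J_\Gamma) = \mathrm{depth}_E(E/\Gin(J_\Gamma))$, established through the exterior Koszul (or Cartan) complex and the flatness of Gr\"obner degeneration in the exterior setting. Combined with the identification of the exterior depth of $E/J_\Gamma$ with the simplicial depth of $\Gamma$ (and hence with the Cohen--Macaulay dimension), this gives that $\Gamma$ is Cohen--Macaulay over $K$ if and only if $\dele \Gamma$ is. Since $\dele \Gamma$ is shifted by property ($S_1$), part (iii) then identifies Cohen--Macaulayness of $\dele \Gamma$ with its purity.

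For (ii), I would follow the symmetric analogue. In characteristic $0$, the projective dimension is preserved under reverse lexicographic $\Gin$ by a Bayer--Stillman style argument, so $\mathrm{pd}_S(S/I_\Gamma) = \mathrm{pd}_S(S/\Gin(I_\Gamma))$. The squarefree operation $\Phi$, applied to the strongly stable monomial ideal $\Gin(I_\Gamma)$, produces a squarefree monomial ideal with the same projective dimension, by a theorem of Aramova--Herzog--Hibi. Hence $\Gamma$ is Cohen--Macaulay in characteristic $0$ if and only if $\dels \Gamma$ is, and by (iii) this is equivalent to $\dels \Gamma$ being pure.

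The main obstacle is the depth and projective-dimension preservation under $\Gin$ and $\Phi$; these are nontrivial algebraic facts relying on upper-semicontinuity of Betti numbers under Gr\"obner degeneration and, in the exterior case, on the structure of the Cartan complex. Once these preservation results are in place, the shellability argument for (iii) unifies all three parts of the lemma.
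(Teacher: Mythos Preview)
The paper does not supply its own proof of this lemma: it is stated as a known result attributed to Kalai, with the parenthetical reference ``(see \cite{H} and \cite{Kdisc})'' immediately preceding the statement. So there is no paper proof to compare against.

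Your outline is a correct reconstruction of the standard argument. A few minor comments: for (iii), the shellability of pure shifted complexes is classical and your restriction-face description is essentially the right one (one often phrases this via vertex-decomposability instead, but the content is the same). For (i) and (ii), the crux is indeed depth/projective-dimension preservation under $\Gin$; in the symmetric case this is Bayer--Stillman (reverse lex, characteristic $0$), and in the exterior case the analogous statement holds in arbitrary characteristic via the Aramova--Herzog theory of the Cartan complex. Combined with Auslander--Buchsbaum and the fact that taking an initial ideal can only increase Betti numbers (so Cohen--Macaulayness of the shifted complex forces Cohen--Macaulayness of $\Gamma$), this closes both directions. The preservation of projective dimension under $\Phi$ in (ii) is the Aramova--Herzog--Hibi result you cite. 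None of this is a gap; you have identified the right ingredients, and the references \cite{H} and \cite{Kdisc} in the paper are precisely where these facts are assembled.
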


\begin{lemma}[Kalai] \label{5no70}
Suppose $\mathrm{char}(K)=0$.
Let $\Gamma$ be a $(d-1)$-dimensional Cohen--Macaulay complex on $[n]$.
Then $\Gamma$ has the strong Lefschetz property if and only if
$\dels \Gamma \subset \Delta(n,d)$ and $h_i(\Gamma)=h_{d-i}(\Gamma)$ for $i=0,1,\dots,d$.
\end{lemma}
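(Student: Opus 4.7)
The plan is to move between three closely related objects: the shifted complex $\dels \Gamma$, its algebraic preimage $\Gin(I_\Gamma)$ under the squarefree operation $\Phi$, and the Artinian reduction $A = K[\Gamma]/(\theta_1,\dots,\theta_d)$ for a generic l.s.o.p. By (\ref{eight}), the Hilbert function of $A$ is the $h$-vector of $\Gamma$, so symmetry of $h(\Gamma)$ is simply $\dim_K A_i = \dim_K A_{d-i}$, which is immediate from SLP in both directions. The real content is matching the containment $\dels \Gamma \subset \Delta(n,d)$ with the existence of bijective multiplication maps $\omega^{d-2i}$.

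For the forward direction, I would assume $\Gamma$ has SLP and choose coordinates so that the last $d+1$ variables play the roles of $\omega,\theta_d,\dots,\theta_1$ from Lemma \ref{4no2}. In these coordinates $\init(I_\Gamma)$ agrees with $\Gin(I_\Gamma)$. The combinatorial content of SLP together with symmetry is that the $g$-vector $(h_0,h_1-h_0,\dots,h_{\lfloor d/2 \rfloor}-h_{\lfloor d/2 \rfloor-1})$ is an $M$-sequence (Stanley), which by Macaulay's bound forces the generators of $\Gin(I_\Gamma)$ into the same combinatorial shape as those coming from the cyclic polytope. After applying $\Phi$, each facet of $\dels \Gamma$ satisfies the admissibility condition defining the $W_i(n,d)$, so $\dels \Gamma \subset \Delta(n,d)$.

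For the reverse direction, assume $\dels \Gamma \subset \Delta(n,d)$ and $h(\Gamma)$ is symmetric. Property $(S_4)$ together with the structure of $\Delta(n,d)$ gives componentwise $h_i(\Gamma) \le h_i(\cyclic)$, and combined with symmetry this forces the $g$-vector of $\Gamma$ to be an $M$-sequence. I would then select a generic l.s.o.p.\ $\theta_1,\dots,\theta_d$ and a generic linear form $\omega$ in $S_1$, and invoke Wiebe's Lemma \ref{4no4}: the facet data of $\dels \Gamma \subset \Delta(n,d)$ determines $\Gin(I_\Gamma)$ tightly enough that $S/\init(\cdot)$ has SLP, whence so does $K[\Gamma]$.

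The main obstacle is the reverse direction. The $g$-theorem numerical data alone does not give bijectivity of the maps $\omega^{d-2i}$; one needs control of $\Gin(I_\Gamma)$ as an ideal and not merely of its Hilbert function, since many ideals share the same Hilbert function but only some are SLP. The cleanest route is Kalai's original argument in \cite{Kdisc}, which carefully translates the containment at the level of shifted complexes into monomial-generator data for $\Gin(I_\Gamma)$ that is strong enough to pin down the Lefschetz multiplication via Lemma \ref{4no4}.
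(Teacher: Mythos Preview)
The paper itself does not prove this lemma; it simply attributes both directions to Kalai's work in \cite{Kdisc} and \cite{K}. So there is no ``paper's proof'' to compare against beyond the remark that Kalai's original argument already gives the equivalence. That said, your proposal has a genuine gap in the forward direction that is worth naming.

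Your forward argument runs: SLP $\Rightarrow$ the $g$-vector is an $M$-sequence $\Rightarrow$ (via Macaulay's bound) the generators of $\Gin(I_\Gamma)$ have the right shape $\Rightarrow$ $\dels\Gamma\subset\Delta(n,d)$. The middle step is false. The $M$-sequence condition is purely numerical; it constrains the Hilbert function but not the ideal itself. There are many strongly stable ideals with the same Hilbert function as $\Gin(I_{C(n,d)})$ that do not contain $\Gin(I_{C(n,d)})$, so a Macaulay-type bound cannot produce the containment $\Gin(I_\Gamma)\supset\Gin(I_{C(n,d)})$ that is equivalent to $\dels\Gamma\subset\Delta(n,d)$. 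You correctly diagnose exactly this issue in your reverse direction (``numerical data alone does not give bijectivity'') but then fall into it yourself in the forward direction.

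The argument Kalai actually uses, and which works in both directions at once, bypasses $g$-vectors entirely. It exploits the standard rev-lex property of generic initial ideals: passing to the quotient by generic linear forms $\theta_1,\dots,\theta_d$ corresponds to setting $x_{n-d+1},\dots,x_n$ to zero in $\Gin(I_\Gamma)$, and multiplication by a further generic $\omega$ corresponds to multiplication by $x_{n-d}$. Thus bijectivity of $\omega^{d-2i}$ on the Artinian reduction is literally the statement that for every monomial $u\in K[x_1,\dots,x_{n-d}]_i$ with $u\notin\Gin(I_\Gamma)$ one has $u\,x_{n-d}^{\,d-2i}\notin\Gin(I_\Gamma)$, together with the matching dimension count from $h_i=h_{d-i}$. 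Unwinding this through the squarefree operation $\Phi$ yields precisely the admissibility condition defining the facets of $\Delta(n,d)$. This is a direct ideal-theoretic translation, not a Hilbert-function argument, and it gives the equivalence cleanly.
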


In \cite[Theorem 6.4]{Kdisc},
Kalai proved the necessity of Lemma \ref{5no70}.
However, as noted in \cite[\S 5.2]{K},
it is clear from the proof of \cite[Theorem 6.4]{Kdisc} that these conditions are equivalent.

\begin{proof}[Proof of Theorem \ref{main1}]
Let $\Gamma$ be a $(d-1)$-dimensional strongly edge decomposable complex
on $[n]$.
Since $\Gamma$ is Cohen--Macaulay in arbitrary characteristic
by Theorem \ref{4no7},
Lemma \ref{CohenMacaulay} says that $\del \Gamma$ is pure.
Also, Lemma \ref{2no2} says $h_i(\Gamma)=h_{d-i}(\Gamma)$ for $i=0,1,\dots,d$.
Finally,
$\dele \Gamma \subset \dels \cyclic$ follows from Theorem \ref{3no6}
and $\dels \Gamma \subset \dels \cyclic$ follows from
Corollary \ref{4no7} and Lemma \ref{5no70}.
\end{proof}

\section{Squeezed spheres}

Squeezed spheres were introduced by Kalai \cite{Ksq} by extending the construction of Billera--Lee polytopes \cite{BL}.
It is known that
the number of combinatorial types of squeezed $(d-1)$-spheres
with $n$ vertices is strictly larger than the number of combinatorial types of boundary complexes of simplicial $d$-polytopes with $n$ vertices
for $d \geq 5$ and $n \gg 0$ (see \cite{Ksq}).
In this section, we show that
squeezed spheres are strongly edge decomposable,
and prove Theorem \ref{main2}.

First we introduce squeezed spheres following \cite[\S 5.2]{Ksq} and \cite[\S 2]{Msq}.
Fix integers $n>d>0$ and let $m=n-d-1$.
Let $\M_{[m]}$ be the set of monomials in $K[x_1,\dots,x_m]$
where $\M_{[0]}=\{1\}$.
A set $U$ of monomials in $\M_{[m]}$ is called an
\textit{order ideal of monomials on $[m]$} if $U$ satisfies
\begin{itemize}
\item[(i)] $\{1,x_1,\dots,x_m\} \subset U$;
\item[(ii)] if $u \in U$ and $v \in \M _{[m]}$ divides $u$ then $v \in U$.
\end{itemize}
An order ideal $U$ of monomials on $[m]$ is said to be \textit{shifted}
if $u x_i \in U$ and $i<j \leq m$ imply $ux_j \in U$.
For any $u =x_{i_1} x_{i_2} \cdots x_{i_k} \in \M_{[m]}$ with $i_1 \leq i_2 \leq \cdots \leq i_k$
and with $k \leq \frac {d+1} 2$,
define a $(d+1)$-subset $F_d(u) \subset [n]$ by
$$F_d(u) = \{ i_1,i_1+1\} \cup \{ i_2+2,i_2+3\} \cup \dots \cup \{i_k +2(k-1),i_k +2k-1\}
\cup [n+2k-d,n]$$
where $F_d(1)= [n-d,n]$.

Let $U$ be a shifted order ideal of monomials of degree at most $\frac {d+1} 2$ on $[m]$
and let $B_d(U)$ be the simplicial complex generated by
$\{F_d(u): u \in U\}$.
Kalai \cite{Ksq} proved that $B_d(U)$ is a shellable $d$-ball with $n$ vertices.
This simplicial complex $B_d(U)$ is called the \textit{squeezed $d$-ball} w.r.t.\ $U$,
and its boundary $S_d(U)= \partial (B_d(U))$ is called the \textit{squeezed $(d-1)$-sphere} w.r.t.\ $U$.
%Note that squeezed sphere has $n$ vertices if $d \geq 2$.
%and $S_d(U)$ consists of two points $\{1\}$ and $\{n\}$.
The $h$-vector of the squeezed ball $B_d(U)$ and that of the squeezed sphere $S_d(U)$ are easily computed from $U$ as follows
(see \cite[Proposition 5.2]{Ksq}).

\begin{lemma}[Kalai] \label{5no1}
Let $U$ be a shifted order ideal of monomials of degree at most
$\frac {d+1} 2$ on $[m]$. Then
\begin{itemize}
\item[(i)] $h_i(B_d(U))= |\{ u \in U :\deg u =i\}|$ for $i=0,1,\dots,d+1$;
\item[(ii)] $h_i(S_d(U))-h_{i-1}(S_d(U))=h_i(B_d(U))$ for 
$i=0,1,\dots, \lfloor \frac d 2 \rfloor$.
\end{itemize}
\end{lemma}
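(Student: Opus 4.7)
Both parts rely on the fact, already established by Kalai, that $B_d(U)$ is shellable. The plan is to exhibit an explicit shelling of $B_d(U)$ whose restriction data matches the degree statistic on $U$, giving (i) at once, and then to derive (ii) from (i) by combining it with the standard identity relating the $h$-vector of a shellable ball to that of its boundary sphere.

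For (i), the strategy is to order the facets $\{F_d(u): u \in U\}$ so that they form a shelling in which the restriction face of $F_d(u)$---the minimal new face added when $F_d(u)$ is appended to the previously listed facets---has cardinality exactly $\deg(u)$. Inspecting the definition of $F_d(u)$, for $u = x_{i_1}\cdots x_{i_k}$ with $i_1 \le \cdots \le i_k$ the facet consists of $k$ pairs of consecutive integers together with the tail $[n+2k-d,n]$; the candidate for the restriction face is the set of the lower elements of the $k$ pairs, namely $\{i_1,\, i_2+2,\, \dots,\, i_k+2(k-1)\}$, which has cardinality $k = \deg(u)$. Granted this, the standard formula $h_i = \#\{F : |R(F)| = i\}$ for shellable complexes yields (i) immediately.

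For (ii), the plan is to invoke the general identity, valid for any shellable $d$-ball $B$ with boundary $(d-1)$-sphere $\partial B$,
\[
h_i(\partial B) - h_{i-1}(\partial B) = h_i(B) - h_{d+1-i}(B),
\]
which follows from comparing the shelling of $B$ with the induced shelling of $\partial B$. Applied to $B = B_d(U)$ this reads
\[
h_i(S_d(U)) - h_{i-1}(S_d(U)) = h_i(B_d(U)) - h_{d+1-i}(B_d(U)).
\]
Since $U$ consists of monomials of degree at most $\lfloor (d+1)/2 \rfloor$, part (i) forces $h_j(B_d(U)) = 0$ for $j > \lfloor (d+1)/2 \rfloor$. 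For $0 \le i \le \lfloor d/2 \rfloor$ one checks that $d+1-i \ge \lceil d/2 \rceil + 1 > \lfloor (d+1)/2 \rfloor$, so $h_{d+1-i}(B_d(U)) = 0$, which yields (ii).

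The main obstacle is part (i): namely, verifying that the proposed ordering of $U$---most naturally the reverse lexicographic order on monomials, which is compatible with the shifted order-ideal structure---really is a shelling of $B_d(U)$ and correctly pins down the restriction faces. This reduces to a direct but careful combinatorial check: given $u = x_{i_1}\cdots x_{i_k}$ and a vertex $v$ in the proposed restriction, one must exhibit a previously-listed facet $F_d(u')$ containing $F_d(u)\setminus\{v\}$, with $u' \in U$ thanks to the shifted order-ideal hypothesis on $U$; conversely, for $v$ not in the proposed restriction one must show no such $u'$ exists. Once (i) is established, part (ii) follows immediately from the shellable-ball identity as indicated above.
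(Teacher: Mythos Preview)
The paper does not give its own proof of this lemma; it simply records it as a result of Kalai and cites \cite[Proposition 5.2]{Ksq}. Your proposal is exactly the argument Kalai uses there: order the facets $F_d(u)$ compatibly with the partial order on $U$ (reverse lex works), verify that the restriction face of $F_d(u)$ is the set of ``left endpoints'' $\{i_1,\,i_2+2,\dots,i_k+2(k-1)\}$ of the consecutive pairs, whence (i) follows from the shelling formula for $h$-vectors; then (ii) is immediate from the standard ball--boundary relation $g_i(\partial B)=h_i(B)-h_{d+1-i}(B)$ together with the vanishing $h_j(B_d(U))=0$ for $j>\lfloor (d+1)/2\rfloor$ forced by (i). Your degree-bound check that $d+1-i>\lfloor(d+1)/2\rfloor$ for $i\le\lfloor d/2\rfloor$ is correct in both parities of $d$.

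The only point you flag as unfinished---that the proposed order really is a shelling with the stated restriction faces---is indeed the substantive step, but it is straightforward: removing a left endpoint $i_t+2(t-1)$ from $F_d(u)$ leaves a codimension-one face contained in $F_d(u')$ where $u'$ is obtained from $u$ by replacing $x_{i_t}$ with $x_{i_t+1}$ (or deleting it if this merges with the next block or the tail), and $u'\in U$ because $U$ is a shifted order ideal; conversely, removing a right endpoint or a tail vertex produces a face not contained in any earlier facet. So your plan is complete and matches the cited source.
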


By using the above lemma,
it is easy to see that
$f_k(B_d(U)) = f_k(S_d(U))$ for $k=0,1,\dots, \lfloor \frac d 2 \rfloor -1$
(see \cite[Proposition 5.3]{Ksq}).
Thus, in particular, we have
\begin{eqnarray} \label{5-0}
\big\{ F \in B_d(U): |F| \leq \frac d 2 \big\} =
\big\{ F \in S_d(U): |F| \leq \frac d 2 \big\}.
\end{eqnarray}

Now we will show that squeezed spheres are strongly edge decomposable.
Fix integers $d > 2$ and $n >d+1$.
Let $m=n-d-1>0$.
Let $U$ be a shifted order ideal of monomials of degree at most $\frac {d+1} 2$ on $[m]$,
and let
$$\hat U = U \cap K[x_2,\dots,x_m],$$
where $\hat U = \{1\}$ if $m =1$,
and
$$\tilde U = \{ u \in \M_{[m]}: x_1 u \in U\}.$$
Thus $U = \hat U \cup x_1 \tilde U$.
Let $B_d ( \hat U)$ be the simplicial complex generated by
$\{ F_d(u): u \in \hat U\}$.
Clearly $B_d(\hat U)$ is combinatorially isomorphic to the squeezed $d$-ball
w.r.t.\ $\{ x_{i_1} \cdots x_{i_k} \in \M_{[m-1]}: x_{i_1+1} \cdots x_{i_k+1} \in \hat U\}$.
Let $\tilde B_{d-2}(\tilde U)$ be the simplicial complex generated by $\{ F_d(x_1u) \setminus \{1,2\}: u \in \tilde U\}$.
Then $\tilde B_{d-2}( \tilde U)$ is combinatorially isomorphic to the squeezed $(d-2)$-ball w.r.t.\
$\{ x_{i_1} \cdots x_{i_k} \in \M_{[m-\ell]}: x_{i_1+\ell} \cdots x_{i_k+\ell} \in \tilde U\}$,
where $\ell= \max \{ k : 0 \leq k \leq m,\ x_k \not \in \tilde U\}$.
We
write $S_d(\hat U)$ for the boundary of $B_d(\hat U)$
and write $\tilde S_{d-2} (\tilde U)$ for the boundary of $\tilde B_{d-2} (\tilde U)$.

\begin{lemma} \label{5no2}
With the same notations as above,
one has
\begin{eqnarray} \label{5-1}
\Shift_{12} \big( S_d(U) \big) =
S_d(\hat U) \cup \big\{ \{1 \} \cup F: F \in \{2 \} * \tilde S_{d-2}( \tilde U) \big\}.
\end{eqnarray}
In particular,
$\mathcal{C}_{S_d(U)} ( \{1,2\}) = S_d(\hat U)$,
$\mathrm{lk}_{S_d(U)}(\{1,2\})= \tilde S_{d-2}(\tilde U)$
and $S_d(U)$ satisfies the Link condition with respect to $\{1,2\}$.
\end{lemma}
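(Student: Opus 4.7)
My plan is to prove the decomposition \eqref{5-1}, which by Lemma \ref{2no1}(i)$\Leftrightarrow$(ii) will simultaneously yield the Link condition together with the identifications $\mathcal{C}_{S_d(U)}(\{1,2\}) = S_d(\hat U)$ and $\mathrm{lk}_{S_d(U)}(\{1,2\}) = \tilde S_{d-2}(\tilde U)$, once one matches the two sides.

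The first step is to analyze the facet structure of the ball $B_d(U)$. From the defining formula for $F_d$, vertex $1$ lies in $F_d(u)$ if and only if $x_1 \mid u$, in which case automatically $\{1,2\} \subset F_d(u)$. Writing $u = x_1 v$ with $v \in \tilde U$, the set $F_d(x_1 v) \setminus \{1,2\}$ is by definition a facet of $\tilde B_{d-2}(\tilde U)$. So the facets of $B_d(U)$ split into those in the group $\{F_d(u) : u \in \hat U\}$ (none containing vertex $1$, together comprising $B_d(\hat U)$) and those of the form $\{1,2\} \cup G$ with $G$ a facet of $\tilde B_{d-2}(\tilde U)$. This yields the complex-level identity $B_d(U) = B_d(\hat U) \cup (\{1,2\} * \tilde B_{d-2}(\tilde U))$ and in particular $\mathrm{lk}_{B_d(U)}(\{1,2\}) = \tilde B_{d-2}(\tilde U)$.

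The second step is to transfer this to the boundary sphere $S_d(U) = \partial B_d(U)$. By Kalai, $B_d(U)$ is shellable, hence a PL $d$-ball. I would first check that $\{1,2\}$ lies in $\partial B_d(U)$: for $d \geq 4$ this is immediate from \eqref{5-0} and Lemma \ref{5no1} once one observes that $x_1 \in U$ forces $\{1,2\} \in B_d(U)$, and the case $d = 3$ can be handled directly. The PL-topological link-boundary identity then gives
\[
\mathrm{lk}_{S_d(U)}(\{1,2\}) = \partial\bigl(\mathrm{lk}_{B_d(U)}(\{1,2\})\bigr) = \partial \tilde B_{d-2}(\tilde U) = \tilde S_{d-2}(\tilde U).
\]
An analogous analysis of the subcomplex consisting of faces of $B_d(U)$ avoiding vertex $1$, together with how vertex $1$ collapses to vertex $2$ under the contraction, should yield $\mathcal{C}_{S_d(U)}(\{1,2\}) = S_d(\hat U)$.

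With both identifications in place, the right-hand side of \eqref{5-1} is exactly the decomposition appearing in Lemma \ref{2no1}(ii) with $i=1$ and $j=2$. Invoking the equivalence (i)$\Leftrightarrow$(ii) of that lemma then simultaneously delivers \eqref{5-1} and the Link condition for $S_d(U)$ with respect to $\{1,2\}$. The main obstacle is the boundary-transfer step: rigorously moving from the combinatorial ball decomposition to its boundary sphere. This relies on Kalai's nontrivial shellability of squeezed balls (providing the PL structure) in order to legitimately apply the PL link-boundary identity, and on verifying carefully that $\{1,2\}$ is indeed a boundary edge of $B_d(U)$ in all relevant dimensions; once this is secured, the remaining bookkeeping -- matching the faces of $\mathrm{Shift}_{12}(S_d(U))$ on each side of \eqref{5-1} -- is routine.
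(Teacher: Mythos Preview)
Your argument has a genuine logical gap in the final step. Once you have identified $\mathrm{lk}_{S_d(U)}(\{1,2\}) = \tilde S_{d-2}(\tilde U)$ and (granting it) $\mathcal{C}_{S_d(U)}(\{1,2\}) = S_d(\hat U)$, you observe that the right-hand side of \eqref{5-1} coincides with the expression on the right of Lemma~\ref{2no1}(ii). But Lemma~\ref{2no1} asserts the equivalence of (i) the Link condition and (ii) the \emph{equality} $\Shift_{12}(\Gamma) = \mathcal{C}_\Gamma(\{1,2\}) \cup \{\{1\} \cup F : F \in \{2\} * \mathrm{lk}_\Gamma(\{1,2\})\}$. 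Merely knowing what $\mathcal{C}_\Gamma(\{1,2\})$ and $\mathrm{lk}_\Gamma(\{1,2\})$ are does not establish either (i) or (ii); you still need one of them as input. You have described the right-hand side of \eqref{5-1} but have not shown it equals $\Shift_{12}(S_d(U))$, so the invocation of (i)$\Leftrightarrow$(ii) is circular. A secondary issue: your contraction identification is only asserted (``should yield''), and there is no PL analogue of the link--boundary identity for contractions, so this step would in any case require its own combinatorial argument.

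The paper takes a different route that avoids PL topology entirely. It proves \eqref{5-1} directly: first the inclusion $\supset$ by an explicit case analysis on facets of $S_d(\hat U)$ and of $\tilde S_{d-2}(\tilde U)$, tracking which facets of $B_d(U)$ contain a given boundary face and exploiting that $1 \in F_d(u)$ forces $2 \in F_d(u)$; then the inclusion is upgraded to equality by an $h$-vector count using Lemma~\ref{5no1} and the Dehn--Sommerville equations. The Link condition and the two identifications are then read off from \eqref{5-1} via Lemma~\ref{2no1}(ii).
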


\begin{proof}
By using (\ref{2-1}) together with the equation
$\lk {ij} = \lks {ij}$,
the second statement immediately follows from (\ref{5-1})
and Lemma \ref{2no1} (ii).
Thus what we must prove is equation (\ref{5-1}).
First, we show that the lefthand side contains the righthand side.
\smallskip

\textit{Case 1}:
Let $F$ be a facet of $S_d(\hat U)$.
We will show $F \in \Shift_{12}(S_d(U))$.
Since $1 \not \in F$,
if $F$ is a facet of $S_d(U)$ then we have
$C_{12}(F) =F \in \Shift_{12}(S_d(U))$ as desired.

Suppose that $F$ is not a facet of $S_d(U)$.
Then since $F$ is a facet of $S_d( \hat U)$, there exists the unique monomial $u_0 \in \hat U$
such that $F \subset F_d(u_0) \in B_d(\hat U)$.
Since $F$ is not a facet of $S_d(U)$, there exists $x_1 v \in x_1\tilde U$ such that
$F \subset F_d(x_1 v)$.
In particular, since $\{1,2\} \subset F_d(x_1v)$ and $1 \not \in F$,
we have $F= F_d(x_1v) \setminus \{1\}$.

We will show $F_d(x_1v) \setminus \{2\} \in S_d(U)$.
By the definition of $F_d(\mbox{-})$
it is clear that, for any $u \in U$,
if $1 \in F_d(u)$ then $2 \in F_d(u)$.
Thus if $G$ is a facet of $B_d(U)$ which contains $F_d(x_1v) \setminus \{2\}$ then $G$ must be equal to $F_d(x_1v)$.
Thus $F_d(x_1v)$ is the only facet of $B_d(U)$ which contains $F_d(x_1v) \setminus \{2\}$.
Hence $F_d(x_1v) \setminus \{2\}$ is a facet of $S_d(U)$.
Then we have 
$C_{12}(F_d(x_1v) \setminus \{2\}) = F_d(x_1v) \setminus \{ 1\} = F \in \Shift_{12}(S_d(U))$ as desired.
\smallskip

\textit{Case 2}:
Let $F$ be a facet of $\tilde S_{d-2} (\tilde U)$.
We will show $\{1,2\} \cup F \in \Shift_{12}(S_d(U))$.
Since $F$ is a facet of $\tilde S_{d-2}(\tilde U)$,
there exits the unique monomial $u_0 \in \tilde U$ such that
$\{1,2\} \cup F \subset F_d(x_1u_0)$.
However,
since $U= \hat U \cup x_1 \tilde U$ and $1 \not \in F_d(u)$ for any $u \in \hat U$,
$F_d(x_1u_0)$ is the only facet of $B_d(U)$ which contains $\{1,2\} \cup F$.
Thus we have
$\{1,2\} \cup F \in S_d(U)$
and $C_{12}(\{1,2\} \cup F) = \{1,2\} \cup F \in \Shift_{12}(S_d(U))$ as desired.
\smallskip

We already proved that
$\Shift_{12}(S_d(U)) \supset S_d(\hat U) \cup \{ \{1\} \cup F:
F \in \{2\} * \tilde S_{d-2} (\tilde U) \}$.
Thus, to prove (\ref{5-1}), it is enough to show that
\begin{eqnarray} \label{XX}
f_k(S_d(U)) = f_k(S_d(\hat U)) +
f_{k-1}(\{2\} * \tilde S_{d-2}(\tilde U))
\ \ \mbox{ for all $k$}.
\end{eqnarray}
By Lemma \ref{5no1}, we have
\begin{eqnarray*}
&&h_k \big( S_d(\hat U) \big) + h_{k-1} \big( \{2\} * \tilde S_{d-2}(\tilde U) \big)\\
&&= h_k \big( S_d(\hat U) \big) + h_{k-1} \big( \tilde S_{d-2}(\tilde U) \big)\\
&&= |\{ u \in \hat U: \deg u \leq k\}| + |\{ u \in \tilde U: \deg u \leq k-1\}|\\
&&= |\{ u \in U: \deg u \leq k\}|\\
&&= h_k \big( S_d(U) \big)
\end{eqnarray*}
for $k=0,1,\dots, \lfloor \frac d 2 \rfloor$.
Then, by the Dehn-Sommerville equations, we have 
$$h_k \big( S_d(\hat U) \big) + h_{k-1} \big( \{2\} * \tilde S_{d-2}(\tilde U) \big)
=h_k \big( S_d(\hat U) \big) + h_{k-1} \big( \tilde S_{d-2}(\tilde U) \big)
= h_k \big( S_d(U) \big) $$ for all $k$.
By using the above equations as well as the relation between $f$-vectors and $h$-vectors,
a routine computation implies the desired equation (\ref{XX}).
\end{proof}

\begin{rem} \label{5no3}
The set of facets of squeezed spheres was completely determined in \cite[Proposition 1]{L}.
It would yield an alternate proof of Lemma \ref{5no2}.
\end{rem}

\begin{proposition} \label{5no4}
Squeezed spheres are strongly edge decomposable.
\end{proposition}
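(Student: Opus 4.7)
The plan is to induct on the number of vertices $n$, with Lemma \ref{5no2} doing essentially all of the geometric work. To set up the induction I would fix $d$, range over $n \geq d+1$ and all shifted order ideals $U$ of monomials of degree at most $\frac{d+1}{2}$ on $[m]=[n-d-1]$, and read off the base and recursive clauses of Definition \ref{1no1} directly from that lemma.

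The base case is $n = d+1$, i.e.\ $m = 0$: then $U = \{1\}$ and $S_d(U)$ is the boundary of the $d$-simplex, which is strongly edge decomposable by fiat of Definition \ref{1no1}. The extreme low-dimensional cases ($d \leq 2$, where the link appearing in the recursion collapses to a simplex boundary or to $\{\emptyset\}$) are absorbed into the same base clauses.

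For the inductive step, I would assume $n > d+1$, so $m \geq 1$, and decompose $U = \hat U \cup x_1 \tilde U$ as in the paragraph preceding Lemma \ref{5no2}. That lemma hands me exactly what Definition \ref{1no1} requires: $S_d(U)$ satisfies the Link condition with respect to $\{1,2\}$; its contraction $\mathcal{C}_{S_d(U)}(\{1,2\}) = S_d(\hat U)$ is combinatorially a squeezed $(d-1)$-sphere on $n-1$ vertices; and its link $\mathrm{lk}_{S_d(U)}(\{1,2\}) = \tilde S_{d-2}(\tilde U)$ is combinatorially a squeezed $(d-3)$-sphere on at most $n-2$ vertices. By the inductive hypothesis both the contraction and the link are strongly edge decomposable, which closes the step.

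The only thing left to verify, and I do not expect it to be the hard part, is that $\hat U$ and $\tilde U$ (after the natural relabeling of variables described in the paper) are themselves shifted order ideals of monomials of the appropriate degree bound, so that the recursion actually lands inside the class of squeezed spheres and the inductive hypothesis applies. For $\hat U = U \cap K[x_2,\dots,x_m]$ this is immediate. For $\tilde U = \{u : x_1 u \in U\}$, shiftedness and the order-ideal property transfer from $U$ via the map $u \mapsto x_1 u$, and the degree bound drops by one to $\frac{(d-2)+1}{2}$ because of the extra $x_1$; the possible absence of the first few $x_i$ from $\tilde U$ is absorbed into the parameter $\ell$ appearing in the paper's definition of $\tilde S_{d-2}(\tilde U)$. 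Since Lemma \ref{5no2} has already done the real geometric work, no significant obstacle remains.
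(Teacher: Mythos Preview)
Your approach is the paper's: Lemma \ref{5no2} plus induction. Two small points about the induction scheme, though. First, you cannot literally ``fix $d$'' and induct only on $n$: the link $\tilde S_{d-2}(\tilde U)$ is a squeezed $(d-3)$-sphere, so invoking the inductive hypothesis for it requires the result in a strictly lower dimension, which an induction on $n$ with $d$ frozen does not supply. The paper inducts on $d$ and $n$ simultaneously (equivalently, on $n$ alone but quantifying over all $d$ at once), which covers both the contraction (same $d$, fewer vertices) and the link (smaller $d$). Second, Lemma \ref{5no2} is stated only for $d>2$, so the case $d=2$ --- where $S_d(U)$ is a cycle of arbitrary length, not a simplex boundary --- needs its own short argument; the paper handles it directly by observing that any $1$-sphere is strongly edge decomposable via repeated edge contraction. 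With those two adjustments your argument is complete and coincides with the paper's.
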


\begin{proof}
%Let $n >d >0$ be integers and let
%$U$ be a shifted order ideal of monomials of degree at most $\frac {d+1} 2$ on $[n-d-1]$.
Let $S_d(U)$ be a squeezed $(d-1)$-sphere with $n$ vertices.
We use induction on $d$ and $n$.
If $d=1$ or $n =d+1$ then the statement is obvious
since $S_d(U)$ is the boundary of a simplex.
Also, it is easy to see that any $1$-dimensional sphere is strongly edge decomposable.
Indeed, if $\Gamma$ is a $1$-dimensional sphere with $k$ edges, where $k \geq 4$,
then, for any $\{i,j\} \in \Gamma$,
$\Shift_{ij}(\Gamma)= \contra {ij} \cup \{\{i,j\}, \{i\}\}$ 
and $\contra{ij}$ is a $1$-dimensional sphere with $(k-1)$ edges.
Finally,
if $d > 2$ and $n>d+1$ then the statement follows from Lemma \ref{5no2} and the induction hypothesis.
\end{proof}

Next we will prove Theorem \ref{main2}.
For a $(d-1)$-dimensional simplicial complex $\Gamma$ on $[n]$,
let
$$U(\Gamma)=\{ u \in \M _{[n-d-1]}: u \not \in \Gin(I_\Gamma)\}.$$
To prove Theorem \ref{main2}, we need the following facts
which were shown in \cite[Proposition 4.1, Theorem 4.2 and Corollary 7.7]{Msq}.
Let $\Phi : \M \to \M$ be the squarefree operation defined in \S 4.

\begin{lemma} \label{5no9}
Let $n>d>0$ be integers and $m=n-d-1$.
Let $U$ be a shifted order ideal of monomials of degree at most
$\frac {d+1} 2$ on $[m]$
and let $I(U) \subset S$ be the ideal generated by
$\{ u \in \M_{[m]}: u \not \in U\}$.
\begin{itemize}
\item[(i)]
$I_{\dele {B_d(U)}}= I_{\dels {B_d(U)}} = \Phi(I(U))$;
\item[(ii)] If $\max \{ \deg u : u \in U \} \leq \frac d 2$ then
$U(S_d(U))=U.$
\end{itemize}
\end{lemma}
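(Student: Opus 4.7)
The plan is to exploit the shellable structure of squeezed balls to pin down their algebraic shifted complex in part (i), and then bootstrap to the sphere case in part (ii) via the coincidence of low-dimensional skeleta expressed by (\ref{5-0}).

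For part (i), Kalai's theorem that $B_d(U)$ is shellable makes $K[B_d(U)]$ Cohen--Macaulay, and Lemma \ref{5no1} gives its $h$-vector as $h_i(B_d(U)) = |\{u \in U : \deg u = i\}|$, matching the Hilbert function of $S/I(U)$. The shelling of $B_d(U)$ can be organized so that each facet $F_d(u)$ contributes a specific restriction, from which one reads off the minimal generator of $\Gin(I_{B_d(U)})$ corresponding to $u$. Because $U$ is a shifted order ideal, $I(U)$ is a strongly stable monomial ideal in the reverse-lex convention used for $\Gin$, and assembling these generators over all $u \in U$ yields $\Gin(I_{B_d(U)}) = I(U)$. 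Applying the squarefree operation then gives $I_{\dels{B_d(U)}} = \Phi(\Gin(I_{B_d(U)})) = \Phi(I(U))$. The exterior equality $I_{\dele{B_d(U)}} = \Phi(I(U))$ is obtained by the analogous analysis in the exterior algebra, with the same shelling data controlling the exterior generic initial ideal.

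For part (ii), part (i) gives $U(B_d(U)) = U$ directly, since $\Gin(I_{B_d(U)}) = I(U)$ and $\M_{[m]} \setminus I(U) = U$. The inclusion $S_d(U) \subset B_d(U)$ forces $\Gin(I_{S_d(U)}) \supset \Gin(I_{B_d(U)}) = I(U)$, so $U(S_d(U)) \subset U$. For the reverse inclusion, the hypothesis $\max\{\deg u : u \in U\} \leq d/2$ places all monomials of $U$ in degree at most $d/2$, and equation (\ref{5-0}) guarantees that $I_{S_d(U)}$ and $I_{B_d(U)}$ share identical minimal squarefree generators in all degrees up to $\lfloor d/2 \rfloor + 1$. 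Since a strongly stable generic initial ideal in characteristic zero is determined degree by degree by the original ideal's generators in those degrees, $\Gin(I_{S_d(U)})$ and $\Gin(I_{B_d(U)})$ agree on monomials in $\M_{[m]}$ of degree $\leq d/2$. Consequently no $u \in U$ lies in $\Gin(I_{S_d(U)})$, giving $U \subset U(S_d(U))$.

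The main obstacle is the precise identification $\Gin(I_{B_d(U)}) = I(U)$ in part (i): upgrading from the Hilbert-function coincidence (easy from shellability) to equality of ideals requires tracking the minimal generators facet by facet via the explicit form of $F_d(u)$ and the shelling order on $U$. Additionally, the parallel treatment of exterior and symmetric shifting demands compatible control over both the symmetric $\Gin$ in $S$ and the exterior $\Gin$ in $E$, which is the technical content of the results of \cite{Msq} cited in the statement.
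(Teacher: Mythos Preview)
The paper does not actually prove this lemma: it is stated as a citation of \cite[Proposition~4.1, Theorem~4.2 and Corollary~7.7]{Msq}, so there is no in-paper proof to compare your proposal against. Your outline is a plausible reconstruction of how such a proof proceeds, and you correctly identify the crux of part~(i) as the identification $\Gin(I_{B_d(U)}) = I(U)$, which you defer to \cite{Msq}.

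A few points on your sketch of part~(ii). Your claim that (\ref{5-0}) forces the Stanley--Reisner ideals to share minimal generators ``in all degrees up to $\lfloor d/2 \rfloor + 1$'' is too strong: equality of faces of cardinality $\leq d/2$ only gives $(I_{S_d(U)})_k = (I_{B_d(U)})_k$ for $k \leq \lfloor d/2 \rfloor$, and a set of size $\lfloor d/2 \rfloor + 1$ can perfectly well be a face of $B_d(U)$ but a minimal non-face of $S_d(U)$. Fortunately you only need agreement through degree $\lfloor d/2 \rfloor$, since every $u \in U$ has $\deg u \leq d/2$ by hypothesis, so the conclusion survives. Also, the line ``part~(i) gives $U(B_d(U)) = U$'' is notationally off: $U(\,\cdot\,)$ in the paper is defined for $(d-1)$-dimensional complexes, whereas $B_d(U)$ is $d$-dimensional. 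What you actually use, and what part~(i) does yield (via injectivity of $\Phi$ on strongly stable ideals), is $\Gin(I_{B_d(U)}) = I(U)$, hence $\{u \in \M_{[m]} : u \notin \Gin(I_{B_d(U)})\} = U$; the rest of your containment argument then goes through.
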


It was noted in \cite[\S 5.1]{K} that if one has $\Delta ( S_d(U)) \subset \Delta(n,d)$ for all squeezed spheres $S_d(U)$ on $[n]$
then Lemma \ref{5no9}
yields Theorem \ref{main2} without a proof.
To prove this,
we require the following fact
(see \cite[p.\ 398]{Kdisc} or \cite[Lemma 3.4]{Msq}).

\begin{lemma}[Kalai] \label{5noA}
Assume $\mathrm{char}(K)=0$.
Let $\Gamma$ and $\Sigma$ be $(d-1)$-dimensional simplicial complexes on $[n]$ having the strong Lefschetz property.
Then
\begin{itemize}
\item[(i)] $U(\Gamma)$ is a shifted order ideal of monomials of degree at most $\frac d 2$ on $[n-d-1]$;
\item[(ii)] if $U(\Gamma) = U(\Sigma)$ then
$\dels \Gamma = \dels \Sigma$.
\end{itemize}
\end{lemma}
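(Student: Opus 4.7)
The plan is to prove (i) and (ii) separately, exploiting two structural properties of generic initial ideals in characteristic zero: Borel-fixedness (i.e., strong stability), and the behavior forced by the strong Lefschetz property.

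For (i), I would use that in characteristic zero $\Gin(I_\Gamma)$ is strongly stable, meaning that whenever $u \in \Gin(I_\Gamma)$ with $x_j \mid u$ and $i < j$, one has $(x_i/x_j)u \in \Gin(I_\Gamma)$. The complement of a monomial ideal is automatically closed under divisors, so $U(\Gamma)$ is an order ideal; and the contrapositive of strong stability gives the shifted property, since $u x_i \in U(\Gamma)$ with $i < j \leq n-d-1$ and $u x_j \in \Gin(I_\Gamma)$ would force $u x_i \in \Gin(I_\Gamma)$, a contradiction. For the degree bound, I would apply the strong Lefschetz property: after a generic change of coordinates, $x_n, x_{n-1}, \ldots, x_{n-d+1}$ is an l.s.o.p.\ of $S/\Gin(I_\Gamma)$ and $x_{n-d}$ is a strong Lefschetz element. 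The Artinian double quotient
\[ R = S\big/\!\big(\Gin(I_\Gamma) + (x_{n-d}, x_{n-d+1}, \ldots, x_n)\big) \;\cong\; K[x_1,\ldots,x_{n-d-1}]\big/\bar J', \]
where $\bar J' = \Gin(I_\Gamma) \cap K[x_1,\ldots,x_{n-d-1}]$, has $U(\Gamma)$ as its monomial $K$-basis and, by SLP, Hilbert function equal to the $g$-vector, vanishing in degrees above $\lfloor d/2 \rfloor$. Hence every element of $U(\Gamma)$ has degree at most $\lfloor d/2 \rfloor$.

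For (ii), I would show that $U(\Gamma)$ determines $\Gin(I_\Gamma)$ entirely, so that $\dels \Gamma = \Phi(\Gin(I_\Gamma))$ is determined as well. For a monomial ideal $J$, the variable $x_k$ is a nonzerodivisor on $S/J$ precisely when no minimal generator of $J$ involves $x_k$. Applying this inductively to the regular sequence $x_n, x_{n-1}, \ldots, x_{n-d+1}$ on $S/\Gin(I_\Gamma)$, no minimal generator of $\Gin(I_\Gamma)$ involves any variable beyond $x_{n-d}$. Denote by $\bar J \subset K[x_1,\ldots,x_{n-d}]$ the ideal generated by these minimal generators and by $T = K[x_1,\ldots,x_{n-d}]/\bar J$ the Artinian reduction; then $T/(x_{n-d})T \cong K[x_1,\ldots,x_{n-d-1}]/\bar J'$ has $U(\Gamma)$ as its basis. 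I would then reconstruct $\bar J$ inductively on degree: the monomials in $T_j$ split as $U(\Gamma)_j$ together with monomials of the form $x_{n-d} v$ for certain $v$ representing nonzero classes in $T_{j-1}$, and the strong Lefschetz bijections $x_{n-d}^{d-2i}\colon T_i \to T_{d-i}$ combined with strong stability uniquely pin down which such $x_{n-d} v$ lie outside $\bar J$. Once $\Gin(I_\Gamma)$ is recovered, applying $\Phi$ produces $\dels \Gamma$, and the assumption $U(\Gamma) = U(\Sigma)$ therefore yields $\dels \Gamma = \dels \Sigma$.

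The main obstacle is the uniqueness of this reconstruction in (ii): strong stability alone does not uniquely extend a monomial ideal from $K[x_1,\ldots,x_{n-d-1}]$ to $K[x_1,\ldots,x_{n-d}]$, so the strong Lefschetz condition must be leveraged carefully. The cleanest approach I envision is to use the Lefschetz bijection to trade ``$x_{n-d}$-divisible'' monomials in degree $j$ for appropriate monomials in degree $j-1$, and then use strong stability to propagate the answer from the known $U(\Gamma)$ upward; the counts must then match the expected $h_j$ coming from the symmetry $h_j = h_{d-j}$ forced by SLP, which supplies the required uniqueness.
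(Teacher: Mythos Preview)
The paper does not prove this lemma; it is quoted as a known result of Kalai (with references to \cite[p.~398]{Kdisc} and \cite[Lemma~3.4]{Msq}), and the underlying mechanism appears only later as formula~(\ref{7-B}). So the comparison is between your sketch and that cited argument.

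Your outline for (i) is correct and standard. The one step you assert without justification---that $x_{n-d}$ is a strong Lefschetz element for $T=S/(\Gin(I_\Gamma)+(x_{n-d+1},\dots,x_n))$, not merely for the Artinian reduction of $K[\Gamma]$ by generic linear forms---is precisely where the reverse-lex colon identity $\init(J):x_{n-d}=\init(J:x_{n-d})$ is needed (applied with $J=\varphi(I_\Gamma)+(x_{n-d+1},\dots,x_n)$). You should make that explicit, since the same fact is what drives (ii).

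For (ii) you correctly identify both the target (reconstruct $\Gin(I_\Gamma)$ from $U(\Gamma)$) and the obstacle, but you stop short of closing it. The clean resolution, which is the content of the cited references and of (\ref{7-B}), is the explicit description
\[
L(\Gamma)=\bigl\{\,u\,x_{n-d}^{\,t}:u\in U(\Gamma),\ 0\le t\le d-2\deg u\,\bigr\}
\]
of the monomial basis of $T$. Once you know $x_{n-d}$ is Lefschetz on $T$, the inclusion ``$\supseteq$'' is immediate: for $u\in U(\Gamma)$ with $\deg u=i$ one has $x_{n-d}^{d-2i}u\ne 0$ in $T$, so every intermediate $u\,x_{n-d}^{t}$ with $0\le t\le d-2i$ survives. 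Equality then follows by comparing cardinalities in each degree $j$ against $h_j(\Gamma)$, using $|U(\Gamma)_i|=h_i-h_{i-1}$ from part~(i) and the symmetry $h_j=h_{d-j}$. Note that strong stability is not what pins down the reconstruction here---it is the Lefschetz injectivity on $T$ together with the dimension count; your invocation of strong stability in (ii) is a red herring. With $L(\Gamma)$ determined, $\Gin(I_\Gamma)$ is the extension to $S$ of its complement in $K[x_1,\dots,x_{n-d}]$, and $\dels\Gamma=\Phi(\Gin(I_\Gamma))$ follows.
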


\begin{proposition} \label{5noB}
If $S_d(U)$ is a squeezed $(d-1)$-sphere then $\dele {S_d(U)} = \dels {S_d(U)}$.
\end{proposition}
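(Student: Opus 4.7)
The plan is an induction on $(d,n)$, combining Lemma \ref{5no2} with the Link-condition construction from the proof of Proposition \ref{3no5} and the characterization of symmetric shifting provided by Lemma \ref{5noA}(ii). By Proposition \ref{5no4} and Corollary \ref{4no7}, $S_d(U)$ has the strong Lefschetz property in every characteristic; so in characteristic $0$, Main Theorem \ref{main1} gives that both $\dele{S_d(U)}$ and $\dels{S_d(U)}$ are shifted, pure, contained in $\Delta(n,d)$, with symmetric $h$-vector and the same $f$-vector as $S_d(U)$. The base cases — when $S_d(U)$ is itself shifted, e.g.\ $U=\{1\}$ gives the boundary of a simplex, or small-dimensional cases — follow at once from $(S_2)$.

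For the inductive step, Lemma \ref{5no2} asserts that $S_d(U)$ satisfies the Link condition with respect to $\{1,2\}$, with contraction $S_d(\hat U)$ and link $\tilde S_{d-2}(\tilde U)$, both strictly smaller squeezed spheres. After a harmless relabeling (using that $\dele\Gamma$ is independent of the vertex labeling) placing this pair at $\{1,n\}$ and the link's vertex set at $[m,n-1]$, the construction in the proof of Proposition \ref{3no5} produces a nongeneric automorphism $\tilde\varphi\in\GL$ with
$$
\Delta_{\tilde\varphi}\bigl(S_d(U)\bigr) \;=\; \dele{S_d(\hat U)} \;\cup\; \bigl\{\{1\}\cup F : F\in \{n\}*\dele{\tilde S_{d-2}(\tilde U)}\bigr\}.
$$
By the induction hypothesis $\dele{S_d(\hat U)} = \dels{S_d(\hat U)}$ and $\dele{\tilde S_{d-2}(\tilde U)} = \dels{\tilde S_{d-2}(\tilde U)}$, so the right-hand side can equally be written using the symmetric shifts.

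The decisive step — which goes beyond what Proposition \ref{3no5} directly yields (namely containment in $\Delta(n,d)$) — is to promote this formula to an equality $\Delta_{\tilde\varphi}(S_d(U)) = \dele{S_d(U)}$. I would argue that the complex above is actually shifted on $[n]$: the first summand is shifted on $\{2,\ldots,n\}$, and closure of the union under the shifting operation reduces to the inclusion $\{n\}*\dele{\tilde S_{d-2}(\tilde U)} \subset \dele{S_d(\hat U)}$, which follows from $\{n\}*\tilde S_{d-2}(\tilde U) \subset S_d(\hat U)$ via $(S_4)$ together with Lemma \ref{0no2} — under the chosen labeling, $n$ is the apex of the cone, so $\dele$ commutes with it. A shifted complex has a Borel-fixed exterior face ideal, and $\Gin(J_{S_d(U)})$ is the unique Borel-fixed initial ideal of $J_{S_d(U)}$; this forces $\Delta_{\tilde\varphi}(S_d(U)) = \dele{S_d(U)}$. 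A parallel argument on the symmetric side, using Lemmas \ref{0no1} and \ref{0no2} for $\dels$ and invoking Lemma \ref{5noA}(ii) together with $U(S_d(U))=U$ from Lemma \ref{5no9}(ii), produces the same combinatorial description of $\dels{S_d(U)}$, yielding the desired equality.

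The main obstacle is exactly this shiftedness verification, specifically the inclusion $\{n\}*\dele{\tilde S_{d-2}(\tilde U)} \subset \dele{S_d(\hat U)}$. It uses the squeezed-sphere structure essentially: under the chosen relabeling, the link's vertex set sits at the top of the ordering so that Lemma \ref{0no2} can commute $\dele$ with the cone at the maximal vertex $n$. Without this special positioning one obtains only the containment statement of Proposition \ref{3no5}, not the equality required here.
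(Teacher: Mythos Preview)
Your inductive strategy hits a real obstacle at the ``decisive step''. The assertion that $\Gin(J_{S_d(U)})$ is the \emph{unique} Borel-fixed initial ideal of $J_{S_d(U)}$ is not a valid general principle: an ideal can admit several initial ideals $\init(\varphi(J))$ (as $\varphi$ ranges over $\GL$) that are Borel-fixed, with only the generic one equal to $\Gin(J)$. Property $(S_2)$ tells you that a shifted complex is fixed under $\dele{\mbox{-}}$, not that every $\varphi$ producing a shifted $\Delta_\varphi(\Gamma)$ already computes $\dele\Gamma$. So even granting that $\Delta_{\tilde\varphi}(S_d(U))$ is shifted, the conclusion $\Delta_{\tilde\varphi}(S_d(U))=\dele{S_d(U)}$ is unjustified, and the same objection applies to the ``parallel argument on the symmetric side''. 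There is also a gap one step earlier: shiftedness of the union does \emph{not} reduce to the containment $\{n\}*\dele{\tilde S_{d-2}(\tilde U)}\subset\dele{S_d(\hat U)}$. Replacing the vertex $1$ in a face $\{1\}\cup F$ by some $j>1$ lands in the part not containing $1$, so you would need $F\cup\{j\}\in\dele{S_d(\hat U)}$ for every $F\in\{n\}*\dele{\tilde S_{d-2}(\tilde U)}$ and every $j\in[2,n]\setminus F$, which is strictly stronger than the containment you establish.

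The paper's argument is entirely different and avoids computing either shifted complex inductively. It applies $\dels{\mbox{-}}$ to both $\dele{S_d(U)}$ and $\dels{S_d(U)}$ and invokes $(S_2)$; since both complexes are pure shifted and contained in $\Delta(n,d)$ with symmetric $h$-vector, Lemma~\ref{5no70} gives them the strong Lefschetz property in characteristic $0$, and then Lemma~\ref{5noA}(ii) reduces the problem to the equality $U\bigl(\dele{S_d(U)}\bigr)=U\bigl(\dels{S_d(U)}\bigr)$. These sets consist only of monomials of degree $\le d/2$, and in that range the sphere $S_d(U)$ and the ball $B_d(U)$ have the same faces by~(\ref{5-0}); the explicit formula $\dele{B_d(U)}=\dels{B_d(U)}$ of Lemma~\ref{5no9}(i) then finishes the argument with no induction and no uniqueness claim about initial ideals.
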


\begin{proof}
By $(S_2)$ (defined in \S 4) it is enough to show $\dels {\dele {S_d(U)}} = \dels { \dels {S_d(U)}}$.
Lemma \ref{CohenMacaulay} says that
$\dele {S_d(U)}$ and $\dels {S_d(U)}$ are Cohen--Macaulay.
Since squeezed spheres are strongly edge decomposable, by using $(S_2)$,
Theorem \ref{main1} and Lemma \ref{5no70} say that
$\dele {S_d(U)}$ and $\dels {S_d(U)}$ have the strong Lefschetz property in characteristic $0$.
%Then,
%since $S_d(U)$ also has the strong Lefschetz property by Theorem \ref{4no7},
Then, by Lemma \ref{5noA}, what we must prove is
\begin{eqnarray} \label{7-0}
&& \big\{ u \in \M _{[n-d-1]}:u \not \in  \Gin ( I_{\dele {S_d(U)}} ) \big\}
=\big\{ u \in \M _{[n-d-1]}:u \not \in  \Gin(I_{ \dels {S_d(U)}}) \big\}.
\end{eqnarray}
Here $\Gin(\mbox{-})$ is the generic initial ideal in characteristic $0$.
%Since $\Gin(\Phi(\Gin(I_\Gamma)))= \Gin(I_\Gamma)$ for any simplicial complex by $(S_2)$,
%we have
%\begin{eqnarray} \label{7-1}
%&&
%\{ u \in \M _{[n-d-1]}:u \not \in  \Gin(I_{ {S_d(U)}})\}.
%=\{ u \in \M _{[n-d-1]}:u \not \in  \Gin(I_{\dels {S_d(U)}})\}.
%\end{eqnarray}
Lemma \ref{5noA} also says that
sets of monomials which appear in (\ref{7-0})
are sets of monomials of degree at most $\frac d 2$.
Then (\ref{5-0}) and Lemma \ref{5no9} (i) say
\begin{eqnarray*}
&& \big\{ u \in \M _{[n-d-1]}:u \not \in  \Gin(I_{\dele {S_d(U)}})\big\}\\
&&=\left\{ u \in \M _{[n-d-1]}:u \not \in  \Gin(I_{\dele {B_d(U)}}),\
\deg u \leq  \frac d 2\right\}\\
&&=\left\{ u \in \M _{[n-d-1]}:u \not \in  \Gin(I_{\dels {B_d(U)}}),\
\deg u \leq  \frac d 2 \right\}\\
&&= \big\{ u \in \M _{[n-d-1]}:u \not \in  \Gin(I_{\dels {S_d(U)}}) \big\},
\end{eqnarray*}
as desired.
\end{proof}

Now we will prove Theorem \ref{main2}.

\begin{proof}[Proof of Theorem \ref{main2}]
By Proposition \ref{5noB},
it is enough to show the statement for symmetric algebraic shifting.
Thus we may assume $\mathrm{char}(K)=0$.
Since $\Sigma$ is shifted and pure,
$\Sigma$ is Cohen--Macaulay
by Lemma \ref{CohenMacaulay}.
Also, since $\dels \Sigma = \Sigma$ by $(S_2)$,
the assumption and Lemma \ref{5no70} say that $\Sigma$ has
the strong Lefschetz property.
Then Lemma \ref{5noA} says that
$U(\Sigma)$
%=\{ u \in \M _{[n-d-1]}: u \not \in \Gin(I_\Sigma)\}$
is a shifted order ideal of monomials of degree at most
$\frac d 2$. 
Then Lemma \ref{5no9} (ii) says
$U(S_d( U(\Sigma)))=U(\Sigma)$.
Since $S_d(U(\Sigma))$ has the strong Lefschetz property by Corollary \ref{4no7} and Proposition \ref{5no4}, we have $\dels {S_d(U(\Sigma))} = \dels \Sigma =\Sigma$ by Lemma \ref{5noA}.
\end{proof}

\begin{rem}
In general, exterior algebraic shifting depends on the characteristic of
the base field,
% (see \cite[Theorem 2.1]{K}),
however, Proposition \ref{5noB} says that
the exterior algebraic shifted complex of squeezed spheres
is independent of the characteristic of the base field.
Also, it is possible to compute the facets of
$\dele {S_d(U)} = \dels {S_d(U)}$ from $U$
by using \cite[Theorem 6.4]{Kdisc} and Lemma \ref{5no9} as follows.

For a $(d-1)$-dimensional Cohen--Macaulay complex $\Gamma$ on $[n]$,
let
$$L(\Gamma)= \big\{ u \in \M _{[n-d]}: u \not \in \Gin(I_\Gamma) \big\},$$
where $\Gin(I_\Gamma)$ is the generic initial ideal of $I_\Gamma$ in characteristic $0$.
For a homogeneous ideal $I$ of $S$,
we write $I_{\leq k}$ for the ideal of $S$ generated by all polynomials
in $I$ of degree at most $k$.
It follows from \cite[(6.3)]{Kdisc} that the set of facets of
$\dels {\Gamma}$ is
\begin{eqnarray} \label{7-A}
&&\bigcup_{k=0}^d \big\{ \{i_1,i_2+1,\dots,i_k +k-1\} \cup [n-d+1+k,n]:
x_{i_1}\cdots x_{i_k} \in L(\Gamma) \big\},
\end{eqnarray}
where $i_1 \leq \cdots \leq i_k$.
Moreover, if $\Gamma$ has the strong Lefschetz property then
$U(\Gamma)$ determines $L(\Gamma)$ by the relation
\begin{eqnarray} \label{7-B}
L(\Gamma)= \{ u x_{n-d}^t: u \in U(\Gamma), \ 0 \leq t \leq d-2 \deg u\}.
\end{eqnarray}
(See \cite[p.\ 398]{Kdisc} or \cite[Lemma 3.4]{Msq}.)
On the other hand, by using Lemma \ref{5no9},
it is not hard to show that
$U(S_d(U)) = \{ u \in U: \deg u \leq \frac d 2\}$.
Indeed, if we set $U'= \{ u \in U: \deg u \leq \frac d 2\}$,
then Lemma \ref{5no9} (i) and (\ref{5-0}) say that
$(I_{\dels {S_d(U)}})_{\leq \frac d 2} =(I_{\dels {S_d(U')}})_{\leq \frac d 2}$.
This fact says that $\Gin(I_{S_d(U)})_{\leq \frac d 2} = \Gin (I_{S_d(U')})_{\leq \frac d 2}$,
and hence $U(S_d(U))=U(S_d(U'))=U'$ by Lemma \ref{5no9} (ii).
Then the facets of $\dels {S_d(U)}$ are determined from (\ref{7-A}) and (\ref{7-B}).
\end{rem}

\begin{rem}
Corollary \ref{4no7} and Proposition \ref{5no4} give an affirmative answer to \cite[Problem 4.5]{Msq}.
This yields the characterization of the generic initial ideal of the Stanley--Reisner ideal
of Gorenstein* complexes (see \cite[p.\ 67]{S}) having the strong Lefschetz property in characteristic $0$.
Indeed Theorem \ref{main2} and Lemmas \ref{CohenMacaulay} and \ref{5no70}
characterize the symmetric algebraic shifted complex of those complexes,
and knowing $\dels \Gamma$ is equivalent to knowing $\Gin(I_\Gamma)$ in characteristic $0$.
Also,
by using the relation between generic initial ideals and generic hyperplane sections (\cite[Corollary 2.15]{G}),
this characterization can be extended to the characterization of generic initial ideals of homogeneous ideals $I$ 
which satisfy that
$S/I$ is a Gorenstein homogeneous $K$-algebra having the strong Lefschetz property in characteristic $0$.
\end{rem}

\bigskip

\noindent
\textbf{Acknowledgements.}
I would like to thank Eran Nevo for helpful comments on an earlier version of this paper.
The author is supported by JSPS Research Fellowships for Young Scientists

\end{document}